\newtheorem{theorem}{Theorem}[section]
\newtheorem{definition}[theorem]{Definition}
\newtheorem{lemma}[theorem]{Lemma}
\newtheorem{remark}[theorem]{Remark}
\newtheorem{corollary}[theorem]{Corollary}
\newtheorem{example}[theorem]{Example}
\newtheorem{proposition}[theorem]{Proposition}
\newtheorem{observation}[theorem]{Observation}
\def\ma{\mathcal{A}}
\def\me{\mathcal{E}}
\def\mh{\mathcal{H}}
\def\ml{\mathcal{L}}
\def\mf{\mathcal{P}}
\def\mr{\mathcal{R}}
\def\ms{\mathcal{S}}
\def\mju{\mathcal{U}}
\def\mw{\mathcal{W}}
\def\mx{\mathcal{X}}
\def\my{\mathcal{Y}}
\def\mz{\mathcal{Z}}
\def\we{\mathcal{WE}}
\newcommand{\codim}{{\rm codim}\hskip0.02cm}
\newcommand{\dist}{{\rm dist}\hskip0.02cm}
\newcommand{\lin}{{\rm lin}\hskip0.02cm}
\newcommand{\im}{{\rm im}\hskip0.02cm}
\newcommand{\ep}{\varepsilon}
\newcommand{\tr}{{\rm tr}\hskip0.02cm}
\newcommand{\WOT}{{\rm WOT}}
\newcommand{\SOT}{{\rm SOT}}
\begin{document}
\title{\LARGE{ Weak operator topology,
operator ranges and operator equations via Kolmogorov widths}}

\author{M.\,I.~Ostrovskii\\
Department of Mathematics and Computer Science\\
St. John's University\\
8000 Utopia Parkway\\
Queens, NY 11439\\
USA\\
e-mail: {\tt ostrovsm@stjohns.edu} \and\\
V.\,S.~Shulman\\
Department of Mathematics\\
Vologda State Technical University\\
15 Lenina street\\
Vologda 160000\\
RUSSIA\\
e-mail: {\tt shulman\_v@yahoo.com}}

\date{\today}
\maketitle

\begin{large}

%\tableofcontents

\noindent{\bf Abstract:} Let $K$ be an absolutely convex infinite-dimensional compact in a Banach space $\mx$.
The set of all bounded linear operators $T$ on $\mx$ satisfying $TK\supset K$ is denoted by $G(K)$. Our starting
point is the study of the closure $WG(K)$ of $G(K)$ in the weak operator topology. We prove that $WG(K)$
contains the algebra of all operators leaving $\overline{\lin(K)}$ invariant. More precise results are obtained
in terms of the Kolmogorov $n$-widths of the compact $K$. The obtained results are used in the study of operator
ranges and operator equations.
\bigskip

\noindent{\bf Mathematics Subject Classification:} Primary 47A05;
Secondary 41A46, 47A30, 47A62.
\bigskip

\noindent{\bf Keywords:} {Banach space; bounded linear operator;
Hilbert space; Kolmogorov width; operator equation; operator
range; strong operator topology; weak operator topology}

\section{Introduction}

Let $K$ be a subset in a Banach space $\mx$. We say (with some
abuse of the language) that an operator $D\in \ml(\mx)$ {\it
covers} $K$, if $DK\supset K$. The set of all operators covering
$K$ will be denoted by $G(K)$. It is a semigroup with a unit since
the identity operator is in  $G(K)$. It is easy to check that if
$K$ is compact then $G(K)$ is closed in the norm topology and,
moreover, sequentially closed in the weak operator topology
(\WOT). It is somewhat surprising that for each absolutely convex
infinite dimensional compact $K$ the \WOT-closure of $G(K)$ is
much larger than $G(K)$ itself, and in many cases it coincides
with the algebra $\ml(\mx)$ of all operators on $\mx$. Our aim is
to understand: how much freedom has an operator which is obliged
to cover a given compact? In a simplest form the question is:
``How large is $G(K)$?''. We answer this question describing the
\WOT-closure $WG(K)$ of $G(K)$ as well as its closure in the
ultra-weak topology (for the case of Hilbert spaces). These
results are obtained in Sections 2--3 for the Banach spaces, and
in more detailed form in Section 4 for Hilbert spaces; they are
formulated in terms of Kolmogorov's $n$-widths of $K$.

In Section 5 we consider a more general object:  the set $G(K_1,K_2)$ of all operators $T$ which have the
property $TK_1 \supset K_2$ where $K_1, K_2$ are fixed convex compacts in Hilbert spaces.

In further sections we apply the obtained results for study of some related subjects: operator ranges (Section
7), operator equations of the form $XAY = B$ (Section 8) and operators with the property
$$\|AXx\| \ge \|Ax\|
\textrm{ for all }x\in \mh$$ where $A$ is a given operator on a Hilbert space $\mh$. Some applications of the
obtained results to the theory of quadratic operator inequalities and operator fractional linear relations will
be presented in a subsequent work. In fact our interest to the semigroups $G(K)$ was initially motivated by
these applications; the relations to other topics became clear for us in the process of the study.
\medskip

\noindent{\bf Notation.} Our terminology and notation of Banach
space theory follows \cite{JL01}. Our definitions of the standard
topologies on spaces of operators follow \cite[Chapter VI]{DS58}.
Let $\mx ,\my $ be Banach spaces. We denote the closed unit ball
of a Banach space $\my $ by $B_\my $, and the norm closure of a
set $M\subset \my $ by $\overline{M}$. We denote the set of
bounded linear operators from $\my $ to $\mx $ by $\ml(\my ,\mx
)$. We write $\ml(\mx)$ for $\ml(\mx,\mx)$. The identity operator
in $\ml(\mx)$ will be denoted by $I$.
\medskip

Throughout the paper we denote by $\lin(K)$ the linear span of a
set $K$, and by $V_K$ the closed subspace spanned by  $K$, that
is, $V_K = \overline{\lin(K)}$\label{Page:V_K}. We denote by
$\mathcal{A}_K$ the algebra of all operators for which $V_K$ is an
invariant subspace. It is clear that $\mathcal{A}_K$ is closed in
the \WOT.
\medskip

\noindent{\bf Remark on related work.} Coverings of compacts by
sets of the form $R(B_{\mz})$ where $\mz$ is a Banach space and
$R\in\ml(\mz,\mx)$ have been studied by many authors, see
\cite{CJ96}, \cite{COS95}, and \cite{FJPS06}. However, the main
foci of these papers are different. In all of the mentioned papers
additional conditions are imposed on $\mz$, or on $R$, or on both
of them, and the main problem is: whether such $R$ exist? In the
context of the present paper existence is immediate, while for us
(as it was mentioned above) the main question is: ``How large is
the set of such operators?''.
\medskip

\noindent{\bf Acknowledgement.} The authors would like to thank
Heydar Radjavi for a helpful discussion and his interest in our
work.
\medskip

We finish the introduction by showing that for non-convex compacts
$K$ the semigroup $G(K)$ can be trivial:

\begin{example}\label{nonconv} There exists a compact $K$ in an infinite dimensional separable Hilbert space $\mh$
such that the only element of $G(K)$ is the identity operator.
\end{example}

\begin{proof} Let $\{e_n\}_{n=1}^\infty$ be an orthonormal basis
in $\mh$; $\{\alpha_n\}_{n=1}^\infty$ be a sequence of real
numbers satisfying $\alpha_n>0$ and
$\lim_{n\to\infty}\alpha_{n+1}/\alpha_n=0$; and
$\{\beta_n\}_{n=1}^\infty$ be a sequence of distinct numbers in
the open interval $\left(\frac12,1\right)$. The compact $K$ is
defined by
\[K=\{0\}\cup\left\{\alpha_n e_n\right\}_{n=1}^\infty\cup \left\{\alpha_n\beta_ne_n\right\}_{n=1}^\infty.\]

Assume that there exists $D\in\ml(\mh)$ such that $D(K)\supset K$
and $D$ is not the identity operator. Let
$M=\{n\in\mathbb{N}:~De_n\ne e_n\}$. Since $\{e_n\}_{n=1}^\infty$
is a basis in $\mh$ and $D$ is not the identity operator, the set
$M$ is nonempty. We introduce the following oriented graph with
the vertex set $M$. There is an oriented edge
$\overrightarrow{nm}$ starting at $n\in M$ and ending at $m\in M$
($n$ can be equal to $m$) if and only if one of the following
equalities holds:
\begin{equation}\label{E:coverpt} \begin{split} &D(\alpha_me_m)=\alpha_ne_n,\;
D(\beta_m\alpha_me_m)=\alpha_ne_n,\\
&D(\alpha_me_m)=\beta_n\alpha_ne_n,\;
D(\beta_m\alpha_me_m)=\beta_n\alpha_ne_n.\end{split}\end{equation}

\noindent{\bf Important observation.} Since the numbers
$\{\beta_n\}_{n=1}^\infty\subset\left(\frac12,1\right)$ are
distinct, the number of edges starting at $n$ is at least $2$ for
each $n\in M$, while there is at most one edge ending at $m\in M$.
\medskip

An immediate consequence of this observation is that there are
infinitely many oriented edges $\overrightarrow{nm}$ with $n<m$,
that is, infinitely many pairs $(n,m)$, $n<m$, for which one of
equalities from \eqref{E:coverpt} holds. Taking into account the
conditions satisfied by $\{\alpha_n\}$ and $\{\beta_n\}$, we get a
contradiction with the boundedness of $D$.
\end{proof}

This example shows that in the general case there is a very strong dependence of the size of the semigroup
$G(K)$ on the geometry of $K$. To relax this dependence we restrict our attention to absolutely convex compacts
$K$.

\section{$\mathcal{A}_K\subset WG(K)$}

\begin{theorem}\label{vklyuch} Let $K$ be an absolutely convex infinite-dimensional compact.
Then $\mathcal{A}_K\subset WG(K)$.
\end{theorem}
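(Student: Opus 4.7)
Fix $A \in \mathcal{A}_K$ and a basic WOT-neighborhood
\[
\mathcal{N} = \{T \in \ml(\mx) : |f_j(Tx_i - Ax_i)| < \varepsilon,\ 1 \le i \le n,\ 1 \le j \le m\}
\]
of $A$. The plan is to produce $T \in G(K) \cap \mathcal{N}$ explicitly. Because $A \in \mathcal{A}_K$, the restriction $\widetilde{A} := A|_{V_K} : V_K \to V_K$ is well defined, and the covering condition $TK \supset K$ concerns only how $T$ acts on (or around) $V_K$. My strategy is thus to first construct an operator $\widetilde{T} : V_K \to V_K$ that approximates $\widetilde{A}$ on a prescribed finite-dimensional subspace while satisfying $\widetilde{T} K \supset K$, and then to extend $\widetilde{T}$ to an operator on $\mx$ reproducing $Ax_i$ exactly for each $i$.

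Let $E_0 := V_K \cap \lin\{x_1,\dots,x_n\}$, a finite-dimensional subspace of $V_K$. Choose a closed complement $F_0$ of $E_0$ inside $V_K$, with projections $P_0$ onto $E_0$ and $Q_0 = I_{V_K} - P_0$ onto $F_0$, and seek $\widetilde{T}$ of the form $\widetilde{T} = \widetilde{A} P_0 + \widetilde{S} Q_0$ for some bounded $\widetilde{S} : F_0 \to V_K$ to be chosen. This guarantees $\widetilde{T}|_{E_0} = \widetilde{A}|_{E_0}$ (needed for the extension step), and, restricting attention to $k' \in K \cap F_0$ in the covering problem, reduces the task $\widetilde{T} K \supset K$ to the construction of a bounded $\widetilde{S}$ with $\widetilde{S}(K \cap F_0) \supset K$.

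To build $\widetilde{S}$, I first observe that $K \cap F_0$ is itself an absolutely convex, infinite-dimensional compact: because $F_0$ has finite codimension in $V_K$, a quotient argument applied to any sufficiently long linearly independent sequence in $K$ produces, via the kernel of the map into the finite-dimensional quotient, non-trivial absolutely convex combinations lying in $K \cap F_0$, and iteration yields arbitrarily large linearly independent families. Using that $K$ is the closed absolutely convex hull of a null sequence $(v_\ell) \subset K$ whose linear span is dense in $V_K$, one selects a comparable null sequence $(w_\ell) \subset K \cap F_0$ and defines $\widetilde{S}$ on $F_0$ by the assignment $\widetilde{S} w_\ell = v_\ell$, extended by linearity and continuity. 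Finally, extend $\widetilde{T}$ from $V_K$ to $T : \mx \to \mx$ with $Tx_i = Ax_i$: compatibility on $V_K + \lin\{x_1,\dots,x_n\}$ is assured by $\widetilde{T}|_{E_0} = A|_{E_0}$, and a bounded extension (available because $\lin\{x_1,\dots,x_n\}$ modulo $E_0$ is finite-dimensional) yields $T \in \mathcal{N}$ with $TK \supset \widetilde{S}(K \cap F_0) \supset K$.

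The main obstacle is the construction of a \emph{bounded} $\widetilde{S} : F_0 \to V_K$ with $\widetilde{S}(K \cap F_0) \supset K$. Although $K \cap F_0$ is infinite-dimensional, one must match elements of $(w_\ell)$ with those of $(v_\ell)$ while keeping the ratios $\|v_\ell\|/\|w_\ell\|$ uniformly bounded. The resolution hinges on a Kolmogorov-width-type comparison: because $F_0$ has finite codimension in $V_K$, the $n$-widths of $K \cap F_0$ and of $K$ differ only by a bounded shift in the index, which is exactly the quantitative control needed. This quantitative comparison of widths is the essential ingredient and naturally foreshadows the systematic role of Kolmogorov widths in the subsequent sections of the paper.
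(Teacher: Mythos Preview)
There is a genuine gap at the heart of your construction: the bounded operator $\widetilde{S}:F_0\to V_K$ with $\widetilde{S}(K\cap F_0)\supset K$ that you seek need not exist. Take the simplest Hilbert-space ellipsoid $K=\{\sum\alpha_\ell a_\ell e_\ell:\sum|\alpha_\ell|^2\le 1\}$ with $a_1>a_2>\cdots\to 0$, let $E_0=\mathrm{span}(e_1)$, and let $F_0$ be any closed complement of $E_0$ in $V_K$. The paper's own Lemma~\ref{L:codim1} gives $d_n(K\cap F_0)\le C\,d_{n+1}(K)$ for every such complement, while Lemma~\ref{L:cover} forces $d_n(K)\le\|\widetilde{S}\|\,d_n(K\cap F_0)$ whenever $\widetilde{S}(K\cap F_0)\supset K$. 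Combining these yields $d_n(K)\le C\|\widetilde{S}\|\,d_{n+1}(K)$ for all $n$, which is impossible when $\{d_n(K)\}$ is lacunary. So the width comparison you invoke (``differ only by a bounded shift in the index'') is precisely the \emph{obstruction}, not the resolution: it shows that for lacunary $K$ no such $\widetilde{S}$ can be bounded, regardless of how cleverly you pick the null sequences $(v_\ell),(w_\ell)$ or the complement $F_0$.

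The paper's proof (Lemma~\ref{konechn}) avoids this trap by never attempting to cover $K$ from a fixed proper section. Instead it takes a \emph{finite}-rank projection $Q_M$ adapted to the data, observes that the co-finite-rank piece $(I-Q_M)K$ lies in $\lin K=\bigcup_n nK$ and hence in $\tfrac{\lambda}{2}K$ for some large scalar $\lambda$, and then sets $D=\lambda I$ on $\ker P$. The infinite-dimensional part of $K$ is thus covered by a dilation of $K$ itself, not by the image of a section; the prescribed action on the finite-dimensional subspace $\my$ is arranged separately on a finite-dimensional block where one can take arbitrarily large norms. The resulting $D$ may have enormous norm (depending on the WOT-neighborhood), which is perfectly acceptable for WOT-density but is exactly what your single-$\widetilde{S}$ scheme cannot accommodate in the lacunary regime.
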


\begin{remark} Theorem {\rm \ref{vklyuch}} is no longer true for
finite dimensional compacts. In fact, if $K$ is absolutely convex
finite-dimensional compact, then $A\in G(K)$ implies that $A$
leaves $V_K$ invariant. Since $V_K$ is finite dimensional, the
condition $AK\supset K$ passes to operators from the \WOT-closure.
Thus $WG(K)$ is a proper subset of $\ma_K$. \end{remark}

Let $F$ be a subset of $\mx^*$. We use the notation $F_{\bot}$ for
the {\it pre-annihilator} of $F$, that is, $F_{\bot}:=\{x\in \mx
:~\forall f\in F~ f(x)=0\}$.

\begin{lemma}\label{konechn}
Let $K$ be an absolutely convex infinite-dimensional compact in a
Banach space $\mx$. For each finite dimensional subspace $F\subset
\mx ^*$, each finite dimensional subspace $\my\subset\mx$, and an
arbitrary linear mapping $N:\my\to\mx$ satisfying $N(\my\cap
V_K)\subset \lin(K)$, there is $D\in G(K)$ satisfying the
condition
\begin{equation}\label{E:annihilator} Dx - Nx \in F_{\bot} ~~~\forall x\in \my .\end{equation}
\end{lemma}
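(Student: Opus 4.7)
The plan is to construct $D\in G(K)$ explicitly as $D=aI+B$, where $B$ is a finite-rank perturbation that installs the required action of $N$ on $\my$ modulo $F_\perp$, and the scalar $a\geq 1$ is taken large enough that a Neumann-series argument yields invertibility of $D|_{V_K}$ together with $D^{-1}K\subset K$, which is equivalent to $DK\supset K$.

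First I would split $\my=\my_0\oplus\my_1$ with $\my_0:=\my\cap V_K$ and $\my_1$ any algebraic complement; since $V_K$ is closed and $\dim\my_1<\infty$, Hahn-Banach produces functionals $g_1,\ldots,g_k\in\mx^*$ biorthogonal to a basis $y_1,\ldots,y_k$ of $\my$ (with $y_1,\ldots,y_j$ spanning $\my_0$), arranged so that $g_l|_{V_K}=0$ for every $l>j$, which makes the $\my_1$-part of the construction invisible to $K$. Next, for the $\my_0$-part I use the hypothesis $N(\my_0)\subset\lin(K)$ together with the identity $\lin(K)+(F_\perp\cap V_K)=V_K$ (which follows because $\lin(K)$ is dense in $V_K$ and the quotient $V_K/(F_\perp\cap V_K)$ is finite-dimensional) to pick representatives $y_l'\in\lin(K)$ with $y_l-y_l'\in F_\perp$ for $l\leq j$. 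The infinite-dimensionality of $F_\perp\cap\lin(K)$ gives enough freedom to take the $y_l'$ linearly independent. Setting $b_l:=Ny_l-ay_l'$ for $l\leq j$, $b_l:=Ny_l-ay_l$ for $l>j$, $B:=\sum_l b_l\otimes g_l$ and $D:=aI+B$, one checks directly that $Dy_l-Ny_l$ equals $a(y_l-y_l')\in F_\perp$ for $l\leq j$ and $0$ for $l>j$, so the congruence condition is satisfied.

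The main obstacle is then verifying $DK\supset K$. Since $g_l|_{V_K}=0$ for $l>j$ and $b_l\in\lin(K)$ for $l\leq j$, the restriction $B|_{V_K}$ has range in $\lin(K)$ and a finite Minkowski-operator norm. The plan is to choose the representatives $y_l'$ and the functionals $g_l|_{V_K}$ (both of which have infinite-dimensional freedom) so as to keep $\|B|_{V_K}\|_K$ small enough that $a$ can be taken with $a>1+\|B|_{V_K}\|_K$; a standard Neumann-series inversion then gives $\|(D|_{V_K})^{-1}\|_K\leq 1/(a-\|B|_{V_K}\|_K)\leq 1$, hence $D^{-1}K\subset K$ and so $DK\supset K$. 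The crucial point is the interplay between the compatibility hypothesis $N(\my\cap V_K)\subset\lin(K)$, which keeps the perturbation inside $\lin(K)$ so that the Minkowski-norm analysis is available, and the infinite-dimensionality of $K$, which supplies the room to meet the congruence and covering constraints simultaneously.
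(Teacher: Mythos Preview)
Your Neumann-series step has a genuine gap. The operator $B|_{\lin(K)}$ depends on $a$: writing $B|_{\lin(K)}=-aP+C$ with $P=\sum_{l\le j}(g_l|_{V_K})\otimes y_l'$ and $C=\sum_{l\le j}(g_l|_{V_K})\otimes Ny_l$, the bound $a-\|B|_{\lin(K)}\|_K\ge 1$ forces, for large $a$, that $\|P\|_K<1$. But this is not attainable in general. Take $K$ the ellipsoid in $\ell_2$ with axes $s_n\downarrow 0$, $\my=\my_0=\mathrm{span}(e_1)$, $F=\mathrm{span}(e_1^*)$, and $Ne_1=e_2$. The constraint $g_1(e_1)=1$ gives $\|g_1\|_{K^*}\ge s_1$, while $y_1'-e_1\in F_\perp=e_1^\perp$ forces $(y_1')_1=1$ and hence $\|y_1'\|_K\ge 1/s_1$; since these constraints are independent, $\|P\|_K=\|g_1\|_{K^*}\|y_1'\|_K\ge 1$ for \emph{every} admissible choice. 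A direct computation then shows $a-\|B\|_K=a-\sqrt{a^2+s_1^2/s_2^2}<0$ at the optimal choice $g_1=e_1^*$, $y_1'=e_1$, and in fact $D=aI+B$ maps $\lin(K)$ into $e_1^\perp$ (so $DK\not\supset K$). The ``infinite-dimensional freedom'' you invoke does not help here: any perturbation of $g_1$ or $y_1'$ only increases $\|P\|_K$.

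The difficulty is intrinsic to the ansatz $D=aI+(\text{finite rank})$ with a norm-perturbative inversion: whenever $\my_0\not\subset F_\perp$, the coupling between the congruence condition $Dy_l-Ny_l\in F_\perp$ and the biorthogonality $g_l(y_l)=1$ forces the finite-rank part to be of size comparable to $a$ in the Minkowski norm. The paper's construction avoids this by \emph{not} trying to make the finite-rank correction small. Instead it chooses a finite-rank projection $P$ with $(I-P)\mx\subset F_\perp$, sets $D=\lambda I$ on $\ker P$, and on a carefully chosen finite-dimensional complement defines $D$ as a \emph{large dilation} mapping $K\cap L$ onto a set containing $2Q_M(K)$; the covering $DK\supset K$ is then verified by a direct decomposition $x=Q_Mx+(I-Q_M)x$ rather than by inverting $D$. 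You will need a mechanism of this kind---a controlled blow-up on a finite-dimensional piece---rather than a contraction-mapping argument.
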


\begin{proof} Note first of all that it suffices to prove the lemma under an additional assumption that $\my
\subset V_K$. Indeed, suppose that it is done, then in the general
case we choose a complement $\my_1$ of $\my\cap V_K$ in $\my$ and
choose a complement $\mx_1$ of $\my_1$ in $\mx$ that contains
$V_K$. By our assumption there is an operator $D\in \ml(\mx_1)$
with $DK\supset K$ and $Dx - Nx \in F_{\bot}\cap \mx_1 ~~\forall
x\in \my\cap V_K$. It remains to extend $D$ to $\mx$ setting $Dx =
Nx$ for $x\in \my_1$.

So we assume that $\my \subset  V_K$. For brevity denote $\lin(K)$
by $\mz$.

 Let $P:\mx \to \mx $ be a projection of
finite rank, such that $P\mx\supset \my + N\my$, $(I-P)\mx \subset
F_{\bot}$ and $\dim(P\mz)\ge \dim F+\dim \my $. The last condition
can be satisfied since $K$ is finite-dimensional.
\medskip

The conditions $N(\my\cap V_K)\subset\lin(K)$ and $\my\subset V_K$
imply that the subspace $N\my $ is contained in $\mz \cap P\mx $.
The space $\ker (P)\cap \mz$ has finite codimension in $\mz $.
Therefore there exists a complement $L$ of $\ker (P) \cap \mz$ in
$\mz $ such that $L\supset N\my$.
\medskip

We have $PL=P\mz$ and $L\cap (I-P)\mx =\{0\}$. Since the subspace
$(I-P)\mx$ has finite codimension in $\mx $, we can find a
subspace $M\supset L$, which is a complement of $(I-P)\mx$ in $\mx
$. Let $Q_M:\mx \to M$ be the projection onto $M$ with the kernel
$\ker(P)=(I-P)\mx$, and let $M_0$ be the complement of $L$ in $M$.
Since $\mz=(\ker P\cap \mz)\oplus L$ and $Q_M(L)=L$, we have
$Q_M(\mz)=L\subset\mz$.
\medskip

To introduce an operator $D\in\ml(\mx )$ it suffices to determine
its action on $\ker P$ and on $M$. We do it in the following way:
\medskip

(a) The restriction of $D$ to $\ker P$ is a multiple $\lambda
I_{\ker P}$ of the identity operator, where $\lambda$ is chosen in
such a way that $(I-Q_M)K \subset \frac{\lambda}{2}K$ (such a
choice is possible because $(I-Q_M)K$ is a compact subset of $\mz
= \cup_{n\in \mathbb{N}}n K$).
\medskip

(b) The restriction of $D$ to $M$ is defined in three `pieces':

\begin{itemize}

\item $D|_{M_0}=0$.

\item Now we define the restriction of $D$ to $Q_M(\my)$.
Observe that $Q_M(\my)\subset L$. This follows from
\[\my \subset V_K\subset L\oplus (I-P)\mx.\]
In addition $Q_M|_{\my }$ is an isomorphism, because $\my\subset
P\mx $, and $P\mx $ and $M$ are complements of the same subspace.
Because of this, the operator $D|_{Q_M(\my)}$ given by
\[D(Q_M(x))=Nx+\alpha S(Q_M(x))\hbox{ for }x\in \my\]
is well-defined, where $\alpha\in\mathbb{R}$ and $S$ is an
isomorphism of $Q_M(\my)$ into $F_\bot\cap L$. Such isomorphisms
exist because $\dim L\ge \dim(PL)=\dim(P\mz)$, and we assumed that
$\dim(P\mz)\ge\dim F+\dim\my$. Now we choose $\alpha$ to be so
large that the image of $K\cap Q_M(\my)$ covers a `large' multiple
of the intersection of $Q_M(K)$ with the space onto which it maps.
This is possible because zero has non-empty interior in $K\cap
Q_M(\my)$ and $Q_M(K)$ is compact.

\item We define $D$ on the complement of $Q_M(\my)$ in $L$ as
a `dilation' operator onto some complement of the $D(Q_M(\my))$ in $L$. The number $\alpha$ and the dilation are
selected in such a way that
\begin{equation}\label{E:iiLemma} D(K\cap L)\supset 2Q_M(K ).
\end{equation}
\end{itemize}
To see that it is possible recall that $Q_M(K)\subset L$ and,
since $L$ is finite-dimensional, the set $K\cap L$ contains a
multiple of the unit ball of $L$.
\medskip

It remains to verify that $D$ satisfies the conditions
\eqref{E:annihilator} and $DK\supset K$.
\medskip

Condition \eqref{E:annihilator}. Let $x\in \my $, then
$x=Q_Mx+(I-Q_M)x$. Therefore
\[Dx=Nx+\alpha S(Q_M(x))+\lambda (I-Q_M)x.\]
Let $f\in F$. We get:
\[f(Dx)=f(Nx)+\alpha f(S(Q_M(x)))+\lambda
f(I-Q_M)x=f(Nx),\] where we use the following facts: (a) The image
of $S$ is in $F_\bot$; (b) $(I-Q_M)\mx=(I-P)\mx\subset F_{\bot}$.
\medskip

Condition $DK\supset K$. Let $x\in K$. Then $x =Q_Mx+(I-Q_M)x$.
The condition (\ref{E:iiLemma}) implies that there exists
$v\in\frac12(L\cap K)$ such that $Dv=Q_Mx$. The choice of
$\lambda$ implies that $w={\frac1\lambda(I-Q_M)x}$ satisfies
$w\in\frac{1}{2} K $. Let $z=v+w$. It is clear that $z\in K$. We
need to show that $Dz=x$. We have
\[\begin{split}Dz&=Dv+Dw=Dv+D \left(\frac1\lambda(I-Q_M)x\right)\\
&=Q_Mx+\lambda \left(\frac1\lambda(I-Q_M)x\right)=x.\end{split}\]
(We use the fact that $(I-Q_M)\mx \subset\ker P$.)
\end{proof}

\begin{proof}[Proof of Theorem \ref{vklyuch}] Let $T\in\ma_K$, and $$\mju=\{E\in\ml(\mx):~
\forall i\in\{1,\dots, n\}~ |f_i(Ex_i)|<\ep\}$$ be a \WOT-neighborhood of $0$ in $\ml(\mx)$, where
$n\in\mathbb{N}$, $\ep>0$, $\{f_i\}_{i=1}^n\in \mx^*$ and $\{x_i\}_{i=1}^n\in\mx$. We need to show that $T+\mju$
contains an operator from $G(K)$ for each choice of $n,\ep, f_i,$ and $x_i$. Let $F=\lin(\{f_i\}_{i=1}^n)$ and
$\my=\lin(\{x_i\}_{i=1}^n)$. Let $\my_1=\my\cap V_K$. Since $T\in\ma_K$, we have $T(\my_1)\subset V_K$. Since
$V_K=\overline{\mz}$, we can find a ``slight perturbation'' $\widetilde T$ of $T$ satisfying $\widetilde
T(\my_1)\subset \mz$. In particular, we can find such $\widetilde T$ in $T+\frac12\mju$. It remains to show that
$\widetilde T+\frac12\mju$ contains an operator $D$ from $G(K)$. \medskip

It is clear that each operator $S$ satisfying \[\forall
x\in\my\quad Sx-\widetilde Tx\in F_\bot\] is in $\widetilde
T+\frac12\mju$. Now the existence of the desired operator $D$ is
an immediate consequence of Lemma \ref{konechn} applied to
$N=\widetilde T$.
\end{proof}

\begin{corollary}\label{sled} If $V_K = \mx$, then $WG(K) = \ml(\mx )$.
\end{corollary}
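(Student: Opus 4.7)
The proof plan is almost immediate from Theorem \ref{vklyuch}. The key observation is that the hypothesis $V_K = \mx$ forces $\ma_K$ to be as large as possible: since $\ma_K$ is defined as the algebra of operators for which $V_K$ is an invariant subspace, and every operator leaves the whole space $\mx$ invariant, we have $\ma_K = \ml(\mx)$ whenever $V_K = \mx$.

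Given this, I would simply chain the two inclusions. Theorem \ref{vklyuch} gives $\ma_K \subset WG(K)$ (noting that $K$ is absolutely convex and infinite-dimensional by assumption, the latter being automatic because $V_K = \mx$ and $\mx$ is the ambient Banach space, which in the setting of interest is infinite-dimensional). Combined with $\ma_K = \ml(\mx)$, this yields $\ml(\mx) \subset WG(K)$. The reverse inclusion $WG(K) \subset \ml(\mx)$ is trivial since $G(K) \subset \ml(\mx)$ and $\ml(\mx)$ is closed in the \WOT.

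There is no real obstacle here; the corollary is a direct specialization of Theorem \ref{vklyuch} to the case where the invariant subspace condition is vacuous. The only conceptual point worth mentioning is that $V_K = \mx$ implicitly forces $\mx$ to be infinite-dimensional (as required by Theorem \ref{vklyuch}), because $K$ is assumed throughout to be an absolutely convex infinite-dimensional compact, so $V_K = \overline{\lin(K)}$ is infinite-dimensional, hence so is $\mx$.
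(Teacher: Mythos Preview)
Your proof is correct and matches the paper's approach exactly: the corollary is stated without proof because it is immediate from Theorem~\ref{vklyuch} once one observes that $V_K=\mx$ forces $\ma_K=\ml(\mx)$. Your additional remark about infinite-dimensionality is a reasonable sanity check but not something the paper spells out.
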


\section{Application of Kolmogorov $n$-widths to estimates of the `size' of $WG(K)$ from above}

We are going to use the notion of Kolmogorov $n$-width. In this
respect we follow the terminology and notation of the book
\cite[Chapter II]{Pin85}. Let $\mz $ be a subset of a Banach space
$\mx $ and $x\in \mx $. The {\it distance from $x$ to $\mz $} is
defined as
\[E(x,\mz )=\inf\{||x-z||:~z\in \mz \}.\]

\begin{definition} {\rm Let $K$ be a subset of a Banach space $\mx $, $n\in\mathbb{N}\cup\{0\}$.
The {\it Kolmogorov $n$-width} of $K$ is given by
\[d_n(K)=\inf_{\mx _n}\sup_{x\in K}E(x,\mx _n),\] where the infimum is
over all $n$-dimensional subspaces. If
\[d_n(K)=\sup_{x\in K}E(x,\mz )\]
and $\mz \subset \mx $ is an $n$-dimensional subspace, then $\mz $
is said to be an {\it optimal} subspace for $d_n(K)$.}
\end{definition}

\begin{lemma}\label{L:cover} Let $K$ and $K_0$ be two subsets in a Banach space $\mx $ and $D\in \ml(\mx )$ be such that
$D(K_0)\supset K$. Then $d_n(K)\le||D||d_n(K_0)$ for all $n\in\mathbb{N}\cup\{0\}$.
\end{lemma}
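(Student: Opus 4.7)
The plan is straightforward: push an optimal approximating subspace for $K_0$ through $D$ and use the linearity and boundedness of $D$ to control the approximation of $K$.

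First I would fix an arbitrary $n$-dimensional subspace $\mx_n\subset\mx$. The subspace $D(\mx_n)$ has dimension at most $n$, so it is contained in some $n$-dimensional subspace $\mx_n'$ (possibly equal to $D(\mx_n)$). Then I would estimate $\sup_{x\in K}E(x,\mx_n')$ as follows. Given $x\in K$, by hypothesis there exists $x_0\in K_0$ with $Dx_0=x$. For every $z\in\mx_n$ the element $Dz$ lies in $\mx_n'$, hence
\[
E(x,\mx_n')\le\|x-Dz\|=\|D(x_0-z)\|\le\|D\|\,\|x_0-z\|.
\]
Taking the infimum over $z\in\mx_n$ gives $E(x,\mx_n')\le\|D\|\,E(x_0,\mx_n)\le\|D\|\sup_{y\in K_0}E(y,\mx_n)$.

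Taking the supremum over $x\in K$ and then the infimum over all $n$-dimensional $\mx_n$ yields
\[
d_n(K)\le\sup_{x\in K}E(x,\mx_n')\le\|D\|\sup_{y\in K_0}E(y,\mx_n),
\]
and passing to the infimum on the right-hand side produces $d_n(K)\le\|D\|\,d_n(K_0)$.

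There is essentially no obstacle; the only minor subtlety is that $D(\mx_n)$ may have dimension strictly less than $n$, which is handled by embedding it into an $n$-dimensional subspace (this can only decrease the supremum of the distance), and the choice of preimage $x_0$ of $x$ under $D$ which exists by the covering hypothesis $D(K_0)\supset K$.
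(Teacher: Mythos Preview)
Your proof is correct and follows exactly the same approach as the paper's: push an $n$-dimensional subspace through $D$ and use $E(Dx,D\mx_n)\le\|D\|E(x,\mx_n)$. The paper's version is terser (it simply notes that $D\mz$ has dimension $\le n$ and that $E(Dx,D\mz)\le\|D\|E(x,\mz)$, then says ``the conclusion follows''), while you spell out the supremum--infimum bookkeeping and the embedding of $D(\mx_n)$ into an $n$-dimensional space, but there is no substantive difference.
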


\begin{proof}  Let $\mz \subset \mx $ be an $n$-dimensional
subspace. Then $D\mz \subset \mx $ is a subspace of dimension $\le
n$ and $E(Dx,D\mz )\le ||D|| E(x,\mz )$. The conclusion follows.
\end{proof}

\begin{lemma}\label{L:codim1} Let $K$ be a bounded subset in a Banach space $\mx$. If $K_0=K\cap L$, where $L$ is a
closed linear subspace in $\mx$ which does not contain $K$, then
there exists a constant $0<C<\infty$ such that $d_n(K_0)\le
Cd_{n+1}(K)$ for all $n\in\mathbb{N}\cup\{0\}$.
\end{lemma}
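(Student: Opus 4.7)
The plan is to build an $n$-dimensional approximating subspace for $K_0$ inside a near-optimal $(n+1)$-dimensional subspace for $K$, extracting the right codimension-one hyperplane via Hahn--Banach. Set $R := \sup_{x\in K}\|x\|<\infty$ and $d := \sup_{x\in K}\dist(x,L)$; then $d>0$ since $L$ is closed and $K\not\subset L$, and $d \le R$. The constant $C$ will depend only on $R$ and $d$. If $d_{n+1}(K)\ge d/4$, the trivial estimate $d_n(K_0)\le R$ already gives $d_n(K_0)\le (4R/d)\,d_{n+1}(K)$, so the substantive case is $d_{n+1}(K)<d/4$.

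In that case, fix $\varepsilon>0$ with $d_{n+1}(K)+\varepsilon<d/4$ and an $(n+1)$-dimensional $\mx_{n+1}$ satisfying $\sup_{x\in K}E(x,\mx_{n+1})<d_{n+1}(K)+\varepsilon$. Pick $x_0\in K$ with $\dist(x_0,L)\ge d/2$, and let $y_0\in \mx_{n+1}$ approximate $x_0$. The triangle inequality produces the $n$-uniform bounds $\dist(y_0,L)>d/4$ and $\|y_0\|\le R+d/4$. Invoking Hahn--Banach, choose $f\in \mx^*$ with $\|f\|=1$, $f|_L\equiv 0$, and $f(y_0)=\dist(y_0,L)$. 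The candidate approximating subspace is
$$\ms_n := \mx_{n+1}\cap \ker f,$$
of dimension exactly $n$ because $f(y_0)\neq 0$.

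Given $x\in K_0$, approximate it by $y\in \mx_{n+1}$ and correct to $z := y-(f(y)/f(y_0))\,y_0\in \ms_n$. Since $x\in L\subset \ker f$, one has $|f(y)|=|f(y-x)|\le \|y-x\|<d_{n+1}(K)+\varepsilon$, so $|f(y)/f(y_0)|$ is bounded by a fixed multiple of $(d_{n+1}(K)+\varepsilon)/d$. The triangle inequality $\|x-z\|\le \|x-y\|+|f(y)/f(y_0)|\cdot\|y_0\|$ then yields $\|x-z\|\le C'(d_{n+1}(K)+\varepsilon)$ with $C'$ depending only on $R/d$; letting $\varepsilon\to 0$ completes the proof.

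The main obstacle is keeping $C$ independent of $n$. A naive attempt to set $\ms_n:=\mx_{n+1}\cap L$ and decompose $\mx_{n+1}$ along $\mathbb{R}y_0$ fails, because that intersection can have any dimension between $0$ and $n$, and the norm of a projection along a chosen complement is not controllable in terms of $K$ and $L$ alone. The Hahn--Banach functional $f$ sidesteps this by supplying a canonical codimension-one hyperplane $\ker f \supset L$ inside $\mx_{n+1}$, turning the correction into a linear step whose size is controlled purely by the fixed quantities $R$ and $d$.
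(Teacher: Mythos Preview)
Your proof is correct and shares the paper's core idea: intersect a near-optimal $(n+1)$-dimensional approximating subspace for $K$ with the kernel of a functional vanishing on $L$ to obtain an $n$-dimensional subspace approximating $K_0$. The execution differs in a way worth noting. The paper first reduces to compact $K$ and to $\codim L = 1$, writes $L=\ker\nu$ for a single fixed functional $\nu$, and then must prove as a separate step that $\|\nu|_{L_M}\|>\tfrac12$ for all sufficiently large $M$; this needs an auxiliary approximation argument and leaves the finitely many small values of $M$ to be absorbed into the constant. You instead allow the functional to vary with $n$: applying Hahn--Banach to the specific point $y_0\in\mx_{n+1}$ gives $f(y_0)=\dist(y_0,L)>d/4$ for free, with no intermediate lemma. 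This streamlines the argument, yields the explicit constant $C=2+4R/d$, and handles arbitrary closed $L$ directly without the preliminary reductions. The paper's version has the minor feature that all the approximating subspaces $L_{M,0}$ lie in the fixed hyperplane $\ker\nu\supset L$, but this is not used elsewhere.
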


\begin{proof} It is well-known (see \cite[p.~10]{Pin85}) that a bounded set $K$
is compact if and only if $\lim_{n\to\infty}d_n(K)=0$. Therefore
it suffices to consider the case when $K$ is compact. It is
clearly enough to consider the case when $L$ is a subspace of
codimension $1$. Let $L=\ker\nu$ where $\nu\in\mx^*$. We may
assume without loss of generality that the norm of the restriction
of $\nu$ to $\lin K$ satisfies $||\nu|_{{\rm lin}(K)}||=1$. For
each $n\in\mathbb{N}\cup\{0\}$ let $L_n\subset\mx$ be a subspace
of dimension $n$ satisfying $\sup_{x\in K}E(x,L_n)\le
2d_n(K)$.\medskip

First we show that there exists $N\in\mathbb{N}$ such that
$||\nu|_{L_M}||>\frac12$ for all $M\ge N$. Let $0<\ep<1$ and let
$x_i\in K$ and scalars $a_i$ $(i=1,\dots,k)$ be such that the
vector ${h=\sum_i a_ix_i}$ satisfies $||h||=1$ and $\nu(h)>1-\ep$.
Let $\delta>0$ be such that $\delta||\nu||\sum|a_i|<\ep$. Let $N$
be such that for $M\ge N$ we have $d_M(K)<\delta/2$. Then for
$M\ge N$ there exist $y_i\in L_M$ such that $||x_i-y_i||<\delta$.
Therefore the vector $g:=\sum_i a_iy_i$ satisfies
$||\nu||\cdot||g-h||<\ep$ and $g\in L_M$. Choosing appropriate
$\ep$ and $\delta$ we get $||\nu|_{L_M}||>\frac12$.
\medskip

Let $M\ge N$ and let $L_{M,0}=L_M\cap\ker\nu$. Let $x\in K_0$. We
are going to show that $E(x,L_{M,0})<(2||\nu||+1)2d_M(K)$. By the
definition of $L_M$ there is $y\in L_M$ such that $||x-y||\le
2d_M(K)$. Since $\nu(x)=0$, we have $|\nu(y)|\le||\nu|| 2d_M(K)$.
Since $||\nu|_{L_M}||>\frac12$, we conclude that $E(y, L_{M,0})<
4||\nu|| d_M(K)$. Therefore $E(x,L_{M,0})<(2||\nu||+1)2d_M(K)$. It
is clear that $\dim L_{M,0}=M-1$. Thus for $M\ge N$ we have
$d_{M-1}(K_0)<(2||\nu||+1)2d_M(K)$. The conclusion follows.
\end{proof}

\begin{definition} {\rm Let $\{a_n\}$ be a non-increasing sequence of
positive numbers satisfying $\lim_{n\to\infty}a_n=0$. We say that
$\{a_n\}$ is {\it lacunary} if}
\begin{equation}\label{usl-bystr}
\liminf_{n\to\infty} \frac {a_{n+1}}{a_n} = 0.
\end{equation}
\end{definition}

\begin{lemma}\label{bystr}
If the sequence  $\{d_n(K)\}_{n=1}^\infty$ is lacunary, then
$G(K)\subset \mathcal{A}_K$.
\end{lemma}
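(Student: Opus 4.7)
The plan is to argue by contradiction: assume $D\in G(K)$ but $D\notin\ma_K$, and use the two preceding lemmas to show that $d_{n+1}(K)/d_n(K)$ is bounded below by a positive constant, contradicting lacunarity.

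First I would set $L=D^{-1}(V_K)$, which is a closed subspace of $\mx$ because $D$ is continuous and $V_K$ is closed. The assumption $D\notin\ma_K$ means $D(V_K)\not\subset V_K$, so $V_K\not\subset L$, and in particular $K\not\subset L$. This puts us exactly in the setting of Lemma \ref{L:codim1}, which yields a constant $C$ with $d_n(K\cap L)\le C\,d_{n+1}(K)$ for all $n$.

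The key observation is that $D(K\cap L)\supset K$. Indeed, given any $y\in K$, the covering hypothesis $DK\supset K$ supplies some $x\in K$ with $Dx=y$; but $y\in K\subset V_K$, so $x\in D^{-1}(V_K)=L$, hence $x\in K\cap L$. Applying Lemma \ref{L:cover} with $K_0=K\cap L$ gives $d_n(K)\le \|D\|\,d_n(K\cap L)$. Chaining this with the bound from Lemma \ref{L:codim1} yields
\[
d_n(K)\le C\|D\|\,d_{n+1}(K)\qquad\text{for all }n,
\]
so $d_{n+1}(K)/d_n(K)\ge (C\|D\|)^{-1}>0$, contradicting $\liminf_{n\to\infty} d_{n+1}(K)/d_n(K)=0$.

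There is no serious obstacle here; the only subtlety is recognizing that the right subspace to cut $K$ with is precisely the preimage $D^{-1}(V_K)$, which is tailored so that (i) the covering property of $D$ descends to a covering of $K$ by $K\cap L$, and (ii) the hypothesis $D\notin\ma_K$ is exactly what guarantees $K\not\subset L$ so that Lemma \ref{L:codim1} applies. Once that is spotted, the rest is a one-line combination of the two lemmas.
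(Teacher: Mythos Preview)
Your proof is correct and follows essentially the same approach as the paper: set $K_0=K\cap D^{-1}(V_K)$, observe $D(K_0)\supset K$, and combine Lemmas~\ref{L:cover} and~\ref{L:codim1} to bound $d_{n+1}(K)/d_n(K)$ below. Your write-up is in fact a bit more careful than the paper's in spelling out why $K\not\subset L$ and why $D(K\cap L)\supset K$.
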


\begin{proof} Let
$R\in\ml(\mx )$ be such that $RV_K$ is not contained in $V_K$. We
have to show  that $RK$ does not contain $K$. Assume the contrary.

It follows from our assumption that $R^{-1}(V_K)$ is a proper
subspace of $V_K$ and $R(K_0)\supset K$ where $K_0=K\cap
R^{-1}(V_K)$ is a proper section of $K$.
\medskip

By Lemma \ref{L:cover} we get $d_n(K)\le ||R||d_n(K_0)$ for all
$n\in\mathbb{N}\cup\{0\}$. By Lemma \ref{L:codim1} we get
$d_n(K_0)\le Cd_{n+1}(K)$ for some $0<C<\infty$ (which depends on
$K$ and $K_0$, but not on $n$) and all $n\in\mathbb{N}\cup\{0\}$.
We get $d_{n+1}(K)\ge (C||R||)^{-1}d_n(K)$, hence the sequence
$\{d_n(K)\}_{n=1}^\infty$ is not lacunary. We get a contradiction.
\end{proof}

\begin{remark} The assumptions of convexity and symmetry of $K$ are not needed in Lemmas
{\rm\ref{L:cover}}, {\rm\ref{L:codim1}}, and {\rm\ref{bystr}}.
\end{remark}

Combining Theorem \ref{vklyuch} and Lemma \ref{bystr} we get

\begin{theorem} If an absolutely convex compact $K$ is such that the sequence $\{d_n(K)\}_{n=1}^\infty$ is lacunary, then
$WG(K)=\mathcal{A}_K$.
\end{theorem}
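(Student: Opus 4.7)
The plan is to obtain the equality by establishing the two inclusions $\mathcal{A}_K\subset WG(K)$ and $WG(K)\subset\mathcal{A}_K$ separately, each of which is essentially a direct consequence of results already at hand.

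For the inclusion $\mathcal{A}_K\subset WG(K)$, I would simply invoke Theorem \ref{vklyuch}, since it asserts precisely this containment for every absolutely convex infinite-dimensional compact $K$ (and the lacunary hypothesis forces $K$ to be infinite-dimensional, because otherwise $d_n(K)$ would eventually vanish and the ratios in \eqref{usl-bystr} would be undefined or trivial). So this direction is free.

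For the reverse inclusion $WG(K)\subset\mathcal{A}_K$, the plan is to apply Lemma \ref{bystr}, which gives $G(K)\subset\mathcal{A}_K$ under the lacunarity assumption on $\{d_n(K)\}$. Then I would pass to the \WOT-closure of both sides. The key observation here is the remark made in the introduction (right after the definition of $\mathcal{A}_K$) that $\mathcal{A}_K$ is \WOT-closed: this is because invariance of $V_K$ under an operator $T$ is expressible as the vanishing of $f(Tx)$ for every $x\in V_K$ and every $f$ in the pre-annihilator of $V_K$, and each such functional $E\mapsto f(Ex)$ is \WOT-continuous. Thus $WG(K)=\overline{G(K)}^{\WOT}\subset\overline{\mathcal{A}_K}^{\WOT}=\mathcal{A}_K$.

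Combining the two inclusions yields $WG(K)=\mathcal{A}_K$, as required. There is no significant obstacle in this particular argument: all the work has already been done in Theorem \ref{vklyuch} (the hard direction, via the constructive Lemma \ref{konechn}) and in Lemma \ref{bystr} (which uses the width estimates of Lemmas \ref{L:cover} and \ref{L:codim1}). The final theorem is therefore just a synthesis, with the only subtlety being the (already noted) \WOT-closedness of $\mathcal{A}_K$ used in taking closures on both sides of $G(K)\subset\mathcal{A}_K$.
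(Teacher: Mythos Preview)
Your proposal is correct and follows exactly the paper's approach: the theorem is stated there as an immediate combination of Theorem \ref{vklyuch} (giving $\mathcal{A}_K\subset WG(K)$) and Lemma \ref{bystr} (giving $G(K)\subset\mathcal{A}_K$, hence $WG(K)\subset\mathcal{A}_K$ since $\mathcal{A}_K$ is \WOT-closed). One minor terminological slip: where you write ``pre-annihilator of $V_K$'' you mean the annihilator $V_K^{\perp}\subset\mx^*$, since the pre-annihilator in the paper's convention goes from subsets of $\mx^*$ to subsets of $\mx$.
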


\section{Covering of ellipsoids}

\subsection{$s$-numbers}

Now we restrict our attention to the Hilbert space case, that is,
we consider  sets $K$ of the form $A(B_{\mh})$ where $A$ is an
infinite-dimensional bounded compact operator from a Hilbert space
$\mh$ to a Hilbert space $\mh_1$. Such sets are called {\it
ellipsoids}.
\medskip

\noindent{\bf Note.} We continue using the Banach space theory
notation and terminology. In particular, unless explicitly stated
otherwise, by $A^*$ we mean the Banach-space-theoretical conjugate
operator. It does not seem that anything will be gained if we
introduce Hilbert-space duality, but it can cause some confusion
when we apply Banach space case results for Hilbert spaces.

\begin{remark} Many of the results below are true for $A(B_\mh)$
with non-compact $A$ and usually the corresponding proofs are much
simpler. We restrict our attention to the compact case.
\end{remark}

\begin{definition} {\rm (See \cite[Chapter II, \S2]{GK69}) The
eigenvalues of the operator $(E^*E)^{1/2}$ (where $E^*$ is the
conjugate in the Hilbert space sense) are called the {\it
$s$-numbers} of the operator $E$. Notation:
$\{s_n(E)\}_{n=1}^\infty$.}
\end{definition}

With this notation we have the following equalities for
$n$-widths: $$d_n(A(B_{\mh}))=s_{n+1}(A)$$ (see \cite[Theorem
2.2, p.~31]{GK69}).\medskip

For ellipsoids we have a converse to the Lemma \ref{L:cover}.

\begin{lemma}\label{conver:cover}
If $K_0,K$ are ellipsoids in Hilbert spaces $\mh_1$, $\mh_2$,
respectively, and $d_n(K)\le C d_n(K_0)$ for some $C>0$ and all
$n\in\mathbb{N}\cup\{0\}$, then there is an operator $D\in
\ml(\mh_1,\mh_2)$ such that $DK_0\supset K$ and $\|D\|\le C$.
\end{lemma}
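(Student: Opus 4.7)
The plan is to exploit the singular value decomposition and produce $D$ explicitly by matching axes of the two ellipsoids. Write $K_0 = A_0(B_{\mk_1})$ and $K = A(B_{\mk_2})$ with $A_0:\mk_1\to\mh_1$ and $A:\mk_2\to\mh_2$ compact. The Schmidt decomposition gives orthonormal systems $\{e_n^0\}\subset\mk_1$, $\{f_n^0\}\subset\mh_1$ and $\{e_n\}\subset\mk_2$, $\{f_n\}\subset\mh_2$ with
\[
A_0 x=\sum_n s_n(A_0)\,\langle x,e_n^0\rangle\,f_n^0,\qquad A y=\sum_n s_n(A)\,\langle y,e_n\rangle\,f_n.
\]
Using $d_n(A(B_\mh))=s_{n+1}(A)$, the hypothesis $d_n(K)\le C\,d_n(K_0)$ becomes $s_n(A)\le C\,s_n(A_0)$ for every $n\ge 1$. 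In particular, whenever $s_n(A_0)=0$ we also have $s_n(A)=0$, so the ratios $s_n(A)/s_n(A_0)$ make sense (set $0/0:=0$) and are bounded by $C$.

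Now I would define $D:\mh_1\to\mh_2$ by
\[
Df_n^0=\frac{s_n(A)}{s_n(A_0)}\,f_n
\]
for all $n$ with $s_n(A_0)>0$, and $D=0$ on the orthogonal complement of $\overline{\lin\{f_n^0\}}$ in $\mh_1$. Since $\{f_n\}$ is orthonormal and the coefficients are uniformly bounded by $C$, an orthogonality argument gives $\|D\|\le C$.

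To verify the covering $DK_0\supset K$, take an arbitrary $x\in K$, write $x=Ay$ with $\|y\|\le 1$, expand $y=\sum_n c_n e_n$ so that $x=\sum_n s_n(A)c_n f_n$, and set $y':=\sum_n c_n e_n^0\in\mk_1$. Then $\|y'\|=\|y\|\le 1$, so $z:=A_0 y'=\sum_n s_n(A_0)c_n f_n^0\in K_0$, and a direct computation yields $Dz=\sum_n s_n(A)c_n f_n=x$. This completes the proof.

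The construction is essentially routine once the singular decomposition is in place; the only subtle point is the bookkeeping for indices $n$ at which $s_n(A_0)=0$ (which includes all $n$ beyond $\dim\overline{\range(A_0)}$ when $A_0$ has finite-dimensional range). The hypothesis forces $s_n(A)=0$ at those same indices, so the partial isometry-plus-scaling definition of $D$ is well-defined on $\overline{\lin\{f_n^0\}}$ and can be harmlessly extended by $0$ elsewhere. This is the main (and only real) obstacle, and it is dispatched by reading the hypothesis carefully.
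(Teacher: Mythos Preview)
Your proof is correct and follows essentially the same Schmidt-expansion route as the paper: both identify $d_n$ with $s_{n+1}$ and then build $D$ diagonally with respect to the singular bases. The one cosmetic difference is that the paper sets $Df_n^0 = C f_n$ (so $D|_{V_{K_0}}$ is a constant multiple of an isometry onto $V_K$) rather than using your variable weights $s_n(A)/s_n(A_0)$; the paper's choice is later exploited in Remark~\ref{R:operator} and Lemma~\ref{L:4.5modif}, but for the present lemma either construction works.
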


\begin{proof}
The result follows from the so-called {\it Schmidt expansion} of a
compact operator (see \cite[p.~28]{GK69}), which implies that
\[K= A(B_{\mh}) = \left\{\sum_{n=1}^\infty\alpha_ns_n(A)h_n:~
\{\alpha_n\}_{n=1}^\infty\in\ell_2,~ \{h_n\}_{n=1}^\infty\hbox{ is an orthonormal sequence}\right\}\] and
\[K_0= B(B_{\mh}) = \left\{\sum_{n=1}^\infty\alpha_ns_n(B)g_n:~
\{\alpha_n\}_{n=1}^\infty\in\ell_2,~ \{g_n\}_{n=1}^\infty\hbox{ is
an orthonormal sequence}\right\}.\] It is easy to see that there
is a bounded linear operator $D$ which maps $g_n$ onto $Ch_n$, and
that this operator satisfies the conditions $D(K_0)\supset K$,
$\|D\| \le C$.
\end{proof}

\begin{remark}\label{R:operator} The proof of Lemma {\rm\ref{conver:cover}} shows that the desired operator $D$
can be constructed as an operator whose restriction to $V_{K_0}$
is a multiple of a suitable chosen bijective isometry between
$V_{K_0}$ and $V_K$, extended to $\mh_1$ in an arbitrary way.
\end{remark}

Known results on $s$-numbers imply the following lemma.

\begin{lemma}\label{L:FinCodEllips} Let $K$ be an ellipsoid in a Hilbert space $\mh$ such
that $\{d_n(K)\}_{n=0}^\infty$ is not lacunary. Let $K_0$ be the
intersection of $K$ with a closed linear subspace of finite
codimension. Then there exists $\delta>0$ such that
$d_n(K_0)\ge\delta d_n(K)$ and a bounded linear operator
$Q:\overline{\lin(K_0)}\to\overline{\lin(K)}$ satisfying
$Q(K_0)\supset K$.
\end{lemma}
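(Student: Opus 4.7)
The plan is first to prove the width inequality $d_n(K_0) \ge \delta d_n(K)$; once that is in hand, the operator $Q$ is produced by applying Lemma~\ref{conver:cover} and Remark~\ref{R:operator} with $\mh_1 := V_{K_0}$, $\mh_2 := V_K$, and constant $C = 1/\delta$, the output $D$ of that lemma being exactly the desired $Q$.

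Write $K = A(B_{\mh_0})$ for a compact operator $A:\mh_0 \to \mh$; after passing to $(\ker A)^{\perp}$ we may assume $A$ is injective. Let $L$ denote the given subspace of $\mh$ with $k := \codim L < \infty$, and set $M := A^{-1}(L)$. Then $K_0 = A|_M(B_M)$, so $K_0$ is itself an ellipsoid (in $V_{K_0}$), and $d_n(K_0) = s_{n+1}(A|_M)$. The linear map $x \mapsto Ax + L$ induces an injection $\mh_0/M \hookrightarrow \mh/L$, whence $\codim M \le k$.

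The core step is the $s$-number inequality $s_{n+k}(A) \le s_n(A|_M)$. Letting $P_M$ be the orthogonal projection onto $M$, decompose $A = AP_M + A(I - P_M)$. Since $A(I - P_M)$ has rank at most $\codim M \le k$, we get $s_{k+1}(A(I - P_M)) = 0$, and the standard subadditivity $s_{n+k}(X + Y) \le s_n(X) + s_{k+1}(Y)$ of singular numbers yields $s_{n+k}(A) \le s_n(AP_M) = s_n(A|_M)$, using that extending $A|_M$ by zero on $M^{\perp}$ does not affect the nonzero singular values. Translating to widths: $d_m(K_0) \ge d_{m+k}(K)$ for every $m \ge 0$.

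To conclude, invoke non-lacunarity of $\{d_n(K)\}$. Fix $c' \in (0,\, \liminf_n d_{n+1}(K)/d_n(K))$ and choose $N$ with $d_{n+1}(K) \ge c' d_n(K)$ for all $n \ge N$; iteration gives $d_m(K_0) \ge d_{m+k}(K) \ge (c')^k d_m(K)$ for $m \ge N$. Since $K_0$ is still infinite-dimensional (it is $K$ intersected with a finite-codimensional subspace), every $d_m(K_0)$ is positive, and so the finite set of ratios $d_m(K_0)/d_m(K)$ for $m < N$ is bounded below by a positive constant. Taking $\delta$ to be the minimum of $(c')^k$ and these exceptional ratios completes the width estimate, and hence the lemma. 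The only delicate bookkeeping point is controlling $\codim A^{-1}(L)$ by $\codim L$, which is what justifies the absorption of the index shift by $k$ via $s$-number subadditivity.
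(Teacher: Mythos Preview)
Your proof is correct and follows essentially the same route as the paper: both identify $K_0$ with the image of the restriction $A|_M$, view $A|_M$ (extended by zero) as $A$ plus a finite-rank perturbation, and invoke the resulting $s$-number inequality $s_n(A|_M)\ge s_{n+k}(A)$ together with non-lacunarity before appealing to Lemma~\ref{conver:cover}. The only difference is cosmetic: the paper cites the inequality $s_n(A+G)\ge s_{n+r}(A)$ from \cite[Corollary~2.1]{GK69} directly and leaves the ``non-lacunary $\Rightarrow$ uniform $\delta$'' step implicit, whereas you derive the inequality from $s$-number subadditivity and spell out that step.
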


\begin{proof}  Let $A:\mh\to\mh$ be a compact operator satisfying $K = A(B_{\mh})$. The sequence
$\{d_n(K_0)\}_{n=0}^\infty$ is the sequence of $s$-numbers of a
restriction of $A$ to a subspace of finite codimension. This
sequence is, in turn, the sequence of $s$-numbers of an operator
of the form $A+G$, where $G$ is an operator of finite rank.
\medskip

It is known \cite[Corollary 2.1, p.~29]{GK69} that $s_n(A+G)\ge
s_{n+r}(A)$, where $r$ is the rank of $G$. Combining this
inequality with the assumption that the sequence
$\{s_n(A)\}_{n=1}^\infty$ is not lacunary, we get the desired
inequality.
\medskip

The last statement of the lemma follows from Lemma
\ref{conver:cover}.
\end{proof}

\subsection{\WOT}

\begin{theorem}\label{T:Hilbert+K} If $\mh$ is a Hilbert space, $K\subset \mh$ is an ellipsoid
and the sequence $\{d_n(K)\}$ is not lacunary, then $WG(K)=
\ml(\mh)$.
\end{theorem}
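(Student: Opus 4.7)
The plan is to show that every $T\in\ml(\mh)$ lies in $WG(K)$ by producing, inside each basic \WOT-neighborhood of $T$, an operator $D\in G(K)$. Such a neighborhood has the form $T+\mju$ where $\mju=\{E:|f_j(Ex_i)|<\ep\}$ for finitely many $x_1,\dots,x_n\in\mh$ and $f_1,\dots,f_m\in\mh^*$. The key tool is Lemma \ref{L:FinCodEllips}: in the ellipsoid/non-lacunary setting, sections of $K$ by subspaces of finite codimension still cover $K$ via a bounded operator. The Hilbert-space setting brings the additional advantage of orthogonality, which lets us take the ``covering section'' orthogonal to the finite-dimensional test space $\my=\lin\{x_1,\dots,x_n\}$, thereby decoupling the covering requirement from the approximation requirement.

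Concretely, I would set $L=\my^\perp$, a closed subspace of finite codimension $\le n$, and $K_0=K\cap L$. Since $\{d_n(K)\}$ is not lacunary, Lemma \ref{L:FinCodEllips} supplies a bounded operator $Q\colon V_{K_0}\to V_K$ with $Q(K_0)\supset K$. Because $V_{K_0}\subset L=\my^\perp$, the subspaces $V_{K_0}$ and $\my$ are mutually orthogonal, so $\mh$ admits the orthogonal decomposition $\mh=V_{K_0}\oplus\my\oplus Z$ with $Z=(V_{K_0}\oplus\my)^\perp$. Define $D\in\ml(\mh)$ piecewise by $D|_{V_{K_0}}=Q$, $D|_\my=T|_\my$, and $D|_Z=0$; boundedness is immediate from the orthogonality of the three pieces.

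Two verifications complete the argument. First, $D\in G(K)$: given $k\in K$, the inclusion $Q(K_0)\supset K$ produces $k_0\in K_0$ with $Qk_0=k$, and since $k_0\in K_0\subset K\cap V_{K_0}$ we have $Dk_0=Qk_0=k$, so $DK\supset K$. Second, $D\in T+\mju$: every $x_i$ lies in $\my$, whence $Dx_i=Tx_i$ and therefore $f_j\bigl((D-T)x_i\bigr)=0<\ep$ for all $i,j$. As $T$ and $\mju$ were arbitrary, $T\in WG(K)$, giving $WG(K)=\ml(\mh)$. I expect no substantive obstacle beyond correctly invoking Lemma \ref{L:FinCodEllips}; orthogonality in Hilbert space makes the separation of the covering and approximation tasks essentially automatic, which is exactly the feature that fails in the Banach case treated by Theorem \ref{vklyuch}.
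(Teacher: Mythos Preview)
Your proposal is correct and follows essentially the same route as the paper: the paper reduces the theorem to Lemma~\ref{two-slow}, whose proof takes $\mz=\my^\perp$, sets $K_0=K\cap\mz$, applies Lemma~\ref{L:FinCodEllips} to get an operator $E$ with $EK_0\supset K$, and then extends $E$ by $N$ on $\my$. Your argument is the same construction written out directly from the definition of the \WOT, with the minor cosmetic difference that you record the extra orthogonal summand $Z$ explicitly; note also (as the paper observes in Remark~\ref{R:SOT}) that your $D$ satisfies $Dx_i=Tx_i$ exactly, so you are in fact proving \SOT-density.
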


The proof of Theorem \ref{vklyuch} shows that to prove Theorem
\ref{T:Hilbert+K} it suffices to prove the following lemma (this
can also be seen from the definition of \WOT).

\begin{lemma}\label{two-slow}
Let $K$  be an ellipsoid in a Hilbert space $\mh$. Suppose that
the sequence $\{d_n(K)\}$ is non-lacunary. Then for each
finite-dimensional subspace $\my\subset \mh$ and each linear
mapping $N:\my \to \mh$, there is an operator $D$ satisfying
conditions: $Dy = Ny$ for all $y\in \my$, and $DK\supset K$.
\end{lemma}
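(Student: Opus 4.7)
The plan is to exploit the orthogonal structure of the Hilbert space together with Lemma~\ref{L:FinCodEllips}, which is where the non-lacunarity hypothesis is used. Decompose $\mh = \my\oplus\my^\perp$ orthogonally and define $D$ separately on the two summands. On $\my$ we are forced to take $D|_\my = N$; the freedom is in choosing $D$ on $\my^\perp$, and I want to choose it so that $D$ already covers $K$ using only points of $K_0 := K\cap\my^\perp$.

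Since $\my$ is finite-dimensional, $\my^\perp$ is a closed subspace of finite codimension in $\mh$. The non-lacunarity of $\{d_n(K)\}$ together with Lemma~\ref{L:FinCodEllips} then supplies a bounded linear operator $Q: V_{K_0}\to V_K$ with $Q(K_0)\supset K$. Extend $Q$ to a bounded operator $\widetilde Q:\my^\perp\to \mh$, for instance by setting it to $0$ on the orthogonal complement of $V_{K_0}$ inside $\my^\perp$.

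Now define $D\in\ml(\mh)$ by $D|_\my = N$ and $D|_{\my^\perp}=\widetilde Q$. Boundedness is immediate because $\my$ is finite-dimensional and $\widetilde Q$ is bounded, and $D|_\my = N$ by construction. To verify $DK\supset K$, take any $y\in K$ and use Lemma~\ref{L:FinCodEllips} to pick $x\in K_0$ with $Q(x)=y$. Then $x\in \my^\perp\cap K\subset K$, so $Dx = \widetilde Q(x) = Q(x) = y$, and hence $y\in DK$.

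The main obstacle is really absorbed into Lemma~\ref{L:FinCodEllips}: it is precisely the non-lacunarity hypothesis that allows a single bounded operator to push a finite-codimension section $K_0$ onto a set containing $K$; without it no such $Q$ need exist, as the remark following Lemma~\ref{bystr} makes clear. Once Lemma~\ref{L:FinCodEllips} is in hand, the orthogonality of the decomposition $\mh = \my\oplus\my^\perp$ makes the assembly of $D$ trivial because the two pieces do not interact: $N$ handles the prescribed values on $\my$, and $\widetilde Q$ handles the covering condition using only vectors orthogonal to $\my$.
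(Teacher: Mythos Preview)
Your proof is correct and follows essentially the same route as the paper: take the orthogonal complement $\my^\perp$, set $K_0=K\cap\my^\perp$, invoke Lemma~\ref{L:FinCodEllips} to obtain an operator covering $K$ from $K_0$, and then glue this with $N$ on $\my$. You are simply more explicit than the paper about extending $Q$ from $V_{K_0}$ to all of $\my^\perp$ and about verifying $DK\supset K$, but the argument is the same.
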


\begin{proof}
Let $\mz = {\my}^{\bot}$ and $K_0 = K\cap \mz$. By Lemma
\ref{L:FinCodEllips} there is an operator $E$ from $\mz$ to $\mh$
with $EK_0\supset K$. Extend it to an operator $D: \mh\to \mh$
setting $Dy = Ny$ on $\my$.
\end{proof}

\begin{remark}\label{R:SOT} One can see from the proof of Lemma {\rm\ref{two-slow}} that under the stated conditions
the closure of $G(K)$ in the strong operator topology coincides
with $\ml(\mh)$. \end{remark}

\begin{corollary}\label{klas} Let $K$ be an ellipsoid in a Hilbert
space $\mh$. Then:
\begin{itemize}
\item[{\bf (1)}] $WG(K)=\ma_K$ if the sequence
$\{d_n(K)\}_{n=0}^\infty$ is lacunary. \item[{\bf (2)}]
$WG(K)=\ml(\mh)$ if the sequence $\{d_n(K)\}_{n=0}^\infty$ is not
lacunary.
\end{itemize}
\end{corollary}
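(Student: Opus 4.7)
The plan is to observe that this corollary is essentially an assembly of results already established, specialized to the Hilbert/ellipsoid setting. An ellipsoid $K = A(B_\mh)$ is absolutely convex (as the image of the absolutely convex $B_\mh$ under a linear map) and infinite-dimensional (since we have assumed $A$ to be an infinite-dimensional compact operator), so the hypotheses of the Banach-space results in Sections 2--3 are in force.

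For part (1), when $\{d_n(K)\}_{n=0}^\infty$ is lacunary, I would invoke Lemma \ref{bystr} to conclude $G(K)\subset \ma_K$. Since $\ma_K$ is \WOT-closed (noted just before Theorem \ref{vklyuch}), taking the \WOT-closure of both sides yields $WG(K)\subset \ma_K$. The reverse inclusion $\ma_K\subset WG(K)$ is exactly the content of Theorem \ref{vklyuch}, so equality follows. (This is just the unnamed theorem stated at the end of Section 3, restated for ellipsoids.)

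For part (2), when the sequence is not lacunary, the conclusion $WG(K)=\ml(\mh)$ is precisely the statement of Theorem \ref{T:Hilbert+K}, so there is nothing further to prove.

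There is no serious obstacle here; the corollary is a dichotomy collecting the two theorems. The only minor point worth mentioning in the write-up is the verification that an ellipsoid satisfies the absolute-convexity and infinite-dimensionality hypotheses required to feed into Theorem \ref{vklyuch}, and the use of \WOT-closedness of $\ma_K$ to promote the inclusion $G(K)\subset \ma_K$ from Lemma \ref{bystr} to $WG(K)\subset \ma_K$.
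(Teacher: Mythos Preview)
Your proposal is correct and matches the paper's implicit argument: the corollary is stated without proof because it simply combines the theorem at the end of Section~3 (which gives part~(1)) with Theorem~\ref{T:Hilbert+K} (which gives part~(2)). Your added remarks about verifying absolute convexity/infinite-dimensionality of ellipsoids and using the \WOT-closedness of $\ma_K$ are exactly the small checks one needs to make the assembly airtight.
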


\subsection{Ultra-weak topology}

It turns out that Theorem \ref{T:Hilbert+K} remains true if we
replace closure in the weak operator topology, by a closure in a
stronger topology, usually called {\it ultra-weak topology}. This
topology on $\ml(\mh)$ is defined as the weak$^*$ topology
corresponding to the duality $\ml(\mh)=(C_1(\mh))^*$, where
$C_1(\mh)$ is the space of nuclear operators. (Necessary
background can be found in \cite[Chapter II]{Tak79}, unfortunately
the terminology and notation there is different, the ultra-weak
topology is called {\it $\sigma$-weak topology}, see
\cite[p.~67]{Tak79}). Ultra-weak and strong operator topologies
are incomparable, for this reason our next result does not follow
from Remark \ref{R:SOT}.

\begin{theorem}\label{T:ultra} If $K$ is an ellipsoid in a Hilbert space $\mh$ and the sequence $\{d_n(K)\}_{n=1}^\infty$ is
not lacunary, then the ultra-weak closure of $G(K)$ coincides with
$\ml(\mh)$.
\end{theorem}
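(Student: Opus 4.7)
The plan is to show that every basic ultra-weakly open neighborhood of each $T\in\ml(\mh)$ meets $G(K)$. Such a neighborhood has the form
\[
U=\{R\in\ml(\mh):|\tr((R-T)S_j)|<\varepsilon,\ j=1,\ldots,k\}
\]
for finitely many nuclear operators $S_j\in C_1(\mh)$ and $\varepsilon>0$. The first step is to use the Schmidt expansion to write each $S_j=\sum_{n=1}^\infty\lambda_n^{(j)}u_n^{(j)}\otimes v_n^{(j)}$ with $\{u_n^{(j)}\},\{v_n^{(j)}\}$ orthonormal and $\sum_n\lambda_n^{(j)}=\|S_j\|_1<\infty$, so that $\tr((R-T)S_j)=\sum_n\lambda_n^{(j)}\langle(R-T)u_n^{(j)},v_n^{(j)}\rangle$.

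For $N$ to be chosen, let $\my=\lin\{u_n^{(j)}:n\le N,\ j\le k\}$, a finite-dimensional subspace. By Lemma~\ref{two-slow} there is $D\in G(K)$ with $D|_\my=T|_\my$; then for $n\le N$ we have $u_n^{(j)}\in\my$ and so $(D-T)u_n^{(j)}=0$, making the head portion of each trace functional vanish exactly. The residual error is
\[
\tr((D-T)S_j)=\sum_{n>N}\lambda_n^{(j)}\langle(D-T)u_n^{(j)},v_n^{(j)}\rangle.
\]
Writing $D=T|_\my\oplus E$ with $E\colon\my^\perp\to\mh$ the covering operator from Lemma~\ref{L:FinCodEllips}, and using Remark~\ref{R:operator}, I would arrange $E=T$ on $\my^\perp\ominus V_{K_0}$ (where $K_0=K\cap\my^\perp$) and $E=\delta^{-1}U$ on $V_{K_0}$, with $U\colon V_{K_0}\to V_K$ a bijective isometry and $\delta>0$ the constant from $d_n(K_0)\ge\delta d_n(K)$. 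Then $D-T$ is supported on $V_{K_0}$, reducing the tail to $\sum_{n>N}\lambda_n^{(j)}\langle(\delta^{-1}U-T)P_{V_{K_0}}u_n^{(j)},v_n^{(j)}\rangle$.

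The main obstacle is estimating this tail below $\varepsilon$. The naive bound $(\delta^{-1}+\|T\|)\sum_{n>N}\lambda_n^{(j)}$ does not suffice: the lower bound for $\delta$ coming from the non-lacunary $s$-number hypothesis is roughly $c^{\dim\my}$ for some $c\in(0,1)$ (since the inequality $s_n(A+G)\ge s_{n+r}(A)$ with $r=\dim\my$ gives $d_n(K_0)\ge d_{n+\dim\my}(K)$), while $\sum_{n>N}\lambda_n^{(j)}$ need only decay slowly. To overcome this I would exploit the remaining flexibility: enlarge $\my$ to include also other relevant vectors (such as projections of the $v_n^{(j)}$ onto $V_K$) and choose the isometry $U$ so that $\delta^{-1}U$ mimics $T$ on a suitable finite-dimensional portion of $V_{K_0}$, forcing the inner products $\langle(\delta^{-1}U-T)P_{V_{K_0}}u_n^{(j)},v_n^{(j)}\rangle$ to be small for the dominant indices of the tail. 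Balancing this cancellation against the nuclear tail decay, while the non-lacunary hypothesis secures a usable $\delta$ at every stage, is the technical crux of the argument.
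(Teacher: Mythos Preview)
Your identification of the obstacle is correct, and the obstacle is real: the covering operator on $\my^\perp$ has norm of order $\delta^{-1}\sim c^{-\dim\my}$, while the nuclear tail $\sum_{n>N}\lambda_n^{(j)}$ is only guaranteed to be $o(1)$. Your proposed remedy---choosing the isometry $U$ so that $\delta^{-1}U$ ``mimics'' $T$ on selected vectors---cannot work as stated, because $\delta^{-1}U$ is a scalar multiple of an isometry, so $\|\delta^{-1}Ux\|=\delta^{-1}\|x\|$ for every $x\in V_{K_0}$; you cannot make $(\delta^{-1}U-T)x$ small in norm on any nonzero vector. Arranging instead that $\langle\delta^{-1}Uu,v\rangle\approx\langle Tu,v\rangle$ for a further finite list of pairs $(u,v)$ just adds dimensions to the problem and reproduces the same dimension-versus-decay tension one step later. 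There is no balancing to be done here; the approximation route is blocked.

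The paper avoids the norm issue entirely by aiming for \emph{exact} equality of the trace functionals rather than an estimate. The key new ingredient is a surjectivity lemma: given linearly independent $T_1,\dots,T_m\in C_1(\mh)$, there is a finite-rank projection $P$ such that $PT_1,\dots,PT_m$ remain linearly independent, so the linear map $U\mapsto(\tr(UPT_i))_{i=1}^m$ from $\ml(\mh)$ is onto. One then fixes $P$ \emph{first}, defines $D$ on $\ker P$ so that $D(K\cap\ker P)\supset K$ using non-lacunarity (its norm may be enormous---this is now harmless), and afterwards chooses the finite-rank piece $D|_{\im P}=UP$ so that
\[
\tr(UPT_i)=\tr(RT_i)-\tr(D(I-P)T_i)\qquad(i=1,\dots,m),
\]
which is possible by surjectivity. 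The crucial point is that $UP$ is completely free and can absorb whatever contribution the uncontrolled large-norm piece $D(I-P)$ makes to the traces. This decouples the covering constraint from the trace constraint; your scheme keeps them entangled, which is why it stalls.
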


\begin{proof} Let $\{T_i\}_{i=1}^m$ be a finite collection of operators in
$C_1(\mh)$ and $R\in\ml(\mh)$. It suffices to show that there is
$D\in\ml(\mh)$ satisfying
\begin{equation}\label{E:D}
\tr(DT_i)=\tr(RT_i)\hbox{ for } i=1,\dots,m\hbox{ and }DK\supset
K.
\end{equation}
It is clear that we may assume that the operators $T_i$ are
linearly independent.

\begin{lemma}\label{L:surjective} If $\{T_i\}_{i=1}^m$ are linearly independent, then
there exists a finite rank projection $P\in\ml(\mh)$ such that the
mapping
\[\omega: \ml(\mh)\to \mathbb{R}^m\]
given by
\[\omega(U)=\left\{\tr(UPT_i)\right\}_{i=1}^m\] is surjective.
\end{lemma}

\begin{proof} We have to prove that there is a finite rank projection $P$ such
that the operators $PT_i$ are linearly independent (in this case
the mapping $\omega$ will be surjective). Using induction we may
suppose that $P_0T_1,...,P_0T_{m-1}$ are linearly independent for
some $P_0$. Consider the set $M_0$ of those finite rank
projections $P$ which commute with $P_0$ and satisfy $\im
P\supset\im P_0$. We claim that there exists $P\in M_0$ such that
$PT_1,\dots,PT_m$ are linearly independent.
\medskip

Assume contrary, then for each $P\in M_0$, one can find
$\lambda_1(P),...,\lambda_{m-1}(P) \in \mathbb{C}$ satisfying
$PT_m = \sum_{k=1}^{m-1}\lambda_k(P)PT_k$ (using the definition of
$M_0$ it is easy to get a contradiction if $PT_1,\dots,PT_{m-1}$
are linearly dependent). Our next step is to show that the numbers
$\{\lambda_k(P)\}_{k=1}^{m-1}$ do not depend on $P$. In fact, for
any $P_1,P_2\in M_0$ we have $\sum_{k<m}(\lambda_k(P_1)
-\lambda_k(P_2))P_0T_k = 0$. So let $\{\lambda_k\}_{k=1}^{m-1}$ be
such that $\lambda_k(P) = \lambda_k$ for all $P\in M_0$. Then the
operator $T = T_m - \sum_{k<m}\lambda_kT_k$ has the property that
$PT = 0$ for all $P\in M_0$. It is easy to see that this implies
$T = 0$. We get a contradiction with the linear independence of
$\{T_k\}_{k=1}^m$.
\end{proof}

We complete the proof of the theorem by showing the existence of
$D$ satisfying (\ref{E:D}). \medskip

\noindent {\bf 1.} We define $D$ on $\ker P$ as in Theorem
\ref{T:Hilbert+K}. This definition implies that the condition
$DK\supset K$ is satisfied.
\medskip

\noindent {\bf 2.} To show that the condition
$\tr(DT_i)=\tr(RT_i)$, $i=1,\dots,m$, is satisfied it suffices to
show the existence of $U\in\ml(\mh)$ satisfying
\begin{equation}\label{E:U}\tr((UP+D(I-P))T_i)=\tr(RT_i)~\forall i=1,\dots,m.\end{equation}
Since the condition (\ref{E:U}) can be rewritten as
$\{\tr(UPT_i)\}_{i=1}^m=\{\tr((R-D(I-P))T_i)\}_{i=1}^m$, where the
right-hand side does not depend on $U$, and the vectors
$\{\tr(UPT_i)\}_{i=1}^m$, $U\in\ml(\mh)$ cover (by Lemma
\ref{L:surjective}) the whole space $\mathbb{R}^m$, the existence
of $U$ satisfying (\ref{E:U}) follows.
\end{proof}

\begin{remark}  It would be interesting to prove an
analogue of Theorem {\rm\ref{vklyuch}} for the ultra-weak
topology.
\end{remark}

\section{Two ellipsoids}\label{S:2ell}

Let $\mh_1,\mh_2$ be two infinite dimensional separable Hilbert
spaces. We consider two ellipsoids, $K_1\subset\mh_1$,
$K_2\subset\mh_2$ and introduce the set
\begin{equation}\label{twocom}
G(K_1,K_2):=\{T\in\ml(\mh_1,\mh_2):~ TK_1\supset K_2\}.
\end{equation}

We are interested in the description of the \WOT-closure of
$G(K_1,K_2)$ which we denote by $WG(K_1,K_2)$. As in the case of
one ellipsoid, the description depends on the behavior of
sequences of Kolmogorov $n$-widths.
\medskip

We start with some simple but useful observations. It is easy to
see that
$$
G(K_2)G(K_1,K_2)G(K_1) \subset G(K_1,K_2).$$

Using this inclusion and elementary properties of \WOT~we get
\begin{equation}\label{module}WG(K_2)WG(K_1,K_2)WG(K_1) \subset WG(K_1,K_2).\end{equation}

Lemmas \ref{L:cover} and \ref{conver:cover} imply that the set
$G(K_1,K_2)$ is non-empty if and only if
\begin{equation}\label{Obolshoe}
d_n(K_2) = O(d_n(K_1)).
\end{equation}
From now on till the end of this section we assume that
\eqref{Obolshoe} is satisfied.

\begin{observation}\label{O:general} By Remark {\rm \ref{R:operator}}, condition \eqref{Obolshoe} implies that
there is an onto isometry $M: V_{K_1}\to V_{K_2}$ and a number
$\alpha\in\mathbb{R}^+$ such that $\alpha M(K_1)\supset K_2$.
Consider decompositions $\mh_1=V_{K_1}\oplus\mr_1$ and
$\mh_2=V_{K_2}\oplus\mr_2$. Let $A_1\in\ml(V_{K_1})$,
$B_1\in\ml(\mr_1,\mh_1)$, $A_2\in\ml(V_{K_2})$,
$B_2\in\ml(\mr_2,\mh_2)$,  and $C:\mr_1\to\mh_2$. Combining
Theorem {\rm \ref{vklyuch}} with \eqref{module} we get that the
composition $(A_2\oplus B_2)(\alpha M\oplus C)(A_1\oplus B_1)$ is
in $WG(K_1,K_2)$, where $A_i\oplus
B_i:V_{K_i}\oplus\mr_i\to\mh_i$, $i=1,2$.
\end{observation}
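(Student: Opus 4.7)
The plan is to factor the operator
$$(A_2\oplus B_2)(\alpha M\oplus C)(A_1\oplus B_1)$$
into three pieces and place the outer factors inside $WG(K_i)$ while the middle piece will sit in $G(K_1,K_2)$ itself; then the conclusion will follow from the module-like inclusion \eqref{module}.

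First I would verify that the middle factor already covers $K_2$, regardless of the arbitrary choice of $C:\mr_1\to\mh_2$. Since $K_1\subset V_{K_1}$, any $x\in K_1$ has trivial $\mr_1$-component, so $(\alpha M\oplus C)(x)=\alpha M(x)$, and therefore $(\alpha M\oplus C)(K_1)=\alpha M(K_1)\supset K_2$ by the defining property of $M$ and $\alpha$ recalled at the start of the observation. Hence $\alpha M\oplus C\in G(K_1,K_2)\subset WG(K_1,K_2)$.

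Next I would identify the outer factors. With respect to the decompositions $\mh_i=V_{K_i}\oplus\mr_i$, the operators of the form $A_i\oplus B_i$ (acting as $A_i\in\ml(V_{K_i})$ on $V_{K_i}$ and as $B_i\in\ml(\mr_i,\mh_i)$ on $\mr_i$) are precisely the operators whose restriction to $V_{K_i}$ maps into $V_{K_i}$; that is, they constitute the algebra $\ma_{K_i}$. Since each $K_i$ is an absolutely convex infinite-dimensional compact in $\mh_i$, Theorem \ref{vklyuch} gives $A_i\oplus B_i\in\ma_{K_i}\subset WG(K_i)$ for $i=1,2$.

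Combining these two facts with the inclusion \eqref{module}, we obtain
$$(A_2\oplus B_2)(\alpha M\oplus C)(A_1\oplus B_1)\in WG(K_2)\cdot WG(K_1,K_2)\cdot WG(K_1)\subset WG(K_1,K_2),$$
which is the claim. No real obstacle arises: the only points worth double-checking are that $\alpha M\oplus C$ covers $K_2$ for \emph{every} choice of $C$ (which holds because $K_1$ lies in $V_{K_1}$ and so the $\mr_1$-block of $\alpha M\oplus C$ is never applied to a point of $K_1$) and that the block operators $A_i\oplus B_i$ indeed leave $V_{K_i}$ invariant (immediate from $A_i\in\ml(V_{K_i})$).
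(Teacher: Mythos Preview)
Your proposal is correct and is exactly the argument the paper has in mind: the observation itself carries no separate proof beyond the phrase ``Combining Theorem~\ref{vklyuch} with \eqref{module},'' and you have simply unpacked that hint by checking that $\alpha M\oplus C\in G(K_1,K_2)$ and that each $A_i\oplus B_i\in\ma_{K_i}\subset WG(K_i)$.
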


To state our results on the description of $WG(K_1,K_2)$ we need
the following definitions.

\begin{definition} {\rm The $k^{th}$ {\it left shift of a sequence
$\{a_n\}^\infty_{n=0}$} $(k\ge 0)$ is the sequence
$\{a_{n+k}\}_{n=0}^\infty$.}
\end{definition}

\begin{definition} {\rm Let $\{a_n\}_{n=0}^\infty$ and $\{b_n\}_{n=0}^\infty$ be sequences of non-negative numbers.
We say that $\{a_n\}_{n=0}^\infty$ {\it majorizes}
$\{b_n\}_{n=0}^\infty$ if there is $0<C<\infty$ such that $b_n\le
Ca_n$ for all $n=0,1,2,\dots$.}
\end{definition}

The following theorem is the main result of this section:

\begin{theorem}\label{two:ell}
Let $K_1$ and $K_2$ be infinite dimensional ellipsoids in Hilbert
spaces $\mh_1$ and $\mh_2$. Assume that \eqref{Obolshoe} holds.
Then
\begin{itemize}
\item[{\bf (A)}] If all left shifts of the sequence
$\{d_n(K_1)\}_{n=0}^\infty$ majorize the sequence
$\{d_n(K_2)\}_{n=0}^\infty$, then $WG(K_1,K_2)=\ml(\mh_1,\mh_2)$.

\item[{\bf (B)}] If the $k^{th}$ left shift of $\{d_n(K_1)\}$
majorizes the sequence $\{d_n(K_2)\}$, but the $(k+1)^{th}$ left
shift does not (such cases are clearly possible), then
$WG(K_1,K_2)$ is the set of those operators $T\in\ml(\mh_1,\mh_2)$
for which the image of the space $T(V_{K_1})$ in the quotient
space $\mh_2/V_{K_2}$ is at most $k$-dimensional.
\end{itemize}
\end{theorem}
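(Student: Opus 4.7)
The plan is to establish both parts by the usual recipe: given a \WOT-neighborhood of $T$ determined by finite-dimensional $\my\subset\mh_1$ and $F\subset\mh_2^*$, construct $S\in G(K_1,K_2)$ with $Sx-Tx\in F_\bot$ for all $x\in\my$. Fixing a complement $\mr_1$ of $V_{K_1}$ in $\mh_1$ and setting $S|_{\mr_1}=T|_{\mr_1}$ reduces us to the case $\my\subset V_{K_1}$. The bookkeeping throughout rests on the $s$-number inequality $s_n(A+G)\ge s_{n+r}(A)$ for $G$ of rank $r$, which for a closed subspace $L\subset V_{K_1}$ of finite codimension $r$ translates (cf.~Lemma \ref{L:FinCodEllips}) into $d_n(K_1\cap L)\ge d_{n+r}(K_1)$, together with iterated Lemma \ref{L:codim1} giving $d_n(K_1\cap L)\le C^r d_{n+r}(K_1)$. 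Let $\pi:\mh_2\to\mh_2/V_{K_2}$ denote the quotient map.

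For Part (A), I pick a closed complement $L$ of $\my$ in $V_{K_1}$, so $L$ has codimension $\dim\my$ in $V_{K_1}$. The majorization hypothesis combined with the lower width inequality gives $d_n(K_2)\le C_{\dim\my}\,d_{n+\dim\my}(K_1)\le C'\,d_n(K_1\cap L)$, so Lemma \ref{conver:cover} produces $E:V_{K_1\cap L}\to V_{K_2}$ with $E(K_1\cap L)\supset K_2$. Extending $E$ by $0$ on the orthogonal complement of $V_{K_1\cap L}$ in $L$, and defining $S$ to equal this extension on $L$, $T$ on $\my$, and $T$ on $\mr_1$, yields $S\in G(K_1,K_2)$ agreeing with $T$ on $\my\cup\mr_1$.

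For the upper bound in Part (B), I first take $T\in G(K_1,K_2)$ and set $m:=\dim\pi T(V_{K_1})$ and $L_T:=T^{-1}(V_{K_2})\cap V_{K_1}$ (of codimension $m$ in $V_{K_1}$). Any $y\in K_2\subset V_{K_2}$ has some preimage $x\in K_1\subset V_{K_1}$ satisfying $Tx\in V_{K_2}$, forcing $x\in L_T$; hence $T(K_1\cap L_T)\supset K_2$. Lemma \ref{L:cover} combined with the upper width inequality then gives $d_n(K_2)\le\|T\|\,d_n(K_1\cap L_T)\le C^m\|T\|\,d_{n+m}(K_1)$, so the $m$-th left shift of $\{d_n(K_1)\}$ majorizes $\{d_n(K_2)\}$, forcing $m\le k$. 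To propagate this to $WG(K_1,K_2)$, I suppose for contradiction that $T\in WG(K_1,K_2)$ has $\dim\pi T(V_{K_1})\ge k+1$; pick $v_1,\dots,v_{k+1}\in V_{K_1}$ with $\pi Tv_1,\dots,\pi Tv_{k+1}$ linearly independent, choose functionals $g_j$ on $\mh_2/V_{K_2}$ making the matrix $(g_j(\pi Tv_i))$ invertible, and lift to $f_j\in\mh_2^*$ annihilating $V_{K_2}$. A sufficiently fine \WOT-approximant $S\in G(K_1,K_2)$ makes $(f_j(Sv_i))=(g_j(\pi Sv_i))$ invertible as well, so $\dim\pi S(V_{K_1})\ge k+1$, contradicting the bound already established on $G(K_1,K_2)$.

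For the lower bound in Part (B), I decompose $\mh_2=V_{K_2}\oplus\mr_2$ and write $T|_{V_{K_1}}=T_0+T_r$ with $T_0:V_{K_1}\to V_{K_2}$ and $T_r:V_{K_1}\to\mr_2$ of rank $k_0\le k$. Setting $W':=\ker T_r$, the bound $d_n(K_1\cap W')\ge d_{n+k_0}(K_1)\ge d_{n+k}(K_1)$ combined with $d_n(K_2)\le C\,d_{n+k}(K_1)$ yields $d_n(K_2)\le C'\,d_n(K_1\cap W')$, and Lemma \ref{conver:cover} with Remark \ref{R:operator} produces a scaled isometry $\alpha M:V_{K_1\cap W'}\to V_{K_2}$ with $\alpha M(K_1\cap W')\supset K_2$. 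To make the covering operator match $T_0$ modulo $F_\bot$ on the projection of $\my$ onto $V_{K_1\cap W'}$, I reduce to the single-compact setting by transporting the data through $\alpha M$: after a small perturbation placing $T_0(\my\cap V_{K_1})$ inside $\lin(K_2)$ (absorbed into the $F_\bot$-tolerance, as in the proof of Theorem \ref{vklyuch}), Lemma \ref{konechn} applied to $K_2\subset V_{K_2}$ yields $D_0\in G(K_2)$ matching the transported target, and $E:=D_0\circ\alpha M$ is the required covering operator. Extending $E$ to $S_0:V_{K_1}\to V_{K_2}$ by setting $S_0=T_0$ on the orthogonal complement of $V_{K_1\cap W'}$ in $V_{K_1}$, and putting $S:=S_0+T_r$ with $S|_{\mr_1}=T|_{\mr_1}$, produces $S\in G(K_1,K_2)$: the equation $Sx=y\in K_2\subset V_{K_2}$ forces $T_rx=0$, i.e.~$x\in W'$, and on $K_1\cap W'\subset V_{K_1\cap W'}$ we already have $S_0=E$ covering $K_2$. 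The main obstacle is this ``two-compact version'' of Lemma \ref{konechn} (where composition with $\alpha M$ becomes essential), together with the bookkeeping needed to handle the potential gap between $V_{K_1\cap W'}$ and $W'$.
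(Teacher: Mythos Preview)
Your argument is correct and tracks the paper's proof closely: Part (A) and the upper bound in (B) are identical in substance to what the paper does. For the lower bound in (B), both you and the paper reduce to finding a closed subspace of $V_{K_1}$ of codimension at most $k$ whose section of $K_1$ still covers $K_2$ via a scaled isometry, and then apply Lemma~\ref{konechn} inside $V_{K_2}$ to the transported data. The one real difference is the choice of that subspace: the paper takes $(T^*F_O)_\bot\cap V_{K_1}$ with $F_O=F\cap V_{K_2}^\perp$, a subspace that depends on the test functionals $F$, and accordingly splits $\my=\my_1\oplus\my_2$ with $D|_{\my_2}=T|_{\my_2}$; you instead fix $W'=\ker T_r$ once and for all, carry $T_r$ unchanged into $S$, and let the orthogonal decomposition $V_{K_1}=V_{K_1\cap W'}\oplus(V_{K_1\cap W'})^\perp$ absorb the rest. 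Your choice is a mild simplification of the bookkeeping. As a side remark, the gap you worry about between $V_{K_1\cap W'}$ and $W'$ does not occur for ellipsoids: the calculation behind Lemma~\ref{L:FinCodEllips} (writing $K_1\cap W'$ as $A(B_M)$ for $M=A^{-1}(W')$ of codimension $k_0$) gives $\overline{\lin(K_1\cap W')}=\overline{A(M)}=W'$.
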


\begin{proof} {\bf (A)} Observe that to show $WG(K_1,K_2) =
\ml(\mh_1,\mh_2)$ it suffices to find, for an arbitrary
finite-dimensional subspace $\my\in\mh_1$ and an arbitrary
operator $N:\my\to\mh_2$, an operator $D\in\ml(\mh_1,\mh_2)$
satisfying the conditions: $D|_\my=N|_\my$ and $D(K_1)\supset
K_2$. (This condition implies that $G(K_1,K_2)$ is dense in
$\ml(\mh_1,\mh_2)$ even in the strong operator topology.)
\medskip

We find such an operator $D$ in the following way: let $\my^\perp$
be an orthogonal complement of $\my$. The argument of Lemma
\ref{L:FinCodEllips} shows that the sequence $\{d_n(K_1\cap
\my^\perp)\}$ majorizes some left shift of the sequence
$\{d_n(K_1)\}$ and thus, by our assumption, majorizes the sequence
$\{d_n(K_2)\}$. By Lemma \ref{conver:cover} there is a continuous
linear operator $Y:\my^\perp\to\mh_2$ such that $Y(K_1\cap
\my^\perp)\supset K_2$. We let $D|_{\my^\perp}=Y$ and $D|_\my=N$.
It is clear that $D$ has the desired properties.
\medskip

\noindent{\bf (B)} Suppose that the $k^{th}$ left shift of
$\{d_n(K_1)\}$ majorizes $\{d_n(K_2)\}$. Let
$T\in\ml(\mh_1,\mh_2)$ be such that the dimension of the image of
the space $T(V_{K_1})$ in the quotient space $\mh_2/V_{K_2}$ is
$\le k$. We show that $T\in WG(K_1,K_2)$.
\medskip

Let $F$ be a finite subset of $\mh_2^*$ and $\my$ be a finite
subset of $\mh_1$. It suffices to show that there exists $D\in
G(K_1,K_2)$ satisfying $f(Dy)=f(Ty)$ for each $y\in\my$ and each
$f\in F$. With this in mind, we may assume that $F$ and $\my$ are
finite dimensional subspaces. Also, we may assume that $\my$ is a
subspace of $V_{K_1}$, because we may let the restriction of $D$
to the orthogonal complement of $V_{K_1}$ be the same as the
restriction of $T$.
\medskip

We decompose $F$ as $F_O\oplus F_V$, where $F_O=F\cap
V_{K_2}^\perp$. It is easy to check that the assumption on $T$
implies that $(T^*F_O)_\bot\cap V_{K_1}$ is of codimension at most
$k$ (if it is of codimension $\ge k+1$, then we can find $k+1$
vectors $x_i\in V_{K_1}$ and $k+1$ functionals $x_j^*$ in $F_O$
such that $T^*x_j^*(x_i)=\delta_{i,j}$, but then
$x^*_j(Tx_i)=\delta_{ij}$ shows that $\{Tx_i\}$ is a family of
$k+1$ vectors whose images in $\mh_2/V_{K_2}$ are linearly
independent, contrary to our assumption).
\medskip

Now we decompose $\my=\my_1\oplus\my_2$, where $\my_1= \my\cap
(T^*F_O)_\bot$. We let $D|_{\my_2}=T|_{\my_2}$.  Our next step is
to find a suitable definition of the restriction of $D$ to
$(T^*F_O)_\bot\cap V_{K_1}$. To this end we need the following
modification of Lemma \ref{L:FinCodEllips}, which can be proved
using the same argument and Remark \ref{R:operator}.

\begin{lemma}\label{L:4.5modif} Let $K_1$ and $K_2$ be
ellipsoids in Hilbert spaces $\mh_1$ and $\mh_2$, respectively.
Suppose that the $k^{th}$ left shift of
$\{d_n(K_1)\}_{n=0}^\infty$ majorizes $\{d_n(K_2)\}_{n=0}^\infty$
and that $K_0$ is the intersection of $K_1$ with a subspace of
$\mh_1$ of codimension $k$. Then there exists an operator
$B:V_{K_0}\to V_{K_2}$ such that $B(K_0)\supset K_2$ and $B$ is a
multiple of a bijective linear isometry of $V_{K_0}$ and
$V_{K_2}$.
\end{lemma}

Applying Lemma \ref{L:4.5modif} we find an operator
$B:((T^*F_O)_\bot\cap V_{K_1})\to V_{K_2}$ which satisfies
$B((T^*F_O)_\bot\cap K_1)\supset K_2$ and is a multiple of a
bijective isometry. Now we modify $B$ using Lemma \ref{konechn},
which we apply for $\mx=V_{K_2}$, $K=K_2$, $\my=B\my_1$,
$N=TB^{-1}|_{B\my_1}$, and $F$ (which is denoted in the same way
in this proof). We denote the operator obtained as a result of the
application of Lemma \ref{konechn} by $H$.
\medskip

We let $D|_{(T^*F_O)_\bot\cap V_{K_1}}=HB$. This formula defines
$D$ on $\my_1$, and this definition is such that
$D|_{\my_1}=T|_{\my_1}$. We extend $D$ to the rest of the space
$\mh_1$ arbitrarily.
\medskip

It is clear that $D$ satisfies all the assumptions. Thus $T\in
WG(K_1,K_2)$.
\medskip

Now we suppose that the $(k+1)^{th}$ left shift of $\{d_n(K_1)\}$
does not majorize $\{d_n(K_2)\}$ and show that if $T$ is an
operator for which $T(V_{K_1})$ contains $k+1$ vectors whose
images in the quotient space $\mh_2/V_{K_2}$ are linearly
independent, then $T\notin WG(K_1,K_2)$.
\medskip

Using the standard argument we find $v_1, \dots, v_{k+1}\in
V_{K_1}$, functionals $f_1,\dots,f_{k+1}\in\mh_2^*$,
  and $\ep>0$ such that
any $D\in\ml(\mh_1,\mh_2)$ satisfying $|f_j(Dv_i-Tv_i)|<\ep$,
$i,j=1,\dots,k+1$, satisfies the condition: $D(V_{K_1})$ contains
$k+1$ vectors whose images in $\mh_2/V_{K_2}$ are linearly
independent. It remains to show that such operators $D$ cannot
satisfy $DK_1\supset K_2$.
\medskip

In fact the condition about $k+1$ linearly independent vectors
implies that $D^{-1}(V_{K_2})\cap V_{K_1}$ is a subspace of
$V_{K_1}$ of codimension at least $k+1$.
\medskip

Therefore $K_2$ is covered by a section $K_0$ of $K_1$ of
codimension $k+1$. On the other hand, by Lemma \ref{L:codim1}, the
sequence of $n$-widths of $K_0$ is majorized by the $(k+1)^{th}$
left shift of $\{d_n(K_1)\}_{n=1}^\infty$. By Lemma \ref{L:cover},
we get a contradiction with our assumption.
\end{proof}

\begin{corollary}\label{C:non-lac} If $\{d_n(K_1)\}_{n=0}^\infty$ is non-lacunary and the condition
\eqref{Obolshoe} is satisfied, then $WG(K_1,K_2) =
\ml(\mh_1,\mh_2)$.
\end{corollary}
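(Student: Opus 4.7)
The plan is to reduce the claim to part (A) of Theorem \ref{two:ell}, which asserts that $WG(K_1,K_2) = \ml(\mh_1,\mh_2)$ whenever every left shift of $\{d_n(K_1)\}_{n=0}^\infty$ majorizes $\{d_n(K_2)\}_{n=0}^\infty$. Under our hypotheses, condition \eqref{Obolshoe} already tells us that $\{d_n(K_1)\}$ itself majorizes $\{d_n(K_2)\}$, so it suffices to verify that for every fixed integer $k\ge 0$ the $k^{th}$ left shift $\{d_{n+k}(K_1)\}_{n=0}^\infty$ majorizes the original sequence $\{d_n(K_1)\}_{n=0}^\infty$.

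The main observation is that non-lacunarity of $\{d_n(K_1)\}$ gives, by definition, a constant $c>0$ and an index $N$ with $d_{n+1}(K_1)\ge c\, d_n(K_1)$ for all $n\ge N$. Iterating this estimate $k$ times yields $d_{n+k}(K_1)\ge c^k d_n(K_1)$ for all $n\ge N$. Since the first $N$ terms can be absorbed into a slightly worse multiplicative constant (the ratios $d_n(K_1)/d_{n+k}(K_1)$ for $n<N$ form a finite set of positive numbers), we obtain a constant $C_k<\infty$ such that $d_n(K_1)\le C_k d_{n+k}(K_1)$ for every $n\ge 0$, which is precisely the claim that the $k^{th}$ left shift of $\{d_n(K_1)\}$ majorizes $\{d_n(K_1)\}$.

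Chaining this with the bound $d_n(K_2)\le C\, d_n(K_1)$ coming from \eqref{Obolshoe}, one gets $d_n(K_2)\le C\, C_k\, d_{n+k}(K_1)$ for all $n$ and all $k$. Thus the hypothesis of Theorem \ref{two:ell}(A) is fulfilled, and the conclusion $WG(K_1,K_2)=\ml(\mh_1,\mh_2)$ follows. There is no serious obstacle here; the only point that deserves care is the passage from the $\liminf$ formulation of non-lacunarity to a genuine uniform lower bound $d_{n+1}(K_1)/d_n(K_1)\ge c$ for all sufficiently large $n$, after which iteration and absorption of finitely many initial terms finish the argument.
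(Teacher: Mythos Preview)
Your proof is correct and follows exactly the same route as the paper: show that non-lacunarity forces every left shift of $\{d_n(K_1)\}$ to majorize $\{d_n(K_1)\}$ itself, combine with \eqref{Obolshoe}, and invoke Theorem~\ref{two:ell}(A). The paper states this in one line without spelling out the iteration and the handling of the finitely many initial terms, but your added detail is accurate.
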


In fact, if $\{d_n(K_1)\}_{n=0}^\infty$ is non-lacunary, it is
majorized by each of its left shifts, and hence the assumption of
Theorem \ref{two:ell}{\bf (A)} is satisfied.

\begin{remark} In the case where $\{d_n(K_1)\}_{n=0}^\infty$ is lacunary
both the situation described in Theorem {\rm\ref{two:ell}{\bf
(A)}} and the situation described in Theorem {\rm\ref{two:ell}{\bf
(B)}} can occur.
\end{remark}

Similarly to the case of one compact we introduce
$$\ma_{K_1,K_2}:=\{T\in\ml(\mh_1,\mh_2):~ TV_{K_1}\subset
V_{K_2}\}.$$ The following is a special case of Theorem
{\rm\ref{two:ell}{\bf (B)}} corresponding to the case $k=0$:

\begin{corollary}\label{twoquick}
Let $K_1,K_2$ be ellipsoids with
\begin{equation}\label{domin}
\liminf \frac{d_{n+1}(K_1)}{d_n(K_2)} = 0.
\end{equation}
Then $WG(K_1,K_2)=\ma_{K_1,K_2}$.
\end{corollary}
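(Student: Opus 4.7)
\medskip

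\noindent\textbf{Proof proposal.} The plan is to derive this corollary as a direct specialization of Theorem~\ref{two:ell}{\bf (B)} to the case $k=0$, so the work reduces to matching the two hypotheses of that theorem and reading off the conclusion.

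First I would record that Section~\ref{S:2ell} operates under the standing assumption \eqref{Obolshoe}, which says $d_n(K_2)=O(d_n(K_1))$; this is precisely the statement that the $0^{th}$ left shift of $\{d_n(K_1)\}_{n=0}^\infty$ (that is, the sequence itself) majorizes $\{d_n(K_2)\}_{n=0}^\infty$. Thus the first half of the hypothesis of Theorem~\ref{two:ell}{\bf (B)} with $k=0$ is already in place.

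Next I would translate \eqref{domin} into the non-majorization language used in Theorem~\ref{two:ell}. The $1^{st}$ left shift of $\{d_n(K_1)\}$ is $\{d_{n+1}(K_1)\}_{n=0}^\infty$, and it fails to majorize $\{d_n(K_2)\}_{n=0}^\infty$ if and only if there is no finite $C$ for which $d_n(K_2)\le C\,d_{n+1}(K_1)$ holds for every $n$. Since $K_1$ and $K_2$ are infinite-dimensional ellipsoids, all terms involved are strictly positive, so this negation is equivalent to $\limsup_n d_n(K_2)/d_{n+1}(K_1)=\infty$, which in turn is equivalent to $\liminf_n d_{n+1}(K_1)/d_n(K_2)=0$, namely \eqref{domin}. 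With both hypotheses verified, Theorem~\ref{two:ell}{\bf (B)} applies with $k=0$.

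Reading off the conclusion for $k=0$: $WG(K_1,K_2)$ consists of those $T\in\ml(\mh_1,\mh_2)$ whose image $T(V_{K_1})$ in the quotient $\mh_2/V_{K_2}$ is at most $0$-dimensional, i.e., is the zero subspace. Equivalently, $T(V_{K_1})\subset V_{K_2}$, which is exactly the defining condition of $\ma_{K_1,K_2}$. Thus $WG(K_1,K_2)=\ma_{K_1,K_2}$. There is essentially no further obstacle beyond the elementary equivalence of the $\liminf$ formulation with failure of majorization; all the substantive content already sits inside Theorem~\ref{two:ell}.
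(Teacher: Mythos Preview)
Your proposal is correct and matches the paper's approach exactly: the paper simply declares this corollary to be the special case $k=0$ of Theorem~\ref{two:ell}{\bf (B)}, and you have spelled out the (routine) verification that the standing assumption \eqref{Obolshoe} gives majorization by the $0^{th}$ left shift while \eqref{domin} is equivalent to failure of majorization by the $1^{st}$ left shift.
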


\begin{remark}  Note that the combination of the assumption \eqref{Obolshoe} and the condition \eqref{domin}
imply that the sequences $\{d_n(K_1)\}_{n=1}^\infty$ and
$\{d_n(K_2)\}_{n=1}^\infty$ are both lacunary. Indeed, $d_k(K_2)
\le C d_k(K_1)$ implies
\[\frac{d_{n+1}(K_1)}{d_n(K_2)}\ge \frac{d_{n+1}(K_1)}{Cd_n(K_1)}\hbox{ and }
\frac{d_{n+1}(K_1)}{d_n(K_2)}\ge \frac{d_{n+1}(K_2)}{Cd_n(K_2)}.\]
Therefore \eqref{domin} implies that $\{d_n(K_1)\}_{n=1}^\infty$
and $\{d_n(K_2)\}_{n=1}^\infty$ are lacunary.
\end{remark}

Analysis of all possible cases in Theorem \ref{two:ell} implies
also the following:

\begin{corollary}\label{raznye} If $K_1$ and  $K_2$ are ellipsoids for which $V_{K_i}
= \mh_i$ for $i=1,2$, and \eqref{Obolshoe} is satisfied, then
$WG(K_1,K_2) = \ml(\mh_1,\mh_2)$.
\end{corollary}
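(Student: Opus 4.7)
The plan is to reduce Corollary \ref{raznye} to a direct case analysis based on Theorem \ref{two:ell}. The key is that under \eqref{Obolshoe}, cases \textbf{(A)} and \textbf{(B)} of Theorem \ref{two:ell} exhaust all possibilities, and the hypothesis $V_{K_2}=\mh_2$ trivializes the description of $WG(K_1,K_2)$ obtained in case \textbf{(B)}.

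First I would verify that cases \textbf{(A)} and \textbf{(B)} of Theorem \ref{two:ell} cover every situation admitted by \eqref{Obolshoe}. Since $\{d_n(K_1)\}_{n=0}^\infty$ is non-increasing, the $k^{th}$ left shift $\{d_{n+k}(K_1)\}_{n=0}^\infty$ dominates the $(k+1)^{th}$ left shift pointwise; consequently, if the $(k+1)^{th}$ left shift majorizes $\{d_n(K_2)\}$, then so does the $k^{th}$. Hence the set $S\subseteq\mathbb{N}\cup\{0\}$ of indices for which majorization holds is closed under taking smaller elements. Hypothesis \eqref{Obolshoe} says exactly that $0\in S$, so $S$ is either all of $\mathbb{N}\cup\{0\}$ or an initial segment $\{0,1,\dots,k\}$ for some finite $k\ge 0$. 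These are precisely the hypotheses of Theorem \ref{two:ell}\textbf{(A)} and Theorem \ref{two:ell}\textbf{(B)}, respectively.

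In case \textbf{(A)}, Theorem \ref{two:ell}\textbf{(A)} directly gives $WG(K_1,K_2)=\ml(\mh_1,\mh_2)$. In case \textbf{(B)}, Theorem \ref{two:ell}\textbf{(B)} identifies $WG(K_1,K_2)$ with the collection of operators $T\in\ml(\mh_1,\mh_2)$ whose image $T(V_{K_1})$ has at most $k$-dimensional image in the quotient space $\mh_2/V_{K_2}$. Since $V_{K_2}=\mh_2$ by hypothesis, this quotient is the zero space, so the dimensional constraint is vacuously satisfied by every $T\in\ml(\mh_1,\mh_2)$, and again $WG(K_1,K_2)=\ml(\mh_1,\mh_2)$.

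There is no substantial difficulty beyond bookkeeping; the only non-routine step is the monotonicity observation in the second paragraph that makes cases \textbf{(A)} and \textbf{(B)} jointly exhaustive. It is also worth recording that the hypothesis $V_{K_1}=\mh_1$ is not actually used in this argument—only $V_{K_2}=\mh_2$ is needed, via the collapse of the quotient $\mh_2/V_{K_2}$ in case \textbf{(B)}.
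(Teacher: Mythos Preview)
Your argument is correct and follows exactly the approach the paper indicates: the paper simply states that the corollary follows from ``analysis of all possible cases in Theorem \ref{two:ell},'' and you have spelled out that case analysis accurately, including the monotonicity observation that makes cases \textbf{(A)} and \textbf{(B)} exhaustive. Your remark that only the hypothesis $V_{K_2}=\mh_2$ is actually used (via the collapse of $\mh_2/V_{K_2}$) is a valid sharpening not made explicit in the paper.
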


\section{Covering with compact operators}

Here we discuss the problem of covering an ellipsoid $K_2$ by the image of an ellipsoid $K_1$ via a compact
operator. Let $CG(K_1,K_2)$ be the set of all compact operators $T$ satisfying the condition $TK_1\supset K_2$.

Let us begin with an analogue of Lemma \ref{L:cover}.

Note that the widths $d_n(K)$ of a compact subset $K$ in a Banach
space $\mx$ can change if we consider $K$ as a subset of a
subspace $\my\subset\mx$ that contains $K$. Let us denote by
$\tilde{d}_n(K)$ the $n$-width of $K$ considered as a subset of
$V_K$ (recall that $V_K=\overline{\lin K}$, so we choose the
minimal subspace and obtain maximal widths).

\begin{lemma}\label{comp-comp}
Let $\mx$ and $\my$ be Banach spaces, $K$ be a compact set in
$\mx$ and $T:\mx\to\my$ be a compact operator. Then
$\tilde{d}_n(TK)/\tilde{d}_n(K)\to 0$ as $n\to\infty$.
\end{lemma}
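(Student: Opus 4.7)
The plan is to combine a near-optimal Kolmogorov approximation of $K$ with compactness of $T$ to produce a good approximation of $TK$, carefully keeping the approximating subspaces inside $V_{TK}$. First I would reduce to the case $\mx=V_K$ and $\my=V_{TK}$ by restricting $T$ to $V_K$; this is legitimate because $V_{TK}=\overline{T(V_K)}$ (which follows from continuity of $T$ applied to $V_K=\overline{\lin K}$), and after the reduction $\tilde d_n$ coincides with the ambient Kolmogorov width in each of the two spaces.

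Next, compactness of $T$ gives that $\overline{T(B_\mx)}$ is a compact subset of $\my$, so its Kolmogorov widths $\beta_k:=d_k(\overline{T(B_\mx)})$ tend to zero. For each $n$ I would choose a near-optimal $n$-dimensional subspace $L_n\subset\mx$ with $\sup_{x\in K}\dist(x,L_n)\le 2\tilde d_n(K)$; a small perturbation of a basis places $L_n$ inside $\lin K$, so that automatically $T(L_n)\subset V_{TK}$. Choosing a $k$-dimensional $F_k\subset V_{TK}$ that approximates $\overline{T(B_\mx)}$ within $2\beta_k$ and decomposing each $x\in K$ as $x=\ell+v$ with $\ell\in L_n$ and $\|v\|\le 2\tilde d_n(K)$, I would obtain $Tx\in T(L_n)+F_k+4\tilde d_n(K)\beta_k B_\my$ with $T(L_n)+F_k\subset V_{TK}$ of dimension at most $n+k$. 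This yields the basic estimate
\[
\tilde d_{n+k}(TK)\le 4\,\tilde d_n(K)\,\beta_k.
\]

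For $\varepsilon>0$, choosing $k$ so that $4\beta_k<\varepsilon$ and writing $N=n+k$ gives $\tilde d_N(TK)/\tilde d_N(K)\le\varepsilon\cdot\tilde d_{N-k}(K)/\tilde d_N(K)$. When $\{\tilde d_n(K)\}$ is non-lacunary, the quotient $\tilde d_{N-k}/\tilde d_N$ is bounded in $N$ (for fixed $k$) by a constant depending only on $k$, so letting $\varepsilon\downarrow 0$ while $k$ grows correspondingly completes the argument in this regime.

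The main obstacle is the lacunary regime, in which $\tilde d_{N-k}(K)/\tilde d_N(K)$ can be unbounded in $N$ for every fixed $k\ge 1$, so the crude bound above is not sharp enough. To cover this case I would replace the bulk set $\overline{T(B_\mx)}$ by the smaller compact ``residual set'' $W_n:=\overline{T\{x-\ell_n(x):x\in K\}}$, which is compact as the continuous image of $K$ and sits inside $V_{TK}$ as well as inside $2\tilde d_n(K)\,\overline{T(B_\mx)}$; arguing directly with $W_n$ gives $\tilde d_{n+k}(TK)\le 2\tilde d_k(W_n)$, and a careful choice of $k=k(n)$ growing with $n$ produces the required ratio $\tilde d_{n+k(n)}(TK)/\tilde d_{n+k(n)}(K)\to 0$. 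The joint use of compactness of both $K$ and $T$, rather than their separate consequences, is the key new ingredient.
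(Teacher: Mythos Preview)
Your basic inequality $\tilde d_{n+k}(TK)\le 4\,\tilde d_n(K)\,\beta_k$ is correct, but it is too weak to yield the lemma, and the gap appears already in your non-lacunary argument. Take $\mx=\my=\ell_2$, let $K$ be the ellipsoid with semi-axes $2^{-m}$ (so $\tilde d_n(K)=2^{-(n+1)}$, certainly non-lacunary), and let $T$ be the diagonal operator with entries $1/m$ (so $\beta_k=1/(k+1)$). Then $\tilde d_{N-k}(K)/\tilde d_N(K)=2^k$ for every $N$, and your bound gives only
\[
\limsup_{N\to\infty}\frac{\tilde d_N(TK)}{\tilde d_N(K)}\;\le\;\inf_{k\ge 0}\,\frac{4\cdot 2^k}{k+1}\;=\;4,
\]
whereas the true ratio is $1/(N+1)\to 0$. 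The phrase ``letting $\varepsilon\downarrow 0$ while $k$ grows correspondingly'' conceals the fact that the constant $C_k$ bounding $\tilde d_{N-k}/\tilde d_N$ may grow faster than $\beta_k$ decays; in general there is no reason for $\beta_k C_k\to 0$. Your lacunary paragraph is still less complete: the assertion that ``a careful choice of $k=k(n)$ produces the required ratio'' is not substantiated, and the only estimate on $\tilde d_k(W_n)$ you actually possess is $2\tilde d_n(K)\beta_k$, which returns you to the same inadequate inequality.

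The paper avoids the dimension shift altogether. After reducing to $\mx=V_K$, the key observation is that the \emph{same} subspace $T\mx_n$ arising from a near-optimal approximation of $K$ also absorbs $TB_\mx$: since $B_\mx=B_{V_K}$, every point of the compact set $\overline{TB_\mx}$ lies in $\overline{\lin(TK)}$ and hence, for all large $n$, is within any prescribed $\delta$ of $T\mx_n$. One obtains $TB_\mx\subset T\mx_n+\delta B_\my$ for all sufficiently large $n$, and combining this with $TK\subset T\mx_n + (1+\varepsilon)\tilde d_n(K)\,TB_\mx$ gives directly $\tilde d_n(TK)\le (1+\varepsilon)\delta\,\tilde d_n(K)$ with no index shift. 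The idea you are missing is to exploit compactness of $T$ \emph{relative to} the approximating subspaces $\mx_n$ rather than through an independent finite-dimensional piece $F_k$.
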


\begin{proof} We may assume that $\mx = V_K$. By the definition of $\tilde{d}_n$, for each $n\in\mathbb{N}\cup\{0\}$
and $0<\ep<1$, there exists an $n$-dimensional subspace
$\mx_n\subset\mx$ such that
\begin{equation}\label{E:almostApprox} K\subset \mx_n+d_n(K)(1+\ep)B_\mx.\end{equation}

Therefore \begin{equation}\label{E:T_approx}TK\subset
T\mx_n+d_n(K)(1+\ep)TB_\mx.\end{equation}

Now we show that for each $\delta>0$ there is $N\in\mathbb{N}$ such that
\begin{equation}\label{E:delAppr}TB_\mx\subset T\mx_n+\delta B_\my\hbox{ for }n\ge N.\end{equation}

In fact, since $\overline{TB_\mx}$ is compact, it has a finite
$\delta/3$-net $\{y_i\}_{i=1}^t\subset TB_\mx$. Since
$TB_\mx\subset TV_K$, the vectors $y_i$ can be arbitrarily well
approximated by linear combinations of vectors from $TK$. Let $M$
be the maximum absolute sum of coefficients of a selection of such
$\delta/3$-approximating linear combinations. Let $N$ be such that
for $n\ge N$ we have $d_n(K)\le\frac{\delta}{6M||T||}$, and let us
show that \eqref{E:delAppr} holds.

We need to show that for all $y\in TB_\mx$ we have
$\dist(y,T\mx_n)\le\delta$.

Let $j\in\{1,\dots,t\}$ be such that $||y-y_j||<\delta/3$, and let
$\sum_{i=1}^s \alpha_iTx_i$ be such that $x_i\in K$,
$\sum_{i=1}^s|\alpha_i|\le M$, and $||y_j-\sum_{i=1}^s
\alpha_iTx_i||<\delta/3$ . By \eqref{E:almostApprox}, we have
$\dist(x_i,\mx_n)\le d_n(K)(1+\ep)$. Therefore $\dist(\sum_{i=1}^s
\alpha_iTx_i,
T\mx_n)\le\sum_{i=1}^s|\alpha_i|||T||d_n(K)(1+\ep)\le
M||T||\cdot\frac{\delta}{6M||T||}\cdot(1+\ep)<\frac{\delta}3$.
Thus $\dist(y,T\mx_n)<\delta$.

If we combine \eqref{E:T_approx} and \eqref{E:delAppr} we get $d_n(TK)\le(1+\ep)\delta d_n(K)$ for $n\ge N$.
Since $0<\ep<1$ and $\delta>0$ can be chosen arbitrarily, the statement follows.
\end{proof}

\begin{remark}\label{comp-hil}
Note that if $\mx$ is a Hilbert space, then $\tilde{d}_n(K) =
d_n(K)$. Indeed, in this case we may assume that $\mx_n\subset
V_K$. Such subspace can be found as the orthogonal projection to
$V_K$ of any subspace $\mx_n$ satisfying \eqref{E:almostApprox}.
It should be mentioned that in the Hilbert space case a simpler
proof of Lemma {\rm\ref{comp-comp}} is known, see {\rm\cite[Lemma
1]{FR83}}.
\end{remark}

Now we find criteria of non-emptiness of $CG(K_1,K_2)$ for
ellipsoids  $K_1$ and $K_2$. The result can be considered as an
analogue of Lemmas \ref{L:cover} and \ref{conver:cover}.

\begin{lemma} Let $K_1$ and $K_2$ be ellipsoids in Hilbert spaces $\mh_1$ and $\mh_2$, respectively.
There is a compact operator $T$ satisfying $TK_1\supset K_2$ if
and only if
\begin{equation}\label{o-maloe}
d_n(K_2) = o(d_n(K_1)).
\end{equation}

\end{lemma}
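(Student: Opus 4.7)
\medskip

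\noindent\textbf{Proof proposal.} The plan is to prove the two implications separately, leaning on Lemma \ref{comp-comp} (together with Remark \ref{comp-hil}) for necessity and on the Schmidt expansion for sufficiency.

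For the forward direction, suppose $T$ is a compact operator with $TK_1\supset K_2$. Monotonicity of $d_n$ under inclusion (or, equivalently, Lemma \ref{L:cover} applied with the identity operator on $\mh_2$) yields $d_n(K_2)\le d_n(TK_1)$. Applying Lemma \ref{comp-comp} to the compact operator $T$ and the compact set $K_1$ gives $\tilde{d}_n(TK_1)/\tilde{d}_n(K_1)\to 0$, and Remark \ref{comp-hil} identifies $\tilde{d}_n$ with $d_n$ in the Hilbert space setting. Combining these facts yields $d_n(K_2)=o(d_n(K_1))$.

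For the converse, assume $d_n(K_2)=o(d_n(K_1))$ and construct $T$ explicitly through the Schmidt representation used in Lemma \ref{conver:cover}. Writing $K_i=A_i(B_{\mh_i})$ for compact operators $A_i$, we obtain orthonormal sequences $\{h_n\}$ in $V_{K_1}$ and $\{g_n\}$ in $V_{K_2}$ together with the descriptions
\[K_1=\left\{\sum_{n=1}^\infty \alpha_n s_n(A_1) h_n:\{\alpha_n\}\in\ell_2\right\},\qquad K_2=\left\{\sum_{n=1}^\infty \alpha_n s_n(A_2) g_n:\{\alpha_n\}\in\ell_2\right\}.\]
Define $T:\mh_1\to\mh_2$ by $Th_n=\lambda_n g_n$ with $\lambda_n=s_n(A_2)/s_n(A_1)$, extended by zero on $V_{K_1}^\perp$. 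Since $s_n(A_i)=d_{n-1}(K_i)$, the hypothesis forces $\lambda_n\to 0$, so $T$ is compact (being the norm limit of its finite-rank truncations). A direct expansion shows $TK_1=K_2$, and in particular $T\in CG(K_1,K_2)$.

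No step presents a serious obstacle; the argument is essentially bookkeeping once Lemma \ref{comp-comp}, Remark \ref{comp-hil}, and the Schmidt expansion are in hand. The only minor care needed is to extend $T$ by zero on $V_{K_1}^\perp$, which both preserves compactness and prevents $TK_1$ from being enlarged beyond what the Schmidt formula produces.
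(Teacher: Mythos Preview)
Your proof is correct and follows essentially the same approach as the paper: the forward direction via Lemma~\ref{comp-comp} and Remark~\ref{comp-hil}, and the converse by explicitly writing out the Schmidt-expansion construction that the paper invokes by reference to Lemma~\ref{conver:cover}, observing that the diagonal coefficients $\lambda_n=s_n(A_2)/s_n(A_1)\to 0$ force compactness of the resulting operator. One small aside: since $K_1\subset V_{K_1}$, extending $T$ by zero on $V_{K_1}^\perp$ is needed only for compactness (it has no effect on $TK_1$), but this does no harm.
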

\begin{proof}
If there is a compact operator $T$ with $TK_1\supset K_2$ then
(\ref{o-maloe}) follows from Lemma \ref{comp-comp} and Remark
\ref{comp-hil}. Conversely, if (\ref{o-maloe}) holds, then the
existence of a {\it compact} operator $T$ follows from the
argument of Lemma \ref{conver:cover}.
\end{proof}

If the condition (\ref{o-maloe}) is satisfied we say: the sequence
$\{d_n(K_1)\}_{n=1}^\infty$ {\it strictly majorizes}
$\{d_n(K_2)\}_{n=1}^\infty$.

Let us define by $WCG(K_1,K_2)$ the \WOT-closure of $CG(K_1,K_2)$.
We have the following analogue of Theorem \ref{two:ell}:

\begin{theorem}\label{two:ell:comp} {\bf (A)} If all left shifts of the sequence $\{d_n(K_1)\}$
strictly majorize the sequence $\{d_n(K_2)\}$, then $WCG(K_1,K_2)=\ml(\mh_1,\mh_2)$.
\medskip

\noindent{\bf (B)} If the $k^{th}$ left shift of $\{d_n(K_1)\}$ strictly majorizes the sequence $\{d_n(K_2)\}$,
but the $(k+1)^{th}$ left shift does not (such cases are clearly possible), then $WCG(K_1,K_2)$ is the set of
those operators $T\in\ml(\mh_1,\mh_2)$ for which the image of the space $T(V_{K_1})$ in the quotient space
$\mh_2/V_{K_2}$ is at most $k$-dimensional.
\end{theorem}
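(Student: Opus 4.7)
My proof will run in close parallel with that of Theorem~\ref{two:ell}, with two systematic substitutions: strict majorization ($o(\cdot)$) replaces ordinary majorization ($O(\cdot)$), and in the converse-type arguments Lemma~\ref{comp-comp} together with Remark~\ref{comp-hil} takes the place of Lemma~\ref{L:cover}. The recurring new issue is to verify that the operators produced stay compact and that every composition is compact as well.

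\textbf{Part (A).} I will show that for each finite-dimensional $\my\subset\mh_1$ and each $N:\my\to\mh_2$ there is a \emph{compact} $D\in CG(K_1,K_2)$ with $D|_\my=N$; this gives SOT-density of $CG(K_1,K_2)$ in $\ml(\mh_1,\mh_2)$, hence WOT-density. Let $K_0=K_1\cap\my^\perp$; the argument of Lemma~\ref{L:FinCodEllips} gives $d_n(K_0)\ge d_{n+\dim\my}(K_1)$, and by hypothesis the $(\dim\my)^{th}$ left shift of $\{d_n(K_1)\}$ strictly majorizes $\{d_n(K_2)\}$, so $d_n(K_2)=o(d_n(K_0))$. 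The non-emptiness criterion for compact coverings proved just before the theorem then supplies a compact $Y:V_{K_0}\to V_{K_2}$ with $Y(K_0)\supset K_2$; extending $Y$ by $0$ to $\my^\perp$ and setting $D|_\my=N$ produces the desired $D$, compact as a finite-rank plus compact operator.

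\textbf{Part (B), inclusion $\supset$.} Given $T$ whose image in $\mh_2/V_{K_2}$ has dimension $\le k$, I will copy the construction of the operator $D$ from the proof of Theorem~\ref{two:ell}{\bf (B)}, with one essential modification: replace the non-compact operator $B$ produced by Lemma~\ref{L:4.5modif} by a compact one. This is possible because the relevant codimension-$k$ section $K_0$ of $K_1$ satisfies $d_n(K_0)\ge d_{n+k}(K_1)$ and the $k^{th}$ left shift strictly majorizes $\{d_n(K_2)\}$ by hypothesis; the non-emptiness criterion then yields a compact $B:V_{K_0}\to V_{K_2}$ with $B(K_0)\supset K_2$. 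All other pieces of $D$ either act on finite-dimensional subspaces (the $\my_2$ and the finite-dimensional complement of $(T^*F_O)_\bot\cap V_{K_1}$ in $V_{K_1}$), or are the composition $HB$ of the bounded operator $H$ from Lemma~\ref{konechn} with the compact $B$; and I will extend $D$ by $0$ on $V_{K_1}^\perp$ (instead of arbitrarily) to preserve compactness.

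\textbf{Part (B), inclusion $\subset$ --- the main obstacle.} The hardest step is this converse direction, where strict majorization must be \emph{forced} by compactness. Assume $T\in WCG(K_1,K_2)$ but $T(V_{K_1})$ contains $k+1$ vectors linearly independent modulo $V_{K_2}$. Copying the standard argument in Theorem~\ref{two:ell}{\bf (B)}, I will choose $v_1,\dots,v_{k+1}\in V_{K_1}$, $f_1,\dots,f_{k+1}\in\mh_2^*$ and $\ep>0$ so that any $D$ close enough to $T$ in this test data has $D^{-1}(V_{K_2})\cap V_{K_1}$ of codimension $\ge k+1$ in $V_{K_1}$; pick such a $D\in CG(K_1,K_2)$. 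Letting $K_0=K_1\cap D^{-1}(V_{K_2})\cap V_{K_1}$, we have $K_2\subset D(K_0)$. The crucial combination is: compactness of $D$ plus Lemma~\ref{comp-comp} and Remark~\ref{comp-hil} give $d_n(K_2)=o(d_n(K_0))$, while $k+1$ iterations of Lemma~\ref{L:codim1} give $d_n(K_0)\le Cd_{n+k+1}(K_1)$; together these yield $d_n(K_2)=o(d_{n+k+1}(K_1))$, contradicting the assumption that the $(k+1)^{th}$ left shift does not strictly majorize $\{d_n(K_2)\}$.
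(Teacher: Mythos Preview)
Your proposal is correct and is precisely the ``straightforward modification of the proof of Theorem~\ref{two:ell}'' that the paper alludes to but omits: you replace majorization by strict majorization, invoke Lemma~\ref{comp-comp} (with Remark~\ref{comp-hil}) in place of Lemma~\ref{L:cover} for the converse inclusion, and track compactness of the constructed $D$. One small point to tighten in Part~{\bf (B)}\,$\supset$: the original proof reduces to $\my\subset V_{K_1}$ by setting $D|_{V_{K_1}^\perp}=T|_{V_{K_1}^\perp}$, so simply extending by $0$ there would spoil the \WOT-approximation for test vectors with a $V_{K_1}^\perp$-component; instead set $D$ equal to $T$ on the finite-dimensional space $P_{V_{K_1}^\perp}\my$ and $0$ on its orthogonal complement in $V_{K_1}^\perp$, which is still finite rank and keeps $D$ compact.
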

The proof is a straightforward modification of the proof of
Theorem \ref{two:ell} and we omit it.

\section{Operator ranges}

In this section by a Hilbert space we mean a separable infinite
dimensional Hilbert space. An operator range is the image of a
Hilbert space $\mh_1$ under a bounded operator $A: \mh_1\to
\mh_2$. Operator ranges are actively studied, see \cite{COS95},
\cite{FW71}, \cite{HV06}, \cite{Lon03}, and references therein.
The purpose of this section is to use the results of the previous
section to classify operator ranges. Our results complement the
classification of operator ranges presented in \cite[Section
2]{FW71}.
\medskip

We restrict our attention to images of {\it compact operators of
infinite rank}. The set $A(B_{\mh_1})$ will be called a {\it
generating ellipsoid} of the operator range $A\mh_1$. The same
operator range is the image of infinitely many different
operators, therefore a generating ellipsoid of an operator range
is not uniquely determined. However, the Baire category theorem
implies that if $K_1$ and $K_2$ are generating ellipsoids of the
same operator range, then $cK_1\subset K_2\subset CK_1$ for some
$0<c\le C<\infty$.
\medskip

We say that two sequences of positive numbers are {\it equivalent}
if each of them majorizes the other. The observation above implies
that the equivalence class of the sequence of $n$-widths
$\{d_n(K)\}_{n=0}^\infty$ of a generating ellipsoid of $\my$ is
uniquely determined by an operator range $\my$. We denote this
equivalence class of sequences by $d(\my)$.
\medskip

It is clear that a sequence is lacunary if and only if all of
sequences equivalent to it are lacunary. It is also clear that
left shifts of equivalence classes of sequences are well-defined
as well as the conditions like {\it $d(\my_1)$ majorizes
$d(\my_2)$}. Therefore the following notions are well-defined for
operator ranges: (i) $\my$ is {\it lacunary}; (ii) $\my_1$ {\it
majorizes} $\my_2$. We say that an operator range $\my\subset\mh$
is {\it dense} if $\overline{\my}=\mh$.
\medskip

Results of Section \ref{S:2ell} on covering of one ellipsoid by
another have immediate corollaries for operator ranges. Let
$A_1:\mh\to\mh_1$ and $A_2:\mh\to\mh_2$ be compact operators of
infinite rank and $\my_i = A_i\mh$. Let $\mathcal{R}(\my _1,\my
_2)$ denote the set of all operators $T$ satisfying
\begin{equation}\label{coverrange}
T\my _1 \supset \my _2.
\end{equation}
We write $\mathcal{R}(\my)$ instead of $\mathcal{R}(\my,\my)$. The
\WOT-closure of $\mr(\my_1,\my_2)$ will be denoted by
$\mw\mr(\my_1,\my_2)$.

\begin{corollary}\label{C:OR} Suppose that $\my_1$ majorizes $\my_2$. Then

\begin{itemize}

\item[{\bf (i)}] If all left shifts of $d(\my_1)$ majorize
$d(\my_2)$, then $\mw\mr(\my_1,\my_2)=\ml(\mh_1,\mh_2)$.

\item[{\bf (ii)}] Let $k$ be a non-negative integer. If the
$k^{th}$ left shift of $d(\my_1)$ majorizes $d(\my_2)$, but the
$(k+1)^{th}$ left shift does not, then $\mw\mr(\my_1,\my_2)$ is
the set of those operators $T$ for which the image of $T\my_1$ in
the quotient space $\mh_2/\overline{\my_2}$ has dimension $\le k$.
In particular, if $k=0$, we get: if the first left shift of
$d(\my_1)$ does not majorize $d(\my_2)$, then
$\mw\mathcal{R}(\my_1,\my_2){\overline{\my_1}}\subset
{\overline{\my_2}}$.

\item[{\bf (iii)}] If $\my_1$ is non-lacunary, then
$\mw\mr(\my_1,\my_2)=\ml(\mh_1,\mh_2)$.

\item[{\bf (iv)}] If $\my_1$ and $\my_2$ are dense, then
$\mw\mr(\my_1,\my_2)=\ml(\mh_1,\mh_2)$.

\end{itemize}
\end{corollary}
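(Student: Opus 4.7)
The plan is to reduce each assertion of Corollary \ref{C:OR} to the corresponding result of Section \ref{S:2ell}, applied to generating ellipsoids $K_i = A_i(B_\mh)$ of $\my_i$. Note that $V_{K_i} = \overline{A_i\mh} = \overline{\my_i}$ and $\{d_n(K_i)\} \in d(\my_i)$, so the standing hypothesis ``$\my_1$ majorizes $\my_2$'' is exactly condition \eqref{Obolshoe} for $K_1,K_2$.

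The bridge between operator-range and ellipsoid coverings is the equivalence: $T \in \mr(\my_1,\my_2)$ if and only if $cT \in G(K_1,K_2)$ for some $c > 0$. The implication ``$\Leftarrow$'' is immediate from $\my_i = \bigcup_n nK_i$. For ``$\Rightarrow$'', I would endow $\my_2$ with its range norm (making it a Hilbert space whose inclusion into $\mh_2$ is continuous) and write $\my_2 = \bigcup_{n=1}^\infty (T(nK_1) \cap \my_2)$; each term is closed in the range norm because $T(nK_1)$ is compact in $\mh_2$. Baire's theorem combined with convexity and symmetry of $T(nK_1)$ yields $rB \subset T(nK_1)$ for some $n,r > 0$, where $B$ denotes the closed unit ball of $\my_2$ in its range norm; since $K_2 \subset B$, this gives $K_2 \subset (n/r)TK_1$, i.e., $(r/n)T \in G(K_1,K_2)$. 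In particular $G(K_1,K_2) \subseteq \mr(\my_1,\my_2)$, so $WG(K_1,K_2) \subseteq \mw\mr(\my_1,\my_2)$.

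For parts \textbf{(i)}, \textbf{(iii)}, \textbf{(iv)}, the hypotheses translate directly into those of Theorem \ref{two:ell}(A), Corollary \ref{C:non-lac}, and Corollary \ref{raznye}, each of which gives $WG(K_1,K_2) = \ml(\mh_1,\mh_2)$; the bridge then upgrades this to $\mw\mr(\my_1,\my_2) = \ml(\mh_1,\mh_2)$. For part \textbf{(ii)}, let $S$ denote the set of $T$ for which the image of $T\my_1$ in $\mh_2/\overline{\my_2}$ has dimension at most $k$. Theorem \ref{two:ell}(B), via $V_{K_i} = \overline{\my_i}$, identifies $WG(K_1,K_2)$ with $S$, so $\mw\mr(\my_1,\my_2) \supseteq S$. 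Conversely, the bridge shows each $T \in \mr(\my_1,\my_2)$ satisfies $cT \in G(K_1,K_2) \subseteq S$ for some $c > 0$, and $S$ is invariant under nonzero scaling, so $\mr(\my_1,\my_2) \subseteq S$. The delicate step, where I anticipate the main technical work, is showing that $S$ is \WOT-closed. I would argue as follows: for any $v_1,\ldots,v_{k+1} \in \overline{\my_1}$, the map $T \mapsto (\pi Tv_1,\ldots,\pi Tv_{k+1})$ with $\pi: \mh_2 \to \mh_2/\overline{\my_2}$ is \WOT-to-(product weak) continuous because every element of $(\mh_2/\overline{\my_2})^*$ lifts to an element of $\mh_2^*$; and the set of linearly dependent $(k+1)$-tuples is closed in any Hausdorff TVS (a unit-sphere net of scalar witnesses has a cluster point by compactness, and joint continuity of TVS operations preserves linear dependence in the limit). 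Intersecting the preimages of this closed set over all choices of $v_1,\ldots,v_{k+1}$ exhibits $S$ as a \WOT-closed set, which yields $\mw\mr(\my_1,\my_2) \subseteq S$ and completes the proof.
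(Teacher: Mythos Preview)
Your proof is correct and follows the paper's strategy: reduce to Theorem~\ref{two:ell} and its corollaries via the two bridging observations $G(K_1,K_2)\subset\mr(\my_1,\my_2)$ and ``$T\in\mr(\my_1,\my_2)\Rightarrow cT\in G(K_1,K_2)$ for some $c>0$'' (the latter via Baire, exactly as the paper indicates just before the corollary). Parts \textbf{(i)}, \textbf{(iii)}, \textbf{(iv)} and the lower bound in \textbf{(ii)} are handled identically.

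The only organizational difference is in the upper bound of \textbf{(ii)}. The paper says ``use the same argument as in Section~\ref{S:2ell}'', meaning: for $T\notin S$ produce a \WOT-neighborhood in which every $D$ still has $k+1$ independent images in the quotient, and then derive a width contradiction from $cD\in G(K_1,K_2)$ via Lemmas~\ref{L:cover} and~\ref{L:codim1}. You instead prove directly that $S$ is \WOT-closed (preimage of the closed set of dependent $(k{+}1)$-tuples under a \WOT-to-weak continuous map) and combine this with $\mr(\my_1,\my_2)\subset S$. Both arguments are valid; yours is a clean repackaging that quotes the \emph{conclusion} $WG(K_1,K_2)=S$ of Theorem~\ref{two:ell}(B) rather than re-running its width estimates, at the cost of the extra topological lemma on dependent tuples. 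Your compactness argument for that lemma is fine (joint continuity of scalar multiplication in $\mathbb{C}$ lets you pass products $c_i^{(\alpha)}f(w_i^{(\alpha)})$ to the limit along the subnet).
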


\begin{proof} To derive {\bf (i)-(iv)} from Theorem \ref{two:ell} and its corollaries
we need two observations:

\begin{itemize}
\item $\mathcal{R}(\my _1,\my _2)$ contains $G(K_1,K_2)$ for any
pair of generating ellipsoids.

\item If $T\in \mathcal{R}(\my_1,\my_2)$ then $TK_1\supset K_2$
for some pair of generating ellipsoids.
\end{itemize}

The first observation immediately implies {\bf (i), (iii), (iv)},
and ``estimates from below'' in {\bf (iv)}. The second observation
shows that for ``estimates from above'' in {\bf (ii)} we can use
the same argument as in Section \ref{S:2ell}.
\end{proof}

One of the systematically studied objects in the theory of
invariant subspaces, see \cite{Foi72,LR86,NRRR77,NRRR79}, is the
algebra $\mathcal{A}(\mathcal{Y})$ of all operators that preserve
invariant a given operator range $\my$. It is known, see
\cite[Theorem 1]{NRRR79}, that if $\my$ is dense, then the
\WOT-closure $\mw\mathcal{A}(\mathcal{Y})$ of
$\mathcal{A}(\mathcal{Y})$ coincides with $\ml(\mh)$. It follows
easily that in general $\mw\mathcal{A}(\mathcal{Y})$ consists of
all operators that preserve the closure $\overline{\my}$ of $\my$.
\medskip

An operator algebra $\mathcal{A}$ is called {\it full} \,if it
contains the inverses of all invertible operators in
$\mathcal{A}$. We call $\mathcal{A}$ {\it weakly full} if for each
invertible operator $T\in \mathcal{A}$, the operator $T^{-1}$
belongs to the \WOT-closure of $\mathcal{A}$. Our next result
shows that for algebras of the form $\mathcal{A}(\mathcal{Y})$
this property depends on $d(\my)$.

\begin{corollary}\label{inverse}

\begin{itemize}
\item[{\bf (i)}]If the closure $\overline{\my}$ of an operator range $\my\subset\mh$ has finite codimension in $\mh$,
then the algebra $\mathcal{A}(\mathcal{Y})$ is weakly full.

\item[{\bf (ii)}] If $\my$ is not lacunary and
$\codim(\overline{\my}) = \infty$, then $\mathcal{A}(\mathcal{Y})$
is not weakly full.

\item[{\bf (iii)}] If $\my$ is lacunary, then $\mathcal{A}(\mathcal{Y})$ is weakly full.
\end{itemize}
\end{corollary}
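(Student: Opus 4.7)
The plan is to exploit the characterization stated immediately before the corollary, namely that $\mw\mathcal{A}(\my)$ coincides with the algebra of all operators leaving $\overline{\my}$ invariant. Whenever $T\in\mathcal{A}(\my)$ is invertible in $\ml(\mh)$, continuity already gives $T\overline{\my}\subset\overline{\my}$, so $T^{-1}\in\mw\mathcal{A}(\my)$ if and only if $T\overline{\my}=\overline{\my}$. Each part of the corollary will be reduced to forcing or violating this equality. For part (i), I argue by a dimension count: since $T$ is a bijection of $\mh$, it induces a linear isomorphism $\mh/\overline{\my}\cong\mh/T\overline{\my}$, so $\codim(T\overline{\my})=\codim(\overline{\my})<\infty$; combined with $T\overline{\my}\subset\overline{\my}$ this forces $T\overline{\my}=\overline{\my}$, and hence $T^{-1}\in\mw\mathcal{A}(\my)$.

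For part (iii), I would invoke Corollary \ref{C:OR}{\bf (ii)} with $\my_1=\my_2=\my$ and $k=0$. First observe that if $T\in\mathcal{A}(\my)$ is invertible, then applying $T^{-1}$ to both sides of $T\my\subset\my$ yields $\my\subset T^{-1}\my$, so $T^{-1}\in\mr(\my)\subset\mw\mr(\my)$. The lacunarity of $\my$ is precisely the statement that no constant $C$ satisfies $d_n(K)\le Cd_{n+1}(K)$, i.e.\ that the first left shift of $d(\my)$ does not majorize $d(\my)$. The corollary then gives $\mw\mr(\my)\overline{\my}\subset\overline{\my}$, hence $T^{-1}\overline{\my}\subset\overline{\my}$, which yields weak fullness.

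For part (ii), I have to exhibit an invertible $T\in\mathcal{A}(\my)$ with $T\overline{\my}\subsetneq\overline{\my}$. Write $\my=A\mh$ with $A$ compact of infinite rank and, using the Schmidt expansion, let $\{e_n\}_{n\ge 0}$ be an ONB of $\overline{\my}$ with corresponding singular values $s_n$, so that $\my=\{\sum_{n\ge 0}a_ne_n:\sum|a_n|^2/s_n^2<\infty\}$. Since $\codim(\overline{\my})=\infty$, the basis $\{e_n\}_{n\ge 0}$ extends to a two-sided ONB $\{e_n\}_{n\in\mathbb{Z}}$ of $\mh$, and I define the unitary $T$ by $Te_n=e_{n+1}$. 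Then $T\overline{\my}=\overline{\lin(\{e_n:n\ge 1\})}\subsetneq\overline{\my}$, so $T^{-1}\overline{\my}\not\subset\overline{\my}$. A direct SVD computation shows that $T\my\subset\my$ is equivalent to the implication $\sum|b_n|^2/s_{n-1}^2<\infty\Rightarrow\sum|b_n|^2/s_n^2<\infty$, which holds precisely when $s_n/s_{n-1}$ is bounded below; since $d_{n-1}(K)=s_n$ for $K=AB_\mh$, this is exactly the non-lacunarity of $\my$.

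I expect the main obstacle to lie in this last SVD verification: one must ensure the two-sided shift preserves the operator range $\my$ itself rather than merely its closure, and the non-lacunarity hypothesis enters precisely at this point. Parts (i) and (iii) reduce essentially mechanically to the characterization of $\mw\mathcal{A}(\my)$ and to Corollary \ref{C:OR}, whereas (ii) requires the explicit construction and its calculational check.
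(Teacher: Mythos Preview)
Your arguments for parts {\bf (i)} and {\bf (iii)} are essentially identical to the paper's: the dimension count for {\bf (i)} and the reduction of {\bf (iii)} to Corollary~\ref{C:OR}{\bf (ii)} with $k=0$ via $T^{-1}\in\mr(\my)$ are exactly what the paper does.

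For part {\bf (ii)} your construction is correct but genuinely different from the paper's. The paper does not write down an explicit operator; instead it takes a generating ellipsoid $K$, picks $y\in\my\setminus\{0\}$, and invokes Lemma~\ref{L:FinCodEllips} (non-lacunarity gives $d_n(K\cap y^\perp)\asymp d_n(K)$) together with Remark~\ref{R:operator}/Observation~\ref{O:general} to obtain a multiple of an isometry $D:V_{K\cap y^\perp}\to V_K$ with $D(K\cap y^\perp)\supset K$, then extends $D$ to an invertible operator on $\mh$ using the infinite codimension of $\overline{\my}$. Since $D$ maps the hyperplane $\overline{\my}\cap y^\perp$ onto $\overline{\my}$, it must send $y$ outside $\overline{\my}$, so $D\notin\mw\mathcal{A}(\my)$, while $D(K_0)\supset K$ gives $D^{-1}\my\subset\my$, i.e.\ $D^{-1}\in\mathcal{A}(\my)$. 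Your bilateral shift on a two-sided extension of the Schmidt basis achieves the same thing by a direct computation: the non-lacunarity hypothesis enters as the uniform bound $s_{n}/s_{n-1}\ge\delta>0$ that makes $T\my\subset\my$, while $T\overline{\my}\subsetneq\overline{\my}$ is immediate from the shift structure. Your route is more elementary and self-contained (it avoids Lemma~\ref{L:FinCodEllips} and Observation~\ref{O:general}), whereas the paper's route illustrates how the result falls out of the width machinery already developed. The minor indexing slip in your identification $d_{n-1}(K)=s_n$ is harmless, since only the equivalence between non-lacunarity and a uniform lower bound on consecutive singular-value ratios is used.
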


\begin{proof} {\bf (i)} If $T$ preserves $\my$ then $T\overline{\my}\subset
\overline{\my}$. If $T$ is invertible, then it maps a complement
of $\overline{\my}$ onto a complement of $T(\overline{\my})$. If
$\overline{\my}$ has finite codimension, this implies
$T\overline{\my}=\overline{\my}$. Hence
$T^{-1}\overline{\my}=\overline{\my}$, and $T^{-1}$ is in the
\WOT-closure of $\ma(\my)$.
\medskip

\noindent{\bf (ii)} Let $K$ be a generating ellipsoid of $\my$.
Choose a nonzero vector $y\in\my$ and let $K_0 = K\cap y^{\bot}$.
By Lemma \ref{L:FinCodEllips}, the sequences $\{d_n(K)\}$ and
$\{d_n(K_0)\}$ are equivalent. Using Observation \ref{O:general}
we find an operator $D:V_{K_0}\to V_K$ which satisfies
$D(K_0)\supset K$ and is a (nonzero) multiple of an isometry.
Since $\overline{\my}$ has infinite codimension, we can extend $D$
to an invertible operator $D:\mh\to\mh$. Observe that
$D(\overline{\my}\cap y^\bot)=\overline{\my}$, therefore
$D(y)\notin\overline{\my}$, and thus $D\notin\mw\ma(\my)$. On the
other hand, the inclusion $D(K_0)\supset K$ implies
$D^{-1}\in\ma(\my)$.
\medskip

\noindent{\bf(iii)} If $T\in \mathcal{A}(\mathcal{Y})$ is
invertible, then $T^{-1}\in \mr(\my)$. Since $\my$ is lacunary,
applying Corollary \ref{C:OR} we conclude that $T^{-1}$ preserves
$\overline{\my}$. Therefore $T^{-1}\in
\mw\mathcal{A}(\mathcal{Y})$.
\end{proof}

\section{Bilinear operator equations}

One of the popular topics in operator theory is the study of
linear operator equations $XA = B$ and $AX = B$. We consider here
a ``bilinear operator equation"
\begin{equation}\label{bilin}
XAY = B,
\end{equation}
where operators $A,B$ are given. Its solution is a pair $(X,Y)$ of
operators. We denote the set of all such solutions by $\ms(A,B)$.
For simplicity we restrict our attention to the case when all
operators act on a fixed separable Hilbert space $\mh$. Such a
pair $(X,Y)$ can be found if we fix one of the operators (say $X$)
and solve the obtained linear equation (which has more than one
solution in the degenerate cases only). So the study of the
question ``how many solutions does equation (\ref{bilin}) have?''
reduces to the study of the set of all first components, that is,
the set of those $X$ for which $(X,Y)$ is a solution for some $Y$.
Let us denote this set by $U(A,B)$.

\begin{corollary}\label{eq}
{\rm (i)} The equation is solvable if and only if
\begin{equation}\label{neobh}
s_n(B) = O(s_n(A)).
\end{equation}

{\rm (ii)} Suppose that condition \eqref{neobh} holds. If
operators $A,B$ have dense ranges, or if the range of $A$ is
non-lacunary, then $U(A,B)$ is \WOT-dense in $\ml(\mh )$.

{\rm (iii)} If the range of operator $B$ is not dense and the condition
\begin{equation}\label{neobh-1}
s_n(B) = O(s_{n+1}(A))
\end{equation}
does not hold, then $U(A,B)$ is not \WOT-dense in $\ml(\mh)$.
\end{corollary}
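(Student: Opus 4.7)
The plan is to translate the operator equation $XAY=B$ into the ellipsoid-covering language of Sections 4--5 by setting $K_1 = A(B_\mh)$ and $K_2 = B(B_\mh)$. By the Gohberg--Krein identity $d_n(A(B_\mh)) = s_{n+1}(A)$, condition \eqref{neobh} is equivalent to $d_n(K_2) = O(d_n(K_1))$, i.e.\ precisely \eqref{Obolshoe} for the pair $(K_1, K_2)$, while the failure of \eqref{neobh-1} is equivalent to $\liminf d_{n+1}(K_1)/d_n(K_2) = 0$, which is precisely condition \eqref{domin}.

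The key reformulation uses Douglas's range-inclusion factorization: for a fixed $X \in \ml(\mh)$, the linear equation $(XA)Y = B$ admits a bounded solution $Y$ if and only if $B(B_\mh) \subset c \cdot XA(B_\mh)$ for some $c > 0$, i.e.\ $K_2 \subset c X K_1$, equivalently $cX \in G(K_1, K_2)$. Consequently
\[ U(A,B) \;=\; \bigcup_{c > 0} c^{-1}\, G(K_1, K_2) \;=\; \bigcup_{c > 0} G(K_1, cK_2). \]

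Part (i) then follows at once: $U(A,B) \ne \emptyset$ iff $G(K_1, cK_2) \ne \emptyset$ for some $c > 0$, which by Lemmas \ref{L:cover}--\ref{conver:cover} is equivalent to $d_n(K_2) = O(d_n(K_1))$, i.e.\ \eqref{neobh}. For Part (ii), the inclusion $G(K_1, K_2) \subset U(A,B)$ yields $\overline{U(A,B)}^{\WOT} \supset WG(K_1, K_2)$, and under either of the stated hypotheses an earlier result forces $WG(K_1, K_2) = \ml(\mh)$: Corollary \ref{C:non-lac} handles the non-lacunary case, and Corollary \ref{raznye} handles the case $V_{K_1} = V_{K_2} = \mh$.

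For Part (iii), assume $\overline{\im B} \ne \mh$, so $V_{K_2}$ is a proper closed subspace, and assume \eqref{neobh-1} fails. If \eqref{neobh} also fails, then $U(A,B) = \emptyset$ by (i) and we are done; otherwise condition \eqref{domin} holds for $(K_1, K_2)$, and is preserved under the scaling $K_2 \mapsto cK_2$, so by Corollary \ref{twoquick} we get $WG(K_1, cK_2) = \ma_{K_1, cK_2} = \ma_{K_1, K_2}$ for every $c > 0$. Since $\ma_{K_1, K_2}$ is \WOT-closed and $V_{K_2} \ne \mh$ forces it to be a proper subset of $\ml(\mh)$, we conclude $\overline{U(A,B)}^{\WOT} \subset \ma_{K_1, K_2} \subsetneq \ml(\mh)$. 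The main obstacle is the Douglas-style reformulation together with the scaling bookkeeping: verifying that $U(A,B)$ really coincides with $\bigcup_{c>0} G(K_1, cK_2)$ and that the conditions \eqref{Obolshoe} and \eqref{domin} feeding Corollaries \ref{C:non-lac}, \ref{raznye}, \ref{twoquick} are all invariant under scaling $K_2$, so that a single application of each corollary controls the entire union.
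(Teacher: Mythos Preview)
Your proposal is correct and follows essentially the same route as the paper. The paper's own proof is a single line: it observes that $X\in U(A,B)$ iff $XA\mh\supset B\mh$, i.e.\ $U(A,B)=\mr(A\mh,B\mh)$, and then invokes Corollary~\ref{C:OR} (the operator-range version of the ellipsoid results). The Douglas-type reformulation and the scaling bookkeeping $U(A,B)=\bigcup_{c>0}G(K_1,cK_2)$ that you carry out explicitly are precisely what is packaged into the passage from Theorem~\ref{two:ell} and its corollaries to Corollary~\ref{C:OR}; so you are reproducing that step inline rather than citing it.
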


\begin{proof}
Clearly $X\in U(A,B)$ if and only if the equation (\ref{bilin}) is solvable with respect to $Y$. This is
equivalent to the inclusion $XA\mh\supset B\mh$. It remains to apply Corollary \ref{C:OR}.
\end{proof}

If an operator $A$ is not compact then the set is \WOT-dense in
$\ml(\mh)$. Formally this is not a special case of Corollary
\ref{eq}(ii) because $s$-numbers are usually defined for compact
operators only, but the proof in this case along the same lines is
even simpler. In the rest of the section we prove that this result
can be considerably strengthened: if $A$ is not compact then
$\mathcal{S}(A,B)$ itself is dense in $\ml(\mh)\times \ml(\mh)$
with respect to the weak (and even strong) operator topology.

\begin{lemma}\label{inv}
If we are given two linearly independent families $(x_1,...,x_n)$,
$(y_1,...,y_m)$ of vectors in $\mh$, two arbitrary families
$(x_1^{\prime},...,x_n^{\prime})$,
$(y_1^{\prime},...,y_m^{\prime})$ of vectors in $\mh$, and a
number $\epsilon>0$, then there is an invertible operator $V$ with
the properties $\|Vx_i - x_i^{\prime}\| <\epsilon $, $ \|V^{-1}y_j
- y_j^{\prime}\| < \epsilon$ .
\end{lemma}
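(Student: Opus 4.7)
The plan is to construct $V$ by prescribing its action on a finite-dimensional subspace and then extending invertibly to all of $\mh$. I would choose two $(n+m)$-dimensional subspaces $F\ni x_i$ and $F'\ni y_j$ together with a linear bijection $V_0\colon F\to F'$ whose action approximates the required values on $x_i$ (in the forward direction) and on $y_j$ (in the backward direction). Since $F^\perp$ and $F'^\perp$ are both separable infinite-dimensional Hilbert spaces, any unitary $U\colon F^\perp\to F'^\perp$ will allow me to extend $V_0$ to an invertible operator on $\mh$.

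More precisely, I would first perturb the $y_j'$ slightly: using the infinite-dimensionality of $\mh$, one can choose $\tilde y_j'$ with $\|\tilde y_j'-y_j'\|<\epsilon$ such that $x_1,\dots,x_n,\tilde y_1',\dots,\tilde y_m'$ is a linearly independent family of $n+m$ vectors. This is done inductively, at each step avoiding a fixed finite-dimensional subspace by adding an arbitrarily small vector outside it. In the same way I would perturb the $x_i'$ to $\tilde x_i'$ with $\|\tilde x_i'-x_i'\|<\epsilon$ so that $\tilde x_1',\dots,\tilde x_n',y_1,\dots,y_m$ is linearly independent. Both steps rely only on the fact that in an infinite-dimensional Hilbert space a finite collection of vectors may be pushed off any prescribed finite-dimensional subspace by an arbitrarily small displacement.

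Now I would set $F=\lin\{x_1,\dots,x_n,\tilde y_1',\dots,\tilde y_m'\}$ and $F'=\lin\{\tilde x_1',\dots,\tilde x_n',y_1,\dots,y_m\}$, both of dimension $n+m$ by construction, and define the linear map $V_0\colon F\to F'$ by $V_0 x_i=\tilde x_i'$ and $V_0\tilde y_j'=y_j$. It is a bijection. Picking any unitary $U\colon F^\perp\to F'^\perp$, I would define $V\colon\mh\to\mh$ by $V(f+g)=V_0 f+Ug$ for $f\in F$, $g\in F^\perp$. With respect to the decompositions $\mh=F\oplus F^\perp=F'\oplus F'^\perp$, $V$ is bounded and has bounded inverse $V^{-1}(f'+g')=V_0^{-1}f'+U^{-1}g'$. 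Finally $Vx_i=\tilde x_i'$ and $V^{-1}y_j=\tilde y_j'$, so $\|Vx_i-x_i'\|<\epsilon$ and $\|V^{-1}y_j-y_j'\|<\epsilon$ hold by the choice of the perturbations.

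The only mild obstacle is the simultaneous bookkeeping of the two perturbations, i.e.\ arranging $\{x_i\}\cup\{\tilde y_j'\}$ and $\{\tilde x_i'\}\cup\{y_j\}$ to each be linearly independent families of $n+m$ vectors; everything else (the definition of $V_0$, the choice of $U$, and the verification of invertibility of the extension) is routine. I do not expect any further difficulty.
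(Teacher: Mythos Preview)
Your proposal is correct and follows essentially the same line as the paper's proof: perturb the target families so that $(x_1,\dots,x_n,\tilde y_1',\dots,\tilde y_m')$ and $(\tilde x_1',\dots,\tilde x_n',y_1,\dots,y_m)$ are each linearly independent, define the obvious bijection between their spans, and extend invertibly to all of $\mh$. The only cosmetic difference is in the extension step: the paper first enlarges $V_0$ to an invertible map on a single finite-dimensional subspace containing both spans and then takes the direct sum with the identity, whereas you pass directly from $F$ to $F'$ and complete with a unitary $F^\perp\to F'^\perp$; both devices work equally well here.
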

\begin{proof}
One can choose systems $z_1,...,z_n$ and $w_1,...,w_m$ close to
$(x_1^{\prime},...,x_n^{\prime})$ and, respectively,
$(y_1^{\prime},...,y_m^{\prime})$ in such a way that both systems
\[(x_1,...,x_n, w_1,...,w_m)\hbox{ and  }(y_1,...,y_m,z_1,...,z_n)\]
are linearly independent. Let us define an operator $T$ between
their linear spans by $Tx_i = z_i, Tw_j = y_j$. It is injective
and therefore can be extended to an invertible operator on a
finite dimensional space containing these systems. Clearly an
invertible operator on a finite-dimensional subspace can be
extended to an invertible operator on the whole space (take the
direct sum with the identity operator).
\end{proof}

We denote the group of all invertible operators on $\mh$ by
$\mathcal{G}(\mh)$. {\bf Note.} In this section $A^*$ denotes the
Hilbert space conjugate of an operator $A$.

\begin{lemma}\label{inv2}
If an operator $X$ has dense image and an operator $Y$ has trivial kernel, then the set
$$ {\Gamma}_{X,Y} = \{(XV^{-1},VY): V\in \mathcal{G}(H)\}$$
is dense in $\ml(\mh)\times \ml(\mh)$ with respect to the strong
operator topology {\rm(\SOT)}.
\end{lemma}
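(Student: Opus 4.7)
The plan is to unwind the definition of SOT directly: a basic SOT neighborhood of $(A,B)\in\ml(\mh)\times\ml(\mh)$ is specified by finite tuples $(x_1,\dots,x_n)$, $(y_1,\dots,y_m)$ and $\epsilon>0$, and the task is to produce $V\in\mathcal{G}(\mh)$ with $\|XV^{-1}x_i-Ax_i\|<\epsilon$ and $\|VYy_j-By_j\|<\epsilon$ for every $i,j$. After passing to a basis of the spans, I may assume each of the two tuples is linearly independent; this costs nothing because the resulting neighborhood is at worst slightly smaller.

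The next step exploits the two hypotheses. Because $X$ has dense range, I can pick vectors $z_1,\dots,z_n\in\mh$ with $\|Xz_i-Ax_i\|<\epsilon/2$; these $z_i$ absorb the first constraint provided $V^{-1}x_i$ is sufficiently close to $z_i$ (using boundedness of $X$ to transport the approximation). Because $Y$ has trivial kernel, the vectors $w_j:=Yy_j$ form a linearly independent family, which is exactly what makes Lemma \ref{inv} applicable. I then feed into Lemma \ref{inv} the two linearly independent families $(w_1,\dots,w_m)$ and $(x_1,\dots,x_n)$, with target families $(By_1,\dots,By_m)$ and $(z_1,\dots,z_n)$ respectively, and a tolerance $\epsilon':=\epsilon/(2(1+\|X\|))$. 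The lemma returns an invertible $V$ with $\|Vw_j-By_j\|<\epsilon'$ and $\|V^{-1}x_i-z_i\|<\epsilon'$.

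A short triangle-inequality check closes the argument: from $\|Vw_j-By_j\|<\epsilon'<\epsilon$ one obtains $\|VYy_j-By_j\|<\epsilon$ directly, while from $\|V^{-1}x_i-z_i\|<\epsilon'$ I get $\|XV^{-1}x_i-Xz_i\|\le\|X\|\epsilon'$, which combined with $\|Xz_i-Ax_i\|<\epsilon/2$ gives $\|XV^{-1}x_i-Ax_i\|<\epsilon$ by the choice of $\epsilon'$.

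The main conceptual obstacle is really matching the roles of $V$ and $V^{-1}$ to the two hypotheses: the density of the range of $X$ is used on the $V^{-1}$-side (to prescribe where $V^{-1}$ sends the $x_i$, up to $\ker X$), while the injectivity of $Y$ is used on the $V$-side (to guarantee that the family the $V$-constraint acts on is still linearly independent, so Lemma \ref{inv} applies). Once this pairing is set up correctly, everything else is routine estimation.
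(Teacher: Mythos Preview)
Your proof is correct and follows essentially the same route as the paper's: use density of $X\mh$ to choose preimage approximants $z_i$, use injectivity of $Y$ to obtain the linearly independent family $w_j=Yy_j$, then apply Lemma~\ref{inv} to produce $V$ and close with the triangle inequality. The only differences are cosmetic (your variable names swap the roles of the paper's $x_i$ and $y_j$, and your tolerance $\epsilon/(2(1+\|X\|))$ is chosen slightly differently from the paper's $\delta<\epsilon/(2\|X\|)$).
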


\begin{proof}
Let a system $(x_1,...,x_n)$, $(y_1,...,y_m)$,
$(x_1^{\prime},...,x_n^{\prime})$,
$(y_1^{\prime},...,y_m^{\prime})$ and  $\epsilon>0$ be given as
above. The system $\tilde{x}_i = Yx_i$ is linearly independent
since $\ker {Y} = 0$. Since $X\mh$ is dense, there are $z_j$ with
$\|Xz_j - y_j^{\prime}\| < \epsilon/2$. Take $0<\delta <
\frac{\epsilon}{2||X||}$ and choose an invertible operator $V$ as
in Lemma \ref{inv} for the system $(\tilde{x}_1,...,\tilde{x}_n)$,
$(y_1,...,y_m)$, $(x_1^{\prime},...,x_n^{\prime})$,
$(z_1,...,z_m)$ and  $\delta$. The obtained inequalities imply
that $\Gamma_{X,Y}$ is \SOT-dense in $\ml(\mh)\times \ml(\mh)$.
\end{proof}

Any solution $(X,Y)$ of the equation $XY = B$ will be called a {\it factorization} of an operator $B$.

\begin{proposition}\label{factor}
For each operator $B$ in an infinite-dimensional Hilbert space
$\mh$, the set $\mf(B)$ of all its factorizations is \SOT-dense in
$\ml(\mh)\times \ml(\mh)$.
\end{proposition}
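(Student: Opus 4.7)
The plan is to reduce the proposition to Lemma \ref{inv2}. Suppose we can produce a single factorization $B = X_0 Y_0$ in which $X_0$ has dense range and $Y_0$ has trivial kernel. The family
$$\Gamma_{X_0,Y_0}=\{(X_0V^{-1},VY_0):~V\in\mathcal{G}(\mh)\}$$
is then contained in $\mf(B)$, since $(X_0V^{-1})(VY_0)=X_0Y_0=B$. Lemma \ref{inv2} guarantees that $\Gamma_{X_0,Y_0}$ is already \SOT-dense in $\ml(\mh)\times\ml(\mh)$, so the larger set $\mf(B)$ is \SOT-dense as well.

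It remains to construct the seed factorization. Decompose $\mh=\mh_1\oplus\mh_2$ with both $\mh_1$ and $\mh_2$ infinite-dimensional and separable, and fix an isometric isomorphism $J:\mh\to\mh_1$. Define $Y_0\in\ml(\mh)$ to be $J$ followed by the inclusion $\mh_1\hookrightarrow\mh$; as an isometry, $Y_0$ is injective. Define $X_0\in\ml(\mh)$ by setting $X_0|_{\mh_1}=BJ^{-1}$, where $J^{-1}:\mh_1\to\mh$ is the inverse isometry of $J$, and by letting $X_0|_{\mh_2}$ be an arbitrary bounded operator from $\mh_2$ into $\mh$ with dense range (such operators exist because both spaces are separable and infinite-dimensional). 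Then $X_0(\mh)\supset X_0(\mh_2)$ is dense in $\mh$, and for every $x\in\mh$ we have $X_0Y_0x=X_0(Jx)=B(J^{-1}Jx)=Bx$, so $X_0Y_0=B$, as required.

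The only substantive point is the simultaneous arrangement of dense range for $X_0$ and injectivity for $Y_0$ in one factorization of $B$; reserving the $\mh_2$-summand as extra room into which $X_0$ can spread a dense image, while the $\mh_1$-summand carries the actual factorization of $B$, is exactly what makes this possible. Everything else follows cleanly from Lemma \ref{inv2}, so no deeper obstacle remains.
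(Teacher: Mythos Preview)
Your proof is correct and follows essentially the same route as the paper's: construct one factorization $B=X_0Y_0$ with $Y_0$ an isometry onto a summand $\mh_1$ of $\mh=\mh_1\oplus\mh_2$ and with $X_0$ having dense range, then invoke Lemma~\ref{inv2}. The only cosmetic difference is that the paper takes $X_0|_{\mh_2}$ to be the adjoint of an isometry with range $\mh_2$ (hence surjective, not merely dense range), while you allow any dense-range operator on $\mh_2$; since Lemma~\ref{inv2} only needs dense range, both choices work.
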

\begin{proof}
Let $\mh = \mh_1\oplus\mh_2$ where $\mh_1$ and $\mh_2$ are of the
same dimension as $\mh$. Let $U_1$ and $U_2$ be isometries with
the ranges $\mh_1$ and $\mh_2$, respectively. Then $U_1^*$ and
$U_2^*$ isometrically map $\mh_1$ and $\mh_2$, respectively, onto
$\mh$, also $U_1^*\mh_2 = \{0\}$ and $U_2^*\mh_1 = \{0\}$. We set
$Y = U_1$ and $X = BU_1^* + U_2^*$.

Since $XY = BU_1^*U_1 + U_2^*U_1 = B$, we have $(X,Y)\in \mf(B)$,
and therefore $(XV^{-1},VY) \in \mf(B)$ for each $V\in
\mathcal{G}(H)$. It follows easily from the definition of
operators $X,Y$ that $X\mh = \mh$ and $\ker(Y) = 0$. Applying
Lemma \ref{inv2} we conclude that $\mf(B)$ is SOT-dense in
$\ml(\mh)\times \ml(\mh)$.
\end{proof}

Let us write $A\succ B$  if the set $\ms(A,B)$ of all  solutions of (\ref{bilin}) is \SOT-dense in
$\ml(\mh)\times \ml(\mh)$. For brevity, we will denote by $\overline{\me}^s$ the closure of a subset $\me$ of
$\ml(\mh)\times \ml(\mh)$ with respect to the product of SOT-topologies.

\begin{lemma}\label{trans}
If $A\succ B$ and $B\succ  C$, then $A\succ C$.
\end{lemma}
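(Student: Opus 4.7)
The plan rests on the composition identity
\[(X'X)\,A\,(YY') = X'(XAY)Y' = X'BY' = C,\]
which shows that whenever $(X,Y) \in \ms(A,B)$ and $(X',Y') \in \ms(B,C)$, the pair $(X'X, YY')$ belongs to $\ms(A,C)$. Thus it suffices to prove that the set of all such composite pairs is \SOT-dense in $\ml(\mh) \times \ml(\mh)$.

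Fix a target $(Z,W)$ and a basic \SOT-neighborhood specified by vectors $v_1, \ldots, v_n \in \mh$ and $\ep > 0$. I would produce $(X, Y, X', Y')$ in two sequential stages, introducing auxiliary vectors to decouple the two applications of \SOT-density. In the first stage, choose linearly independent auxiliary vectors $f_1, \ldots, f_n, q_1, \ldots, q_n \in \mh$ and let $X_*, Y_* \in \ml(\mh)$ be any operators with $X_* v_i = f_i$ and $Y_* q_i = Wv_i$. Applying the \SOT-density of $\ms(A,B)$ (with vectors $\{v_i\}$ controlling the first coordinate and $\{q_i\}$ the second), I obtain $(X,Y) \in \ms(A,B)$ with $\|Y q_i - Wv_i\| < \ep/2$ and with $\{X v_i\}$ linearly independent---the latter being an open condition satisfied once $X$ approximates $X_*$ at the $v_i$ sufficiently finely.

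In the second stage, set $\tilde p_i := X v_i$ and $M := \|Y\|$; the crucial point is that $M$ is a known finite constant once $(X,Y)$ has been fixed. Choose target operators $X'_*, Y'_* \in \ml(\mh)$ with $X'_* \tilde p_i = Zv_i$ and $Y'_* v_i = q_i$. Applying the \SOT-density of $\ms(B,C)$ (with vectors $\{\tilde p_i\}$ for the first coordinate and $\{v_i\}$ for the second), I obtain $(X',Y') \in \ms(B,C)$ with $\|X' \tilde p_i - Zv_i\| < \ep/2$ and $\|Y' v_i - q_i\| < \ep/(2(M+1))$. A short calculation then gives
$\|X'X v_i - Zv_i\| = \|X'\tilde p_i - Zv_i\| < \ep/2$ and
\[\|YY' v_i - Wv_i\| \le \|Y\|\,\|Y' v_i - q_i\| + \|Y q_i - Wv_i\| < \ep,\]
placing $(X'X, YY')$ in the prescribed \SOT-neighborhood of $(Z,W)$.

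The main obstacle I anticipate is the failure of joint \SOT-continuity of operator composition: one cannot simply approximate $(X,Y)$ and $(X',Y')$ separately in \SOT and expect the products to behave well, since the error in $YY'$ propagates through the unbounded factor $\|Y\|$. The remedy is precisely the sequential ordering above: $(X,Y)$ is produced first, which turns $\|Y\|$ into a known constant against which the precision of the second-stage approximation of $Y'$ at the $v_i$ can be calibrated. The auxiliary vectors $f_i, q_i, \tilde p_i$ have no intrinsic role beyond ensuring that each invocation of \SOT-density refers only to data already available at that moment.
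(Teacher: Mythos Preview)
Your argument is correct (after the routine reduction to linearly independent $v_i$, so that operators $X_*,Y_*,X'_*,Y'_*$ with the prescribed values exist), but the paper's proof is much shorter and sidesteps the bookkeeping entirely. The paper also starts from the composition identity putting $(X'X,YY')\in\ms(A,C)$ whenever $(X,Y)\in\ms(A,B)$ and $(X',Y')\in\ms(B,C)$, but instead of building a two-stage $\ep$-approximation it simply \emph{fixes} $(X,Y)\in\ms(A,B)$ and lets $(X',Y')\to(I,I)$ along a net in $\ms(B,C)$. Then $(X'X,YY')\to(X,Y)$ in \SOT: the first coordinate because $X'\to I$ in \SOT\ applied to each vector $Xv$, and the second because left multiplication by the fixed bounded operator $Y$ is \SOT-continuous. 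This yields $\ms(A,B)\subset\overline{\ms(A,C)}^s$, and the assumed density of $\ms(A,B)$ finishes the proof. Specializing the inner pair to the target $(I,I)$ dissolves exactly the joint-continuity obstacle you work around by hand: once $(X,Y)$ is frozen, $\|Y\|$ is automatically a fixed constant and no calibration is needed. Your route has the merit of being fully explicit at the level of basic \SOT-neighborhoods; the paper's buys a three-line proof by exploiting that composition \emph{is} \SOT-continuous when one factor is held fixed.
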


\begin{proof} For each $(X,Y)\in  \ms(A,B)$ and each $(X_1,Y_1)\in \ms(B,C)$,
one has $(XX_1,Y_1Y)\in \ms(A,C)$. Taking $(X_1,Y_1)\to (I,I)$ we get that $(X,Y)\in \overline{\ms(A,C)}^s$.
Hence $\ml(\mh)\times \ml(\mh)\subset\overline{\ms(A,C)}^s$ and $A\succ C$.
\end{proof}

We proved in Proposition \ref{factor} that $I\succ C$ for all $C$.
So our aim is to show that $A\succ I$ for each non-compact $A$.

\begin{lemma}\label{proj}
If $P$ is a projection of infinite rank, then $P\succ I$.
\end{lemma}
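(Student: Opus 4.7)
The plan is to reduce the statement to Proposition \ref{factor} via an isometry adapted to $P$. Since $P$ has infinite rank, the subspace $P\mh$ is infinite-dimensional and separable, hence isomorphic as a Hilbert space to $\mh$, and I would fix an isometry $V\in\ml(\mh)$ whose range is $P\mh$. Then $V^*V=I$, $VV^*=P$, $PV=V$, and $V^*P=V^*$.

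The key observation is that for every $(\tilde X,\tilde Y)\in\mf(I)$ and all $L,R\in\ml(\mh)$ the pair
\[
(X,Y)=\bigl(\tilde X V^* + L(I-P),\; V\tilde Y + (I-P)R\bigr)
\]
lies in $\ms(P,I)$: since $(I-P)P=P(I-P)=0$, we have $XP=\tilde X V^*$ and $PY=V\tilde Y$, so $XPY=\tilde X V^*V\tilde Y=\tilde X\tilde Y=I$. I would point out that the naive subfamily obtained by taking $L=R=0$ is not enough to be \SOT-dense, since it forces $Y$ to take values in $P\mh$; the terms $L(I-P)$ and $(I-P)R$ are added precisely to overcome this while remaining harmless in the product $XPY$.

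Given a target $(X_0,Y_0)\in\ml(\mh)\times\ml(\mh)$, test vectors $y_1,\dots,y_m$ and $x_1,\dots,x_n$, and $\ep>0$, I would choose finite-rank operators $L,R$ with $L(I-P)y_j=X_0(I-P)y_j$ and $Rx_i=(I-P)Y_0 x_i$ (define each on the appropriate finite-dimensional span and extend by zero). Using $V^*=V^*P$ and $P=VV^*$, the approximation conditions $\|Xy_j-X_0y_j\|<\ep$ and $\|Yx_i-Y_0x_i\|<\ep$ then reduce to
\[
\|\tilde X(V^*y_j)-X_0 Py_j\|<\ep \quad\text{and}\quad \|\tilde Y x_i-V^*Y_0 x_i\|<\ep,
\]
where I applied the isometry $V^*$ to both sides of the second inequality to cancel the leading $V$. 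These are \SOT-approximation conditions on $(\tilde X,\tilde Y)\in\mf(I)$ against prescribed targets, so by Proposition \ref{factor} (\SOT-density of $\mf(I)$) a suitable $(\tilde X,\tilde Y)$ exists, completing the argument.

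I do not foresee a serious obstacle. The only conceptual point is recognizing the right augmented parametrization of $\ms(P,I)$: the naive family $(\tilde X V^*,V\tilde Y)$ is too narrow, while adding $L(I-P)$ and $(I-P)R$ costs nothing in the equation $XPY=I$ and supplies exactly the freedom needed to match the $(I-P)$-components of the prescribed data.
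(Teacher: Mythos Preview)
Your proof is correct and follows essentially the same approach as the paper: both pick an isometry $V$ with $VV^*=P$, build elements of $\ms(P,I)$ from it, and then augment by terms $L(I-P)$ and $(I-P)R$ (the paper's $X(I-P)$ and $(I-P)Y$) to reach arbitrary targets. The only difference is organizational: you invoke Proposition~\ref{factor} (density of $\mf(I)$) as a black box and approximate in one step, whereas the paper first uses Lemma~\ref{inv2} to place $(XP,PY)$ in $\overline{\ms(P,I)}^s$ and then adds the $(I-P)$-corrections via a net argument.
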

\begin{proof}
Let $U$ be an isometry with $UU^* = P$. Then $(U^*,U)\in \ms(P,I)$. Hence \[(VU^*,UV^{-1})\in\ms(P,I)\hbox{ for
each }V\in \mathcal{G}(\mh).\] It follows that $\overline{\ms(P,I)}^s$ contains all pairs $(M,N)$ with
$N\mh\subset P\mh$, $M(I-P) =0$.

Hence for each $(X,Y)\in \ml(\mh)\times \ml(\mh)$, the pair
$(XP,PY)$ belongs to  $\overline{\ms(P,I)}^s$. Choose a net
$(X_{\lambda},Y_{\lambda})$ in $\ms(P,I)$ with
$(X_{\lambda},Y_{\lambda})\to (XP,PY)$ in SOT, then
$(X_{\lambda}+X(I-P),Y_{\lambda}+(I-P)Y)\in \ms(P,I)$ (indeed
$(X_{\lambda}+X(I-P))P(Y_{\lambda}+(I-P)Y) =
X_{\lambda}PY_{\lambda} = 1$). Since
$(X_{\lambda}+X(I-P),Y_{\lambda}+(I-P)Y) \to (X,Y)$  we get that
$(X,Y)\in \overline{\ms(P,I)}^s$.
\end{proof}

The proof of the next lemma is immediate.

\begin{lemma}\label{cond}
{\rm (i)} If $(X,Y)\in \ms(F_1AF_2,I)$, then $(XF_1,F_2Y)\in
\ms(A,I)$.

In particular

{\rm (ii)} If $F_1AF_2\succ  I$, $\ker(F_1) = 0$ and $\overline{F_2\mh} = \mh$ then $A\succ I$.
\end{lemma}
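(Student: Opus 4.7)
The plan is to handle (i) by direct substitution and (ii) by reducing an arbitrary SOT-approximation problem for $\ms(A,I)$ to one for $\ms(F_1AF_2,I)$ using the hypotheses on $F_1$ and $F_2$.

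For (i), if $X(F_1AF_2)Y = I$, then associativity rewrites this as $(XF_1)A(F_2Y) = I$, so $(XF_1,F_2Y) \in \ms(A,I)$. No further work is needed.

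For (ii), fix an arbitrary $(X_0,Y_0) \in \ml(\mh)\times\ml(\mh)$ and a basic SOT-neighborhood of it determined by vectors $u_1,\dots,u_p$, $v_1,\dots,v_q$ and $\epsilon > 0$. My strategy is to construct an auxiliary pair $(\widetilde X,\widetilde Y)$ such that $\widetilde X F_1 u_i = X_0 u_i$ exactly and $F_2 \widetilde Y v_j$ is within $\epsilon/2$ of $Y_0 v_j$, then use $F_1 A F_2 \succ I$ to SOT-approximate $(\widetilde X,\widetilde Y)$ by elements of $\ms(F_1AF_2,I)$, and finally transfer back via (i). To build $\widetilde X$, I use $\ker(F_1) = 0$: the vectors $\{F_1 u_i\}$ carry all information about $\{u_i\}$, so the assignment $F_1 u_i \mapsto X_0 u_i$ is a well-defined linear map on $\lin\{F_1 u_i\}$ which extends arbitrarily to an operator $\widetilde X \in \ml(\mh)$. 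To build $\widetilde Y$, I use $\overline{F_2\mh}=\mh$: for each $j$ pick $w_j \in \mh$ with $\|F_2 w_j - Y_0 v_j\| < \epsilon/2$, and define $\widetilde Y$ on $\lin\{v_j\}$ by $\widetilde Y v_j = w_j$ (after first replacing $\{v_j\}$ by a linearly independent subfamily if needed), extended arbitrarily to $\mh$.

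Now the hypothesis $F_1AF_2 \succ I$ yields a net $(X_\lambda,Y_\lambda) \in \ms(F_1AF_2,I)$ with $(X_\lambda,Y_\lambda) \to (\widetilde X,\widetilde Y)$ in SOT. Since right multiplication by $F_1$ and left multiplication by $F_2$ are SOT-continuous, $X_\lambda F_1 u_i \to \widetilde X F_1 u_i = X_0 u_i$ and $F_2 Y_\lambda v_j \to F_2 \widetilde Y v_j$, the latter being within $\epsilon/2$ of $Y_0 v_j$. Hence for $\lambda$ large, $\|X_\lambda F_1 u_i - X_0 u_i\| < \epsilon$ and $\|F_2 Y_\lambda v_j - Y_0 v_j\| < \epsilon$, and by (i) the pair $(X_\lambda F_1, F_2 Y_\lambda)$ lies in $\ms(A,I)$. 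This places an element of $\ms(A,I)$ in the chosen SOT-neighborhood of $(X_0,Y_0)$, proving $A \succ I$.

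There is no genuine obstacle here; the only mildly delicate point is producing $\widetilde X$ and $\widetilde Y$ on the prescribed finite sets, which is exactly where the two hypotheses $\ker F_1 = 0$ and $\overline{F_2\mh}=\mh$ enter — if either failed, one of the two finite-dimensional interpolation problems could be inconsistent.
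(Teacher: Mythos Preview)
Your proof is correct. The paper gives no argument beyond the phrase ``The proof of the next lemma is immediate,'' and what you have written is precisely the natural elaboration: part (i) is indeed just associativity, and your argument for (ii)---pushing the image of $\ms(F_1AF_2,I)$ through the SOT-continuous map $(X,Y)\mapsto(XF_1,F_2Y)$ and using $\ker F_1=0$, $\overline{F_2\mh}=\mh$ to see that $\ml(\mh)F_1\times F_2\ml(\mh)$ is SOT-dense---is exactly the ``immediate'' reasoning the authors had in mind.
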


\begin{lemma}\label{dirsum}
Let $A = 0\oplus A_1$, where $A_1$ acts on infinite-dimensional
space and is invertible. Then $A\succ I$.
\end{lemma}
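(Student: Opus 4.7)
The plan is to reduce this to Lemma \ref{proj} by sandwiching $A$ between two well-chosen operators, then apply Lemma \ref{cond}(ii). Write the decomposition $\mh = \mh_0 \oplus \mh_1$ explicitly as the one in which $A = 0 \oplus A_1$, with $\mh_1$ infinite-dimensional and $A_1 \in \ml(\mh_1)$ invertible.

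My first step is to cancel $A_1$ on the right. Set $F_1 = I$ and $F_2 = I_{\mh_0} \oplus A_1^{-1}$. Since $A_1^{-1}$ exists as a bounded operator, $F_2 \in \ml(\mh)$, and $F_2$ is itself invertible on $\mh$; in particular $\overline{F_2\mh} = \mh$. Also trivially $\ker(F_1) = \{0\}$. A direct block computation gives
\[
F_1 A F_2 \;=\; (0 \oplus A_1)(I_{\mh_0} \oplus A_1^{-1}) \;=\; 0 \oplus I_{\mh_1} \;=\; P,
\]
where $P$ is the orthogonal projection of $\mh$ onto $\mh_1$. Because $\mh_1$ is infinite-dimensional, $P$ is a projection of infinite rank.

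Now Lemma \ref{proj} yields $P \succ I$, i.e.\ $F_1 A F_2 \succ I$. Since the hypotheses $\ker(F_1) = 0$ and $\overline{F_2 \mh} = \mh$ are satisfied, Lemma \ref{cond}(ii) applies and gives $A \succ I$, as required.

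The only real content is choosing $F_2$ so that $A F_2$ collapses to the natural infinite-rank projection onto $\mh_1$; everything else is bookkeeping with the lemmas already proved. There is no genuine obstacle here — the invertibility of $A_1$ is exactly what makes the right-hand cancellation legal, and the hypotheses of Lemma \ref{cond}(ii) are built for precisely this situation.
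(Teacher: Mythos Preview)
Your proof is correct and essentially identical to the paper's: both use the invertible operator $I\oplus A_1^{-1}$ to convert $A$ into the infinite-rank projection $0\oplus I_{\mh_1}$, then invoke Lemma~\ref{proj} followed by Lemma~\ref{cond}(ii). The only cosmetic difference is that you multiply by this operator on the right ($F_2$) while the paper multiplies on the left ($F_1 = F$, with $FA$ the projection); both choices yield the same projection and satisfy the hypotheses of Lemma~\ref{cond}(ii).
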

\begin{proof}
Let $F = I\oplus A_1^{-1}$ then $F$ is invertible and $FA$ is a projection of infinite rank. Hence $FA\succ 1$,
by Lemma \ref{proj}. Using Lemma \ref{cond} (ii), we get that $A\succ 1$.
\end{proof}

\begin{lemma}\label{posit}
If $A\ge 0$ and $A$ is not compact, then $A\succ I$.
\end{lemma}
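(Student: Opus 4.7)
The plan is to apply Lemma \ref{trans} via the spectral projection of $A$, reducing the general case to the projection case handled by Lemma \ref{proj}. Since $A \ge 0$ is non-compact, the spectral theorem yields an $\epsilon > 0$ for which the spectral projection $P := E_A[\epsilon, \|A\|]$ has infinite rank (otherwise every such projection would be finite rank and $A$ would be a norm limit of finite-rank operators, contradicting non-compactness). Because $P$ commutes with $A$, we have $A = A_1 \oplus A_2$ with respect to $\mh = P\mh \oplus (I-P)\mh$, where $A_1 \ge \epsilon I$ is invertible on the infinite-dimensional space $P\mh$ and $A_2 \ge 0$. By Lemma \ref{proj}, $P \succ I$, so by Lemma \ref{trans} it suffices to establish $A \succ P$.

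To prove $A \succ P$, I would construct, for arbitrary $X', Y' \in \ml(\mh)$, finite $\{u_i\}_{i=1}^n, \{v_j\}_{j=1}^m \subset \mh$ and $\delta > 0$, an $(X, Y) \in \ms(A, P)$ with $\|Y v_j - Y' v_j\| < \delta$ and $X u_i = X' u_i$. The construction uses the invertibility of $A_1$ to route the identity on $P\mh$ through $A$, leaving free parameters to absorb the $(I-P)\mh$-components of the target. Concretely, pick a bounded isomorphism $W: \mh \to P\mh$ with $Wv_j = A_1(PY'v_j) + \eta_j$ for small generic $\eta_j \in P\mh$; set $Y_P := A_1^{-1} W$, and choose $Y_Q: \mh \to (I-P)\mh$ of finite rank with $Y_Q v_j = (I-P) Y' v_j + \xi_j$ for small $\xi_j$ and $Y_Q = 0$ on $\lin\{v_j\}^\perp$. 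Let $Y := Y_P + Y_Q$, set $M := PW^{-1}$, and define $Z: (I-P)\mh \to \mh$ by prescribing $Z((I-P)u_i) := X' u_i - MP u_i$ on $\lin\{(I-P) u_i\}$ and $Z = 0$ on $\lin\{A_2 Y_Q v_j\}$, extended by zero. With $X := MP + Z(I-P)$, a direct block computation yields $XAY = MA_1 Y_P + ZA_2 Y_Q = (PW^{-1})W + 0 = P$; the cross-terms vanish because $PA_2 Y_Q = 0$ (the image of $A_2 Y_Q$ lies in $(I-P)\mh$) and $(I-P)A_1 Y_P = 0$ (the image of $A_1 Y_P$ lies in $P\mh$).

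The main obstacle is the consistency of the two prescriptions defining $Z$: if the finite-dimensional subspaces $\lin\{(I-P) u_i\}$ and $\lin\{A_2 Y_Q v_j\}$ meet non-trivially, the values of $Z$ on the intersection must agree, which in general forces a compatibility relation between $X'$, $M$, and $A$ that need not hold. Generic small perturbations $\xi_j$ in the definition of $Y_Q v_j$ move $\lin\{A_2 Y_Q v_j\}$ so that it becomes transverse to $\lin\{(I-P) u_i\}$, eliminating the constraint; similarly, generic $\eta_j$ guarantee that $\{W v_j\}$ is linearly independent, so that $W$ extends to a bounded isomorphism $\mh \to P\mh$. Both families of perturbations can be taken arbitrarily small while preserving the bound $\|Y v_j - Y' v_j\| < \delta$. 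This yields $A \succ P$, whence $A \succ I$ by Lemma \ref{trans} together with $P \succ I$.
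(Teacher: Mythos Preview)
Your reduction to proving $A\succ P$ for a single spectral projection $P$ of infinite rank, followed by Lemma~\ref{proj} and Lemma~\ref{trans}, is a reasonable strategy, but the explicit construction of $(X,Y)\in\ms(A,P)$ has a gap that your ``main obstacle'' paragraph does not address. With $X=MP+Z(I-P)$ and $M=PW^{-1}$ you force $X|_{P\mh}$ to have range contained in $P\mh$. Consequently, if some test vector $u_i$ lies in $P\mh$ while $X'u_i\notin P\mh$, then
\[
\|Xu_i-X'u_i\|\;\ge\;\|(I-P)X'u_i\|\;>\;0
\]
regardless of your choices of $W$, $\eta_j$, $\xi_j$, and $Z$; the equality $Xu_i=X'u_i$ you claim is simply impossible, and even $\delta$-closeness fails. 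The requirement $M A_1 Y_P=P$ together with the invertibility of $W$ pins down $M|_{P\mh}=PW^{-1}$, and your ansatz leaves no room to correct the $(I-P)$-component of $Xu_i$ for $u_i\in P\mh$. (There is a related well-definedness issue for $Z$ whenever the vectors $(I-P)u_i$ are linearly dependent, which your genericity argument also does not touch since the $u_i$ are given, not chosen.) The transversality problem you discuss is genuine but secondary; the structural restriction on $X|_{P\mh}$ is the real obstruction.

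The paper avoids any such direct construction. For each $\varepsilon>0$ one has $P_\varepsilon A=0\oplus B$ with $B$ invertible on an infinite-dimensional summand (for $\varepsilon$ small), so Lemma~\ref{dirsum} gives $P_\varepsilon A\succ I$. Lemma~\ref{cond}(i) then turns every $(X,Y)\in\ms(P_\varepsilon A,I)$ into $(XP_\varepsilon,Y)\in\ms(A,I)$, yielding $\ml(\mh)P_\varepsilon\times\ml(\mh)\subset\overline{\ms(A,I)}^s$. Letting $\varepsilon\to 0$ gives $P_\varepsilon\to I$ in \SOT, hence $X'P_\varepsilon\to X'$ for every $X'$, and density follows. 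The limiting argument in $\varepsilon$ is exactly the device your approach lacks: it is what absorbs the part of $X'$ that maps $P\mh$ outside $P\mh$.
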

\begin{proof}
For each ${\varepsilon}>0$, let $P_{\varepsilon} = I - Q$, where $Q$ is the spectral projection of $A$
corresponding to the interval $(0,{\varepsilon})$. Then $P_{\varepsilon}A$ is of the form $0\oplus B$, where $B$
is invertible and, for sufficiently small ${\varepsilon}$, non-compact. Hence $P_{\varepsilon}A\succ I$. By
Lemma \ref{cond}, $\ml(\mh)P_{\varepsilon}\times \ml(\mh) \subset \overline{\ms(A,I)}^s$. Since
$P_{\varepsilon}\to I$ when ${\varepsilon}\to 0$, we get that $A\succ I$.
\end{proof}

\begin{theorem}\label{noncomp}
If $A$ is non-compact, then the set of all solutions of the
equation \eqref{bilin} is \SOT-dense in $\ml(\mh)\times \ml(\mh)$
for each $B$.
\end{theorem}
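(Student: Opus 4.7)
By Proposition~\ref{factor}, $I\succ B$ for every $B\in\ml(\mh)$; combined with Lemma~\ref{trans}, this reduces the theorem to showing $A\succ I$ for every non-compact $A$. This reduction is the whole point of the chain of lemmas leading up to Theorem~\ref{noncomp}, so the plan is to identify, for a general non-compact $A$, factors $F_1,F_2$ allowing Lemma~\ref{cond}(ii) to be invoked with a positive non-compact operator in the middle.

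Consider the polar decomposition $A=U|A|=|A^*|U$ in which $|A|=(A^*A)^{1/2}$, $|A^*|=(AA^*)^{1/2}$, and $U$ is the partial isometry with $\ker U=\ker A$ and $\overline{\im U}=R:=\overline{\im A}$. Since $T$ is compact if and only if $T^*T$ is compact, both $|A|$ and $|A^*|$ are positive and non-compact, so Lemma~\ref{posit} yields $|A|\succ I$ and $|A^*|\succ I$. The identities $U^*A=|A|$ and $AU^*=|A^*|$ invite an application of Lemma~\ref{cond}(ii) with $(F_1,F_2)=(U^*,I)$ or $(I,U^*)$, but the required hypotheses $\ker F_1=0$ and $\overline{F_2\mh}=\mh$ translate, respectively, into $\overline{\im A}=\mh$ and $\ker A=0$, which may fail.

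To circumvent this, choose any bounded $V\in\ml(\mh)$ with $V|_{(\ker A)^\perp}=0$ and $V(\ker A)\subset R^\perp$, and set $\tilde U=U+V$. A direct computation gives
\[\tilde U^*A=|A|\qquad\text{and}\qquad A\tilde U^*=|A^*|,\]
for any such $V$: the inclusion $\im A\subset R$ forces $V^*A=0$, and $\im V^*\subset\ker A$ forces $AV^*=0$. We then split on the relative size of $\dim\ker A$ and $\dim R^\perp$. If $\dim\ker A\ge\dim R^\perp$, pick $V|_{\ker A}\colon\ker A\to R^\perp$ to have dense range; then $\im\tilde U\supset R+V(\ker A)$ is dense in $\mh$, hence $\ker\tilde U^*=0$, and Lemma~\ref{cond}(ii) with $F_1=\tilde U^*$, $F_2=I$ converts $|A|\succ I$ into $A\succ I$. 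If $\dim\ker A\le\dim R^\perp$, pick $V|_{\ker A}$ injective; decomposing $x=x_0+x_1$ with $x_0\in\ker A$ and $x_1\in(\ker A)^\perp$, the orthogonal splitting $\tilde Ux=Ux_1+Vx_0\in R\oplus R^\perp$ shows that $\tilde U$ is injective, so $\overline{\im\tilde U^*}=\mh$, and Lemma~\ref{cond}(ii) with $F_1=I$, $F_2=\tilde U^*$ converts $|A^*|\succ I$ into $A\succ I$. Since every pair of cardinals in $\{0,1,\dots,\aleph_0\}$ is comparable, at least one case always applies.

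The main obstacle is the Fredholm-type asymmetry that $U^*$ can fail both injectivity and dense range simultaneously, so neither direct application of Lemma~\ref{cond}(ii) goes through; the perturbation $\tilde U=U+V$ uses the remaining freedom between the left and right sides of $A$, and the dichotomy on $\dim\ker A$ versus $\dim R^\perp$ guarantees that at least one of the two sides can be made to work.
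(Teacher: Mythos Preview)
Your argument is correct and follows essentially the same route as the paper. The paper simply asserts that the partial isometry $U$ in the polar decomposition $A=UT$ may be taken to be an isometry or a coisometry (implicitly using the comparability of $\dim\ker A$ and $\dim(\overline{\im A})^\perp$ in a separable space), and then applies Lemma~\ref{cond}(ii) with $F=U^*$ on the appropriate side; your $\tilde U=U+V$ and the explicit dimension dichotomy spell out exactly this extension step, and your identifications $\tilde U^*A=|A|$, $A\tilde U^*=|A^*|$ coincide with the paper's $U^*A=T$ and $AU^*=UTU^*$.
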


\begin{proof} It suffices to show that $A\succ I$. Suppose firstly that the operator $U$ in the polar
decomposition $A = UT$ of $A$ is an isometry. The operator $AU^* =
UTU^*$ is non-negative and non-compact. Hence $AU^*\succ  I$.
Since $\overline{U^*\mh}=\mh$, $A\succ I$.

If $U$ is a coisometry, then $U^*A = T$ is a positive non-compact
operator. So $T\succ I$, and since $\ker(U^*)= 0 $, we get $A\succ
I$.
\end{proof}

\section{$A$-expanding operators}\label{S:obst}

In operator theory, especially in dealing with interpolation problems, one often needs to consider Hilbert (or
Banach) spaces with two norms and study operators with special properties with respect to these norms.  The main
purpose of this section is to show that Kolmogorov $n$-widths can be used to describe \WOT-closures of some sets
of operators given by conditions of this kind. Our interest to such conditions is inspired by the theory of
linear fractional relations, see \cite{KOS06} and \cite{KOS07}.
\medskip

Let $\mx$ be a Banach space and $A\in\ml(\mx )$ be a compact operator with an infinite-dimensional range. It
determines a semi-norm $\|x\|_A = \|Ax\|$ on $\mx$. We consider the set $\me(A)$ of all operators $T$ that
increase this semi-norm: $\|Tx\|_A\ge \|x\|_A$ for each $x\in \mx$. In other words

\begin{equation}\label{E:theSet}\me(A):=\{T\in\ml(\mx ):~||ATx||\ge ||Ax||~\forall
x\in \mx \}.\end{equation}

It turns out that the problem of description of $\me(A)$ is a dual
version of the problem considered in previous sections: the
following dual characterization of $\me(A)$ relates it with
covering operators.

\begin{lemma}\label{L:iff} Let a Banach space $\mx$ be reflexive.
An operator $R\in\ml(\mx )$ satisfies $R\in\me(A)$ if and only if $R^*\in G(K)$, where $K=A^*(B_{\mx ^*})$.
\end{lemma}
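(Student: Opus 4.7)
The plan is to translate the condition $\|ARx\|\ge \|Ax\|$ into an inequality of support functionals and then use the Hahn--Banach separation theorem to turn that inequality into the set inclusion $R^*(K)\supset K$.

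First I would compute the two norms as support functionals of $K$. Writing $B=B_{\mx^*}$, one has
\[
\|Ax\|=\sup_{f\in B}|f(Ax)|=\sup_{f\in B}|(A^*f)(x)|=\sup_{y\in K}|y(x)|,
\]
and, substituting $Rx$ for $x$ and using $f(ARx)=(R^*A^*f)(x)$,
\[
\|ARx\|=\sup_{f\in B}|(R^*A^*f)(x)|=\sup_{z\in R^*(K)}|z(x)|.
\]
Thus the condition $R\in\me(A)$ is exactly
\begin{equation}\label{E:plan-support}
\sup_{z\in R^*(K)}|z(x)|\ \ge\ \sup_{y\in K}|y(x)|\qquad\text{for every }x\in\mx.
\end{equation}

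Next I would check that $K$ and $R^*(K)$ are absolutely convex and weakly closed subsets of $\mx^*$. Absolute convexity is immediate from that of $B$. For weak-closedness I use reflexivity of $\mx$: then $B=B_{\mx^*}$ is weakly compact, and the operators $A^*,R^*:\mx^*\to\mx^*$ are weak-to-weak continuous (being adjoints of bounded operators), so $K=A^*(B)$ and $R^*(K)=R^*A^*(B)$ are weakly compact, hence weakly closed.

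The ``if'' direction of the lemma is trivial: $R^*(K)\supset K$ immediately gives \eqref{E:plan-support}. For the ``only if'' direction I would argue by contradiction. Assume some $y_0\in K$ is not in $R^*(K)$. Because $R^*(K)$ is absolutely convex and weakly closed in $\mx^*$, the Hahn--Banach separation theorem (applied in $(\mx^*,\text{weak})$, whose continuous dual is $\mx$ by reflexivity) produces $x\in\mx$ and $\alpha>0$ such that $|z(x)|\le\alpha$ for all $z\in R^*(K)$ while $|y_0(x)|>\alpha$. Then
\[
\sup_{z\in R^*(K)}|z(x)|\ \le\ \alpha\ <\ |y_0(x)|\ \le\ \sup_{y\in K}|y(x)|,
\]
contradicting \eqref{E:plan-support}. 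Hence $R^*(K)\supset K$, i.e.\ $R^*\in G(K)$.

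The main obstacle, and the only place reflexivity enters, is ensuring weak-closedness of $R^*(K)$ so that Hahn--Banach separation in $\mx^*$ uses functionals coming from $\mx$ itself (rather than $\mx^{**}$); without that, the separating functional need not be evaluation at a point of $\mx$, and the equivalence could fail.
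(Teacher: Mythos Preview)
Your proof is correct and follows essentially the same route as the paper's: both express $\|Ax\|$ and $\|ARx\|$ as support functionals of $K$ and $R^*(K)$, then use Hahn--Banach separation together with reflexivity to obtain the separating vector in $\mx$ itself. You supply more detail than the paper does on why $R^*(K)$ is weakly closed (via weak compactness of $B_{\mx^*}$ and weak-to-weak continuity of adjoints), but the argument is the same.
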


\begin{proof}
 Assume that $R^*\in G(K)$, that is,
$R^*K\supset K$. Then
\begin{equation}\label{E:ATx_vs_Ax}||ARx||=\sup_{f\in
B_{\mx ^*}}|f(ARx)|=\sup_{f\in B_{\mx ^*}}|(R^*A^*f)(x)|\ge\sup_{f\in B_{\mx ^*}}|(A^*f)(x)|=||Ax||,
\end{equation}
for each $x\in X$. Thus $R\in \me(A)$.
\medskip

Conversely, if $R^*\notin G(K)$, then there is $f\in K\setminus
R^*K$. The set $R^*K$ is weakly closed. By the Hahn--Banach
theorem and reflexivity of $\mx$ there is $x\in \mx$ with $|f(x)|
> \sup_{g\in R^*K}|g(x)| = \|ARx\|$. Since $|f(x)| \le \|Ax\|$ we
obtain that $R\notin \me(A)$.
\end{proof}

We denote the \WOT-closure of $\me(A)$ by $\we(A)$.

\begin{corollary}
Let $\mx$ be a reflexive Banach space, $A$ an operator on $\mx$. Then $\{R^*: R\in \we(A)\}= WG(K)$, where
$K=A^*(B_{\mx ^*})$.
\end{corollary}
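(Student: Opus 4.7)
The plan is to deduce this corollary from Lemma~\ref{L:iff} by showing that, in the reflexive setting, the adjoint map commutes with \WOT-closure. First, by Lemma~\ref{L:iff} the set $\{R^*:R\in\me(A)\}$ is exactly $G(K)$. So everything reduces to verifying the identity
\[
\overline{\{R^*:R\in\me(A)\}}^{\WOT}\ =\ \{R^*:R\in\we(A)\},
\]
i.e.\ that the bijection $R\mapsto R^*$ between $\ml(\mx)$ and $\ml(\mx^*)$ intertwines the two \WOT-closures.

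The key step is the following observation: when $\mx$ is reflexive, the map $\Phi:\ml(\mx)\to\ml(\mx^*)$, $\Phi(R)=R^*$, is a \WOT-homeomorphism (onto its image, which is all of $\ml(\mx^*)$). A subbasic \WOT-neighborhood of $R_0\in\ml(\mx)$ has the form $\{R:|f((R-R_0)x)|<\ep\}$ for $x\in\mx$, $f\in\mx^*$, while a subbasic \WOT-neighborhood of $S_0\in\ml(\mx^*)$ has the form $\{S:|\varphi((S-S_0)f)|<\ep\}$ for $f\in\mx^*$, $\varphi\in\mx^{**}$. Reflexivity lets us identify $\varphi$ with some $x\in\mx$, and then for $S=R^*$, $S_0=R_0^*$ we have $\varphi((R^*-R_0^*)f)=((R^*-R_0^*)f)(x)=f((R-R_0)x)$. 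Thus subbasic neighborhoods on the two sides are identified through $\Phi$, so $\Phi$ is a \WOT-homeomorphism.

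Once this is established, the proof is immediate: applying $\Phi$ to $\me(A)$ yields $G(K)$ by Lemma~\ref{L:iff}, and applying a homeomorphism to closures gives
\[
\{R^*:R\in\we(A)\}\ =\ \Phi\bigl(\overline{\me(A)}^{\WOT}\bigr)\ =\ \overline{\Phi(\me(A))}^{\WOT}\ =\ \overline{G(K)}^{\WOT}\ =\ WG(K).
\]

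The whole argument is essentially a bookkeeping exercise; the only point that requires any attention is the \WOT-continuity of the adjoint map, where reflexivity is used in an essential way. Without reflexivity one only obtains continuity into the weak$^*$ operator topology on $\ml(\mx^*)$, which is generally weaker than \WOT, and the identity between the two closures would fail.
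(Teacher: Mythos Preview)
Your proof is correct and follows essentially the same approach as the paper: both invoke Lemma~\ref{L:iff} and the fact that, for reflexive $\mx$, the adjoint map $R\mapsto R^*$ is a \WOT-homeomorphism between $\ml(\mx)$ and $\ml(\mx^*)$, so closures are preserved. The paper simply asserts this bicontinuity as a standard fact, whereas you spell out the identification of subbasic neighborhoods explicitly; the added detail is fine and the reasoning is sound.
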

\begin{proof}
Since $\mx$ is reflexive the map $R\to R^*$ from $\ml(\mx) $ to
$\ml(\mx^*)$ is bicontinuous in the \WOT-topologies. Hence the
result follows from Lemma \ref{L:iff}.
\end{proof}

\begin{corollary}\label{T:NotEverything} Let $\mx$ be reflexive. If $A\in\ml(\mx )$ is such that the
sequence \[\{d_n(A^*(B_{\mx ^*}))\}_{n=0}^\infty\] is lacunary,
then $\we(A)$ is contained in the set of  all operators for which
$\ker A$ is an invariant subspace.
\end{corollary}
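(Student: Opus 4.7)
My plan is to derive this corollary quickly from the machinery already assembled, namely Corollary 7.2, Theorem 3.5, and a standard annihilator duality. The three ingredients fit together in a clean way and there is no real obstacle, so this will be a short plan.

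First, I would invoke Corollary 7.2 to transfer the question about $\we(A)$ to one about $WG(K)$: for $R\in\we(A)$, the adjoint $R^*$ lies in $WG(K)$, where $K = A^*(B_{\mx^*})$. Since by hypothesis $\{d_n(K)\}$ is lacunary and $K$ is absolutely convex and infinite-dimensional, Theorem 3.5 (the combination of Theorem 2.1 and Lemma 3.4) gives $WG(K) = \ma_K$. Thus $R^*$ leaves the closed subspace $V_K = \overline{\lin(K)}$ invariant.

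Second, I would identify $V_K$ and its pre-annihilator concretely. By definition $K = A^*(B_{\mx^*})$, so $\lin(K) = A^*\mx^*$ and hence $V_K = \overline{A^*\mx^*}$. The pre-annihilator of $V_K$ in $\mx$ is
\[
(V_K)_\bot = \{x\in\mx : (A^*f)(x) = 0 \text{ for all } f\in\mx^*\} = \{x\in\mx : f(Ax)=0 \text{ for all } f\in\mx^*\} = \ker A.
\]

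Third, I would invoke the standard duality between invariance of a subspace under an operator and invariance of its (pre-)annihilator under the adjoint: if $R^*V_K\subset V_K$, then for any $x\in\ker A = (V_K)_\bot$ and any $g\in V_K$ we have $g(Rx) = (R^*g)(x) = 0$, since $R^*g\in V_K$; hence $Rx\in (V_K)_\bot = \ker A$. (Reflexivity of $\mx$, which we already used implicitly in invoking Corollary 7.2, makes the passage between $R$ and $R^*$ bicontinuous and ensures the above pre-annihilator/annihilator identifications behave as expected.) This shows that every $R\in\we(A)$ leaves $\ker A$ invariant, which is the desired conclusion. The only point that needs a moment of care is checking that $V_K$ really equals $\overline{A^*\mx^*}$ and that the pre-annihilator computation gives $\ker A$; both are routine.
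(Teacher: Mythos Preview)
Your proof is correct and follows essentially the same route as the paper: pass to adjoints via the preceding corollary, use lacunarity of $\{d_n(K)\}$ to get $R^*\in\ma_K$, and then dualize invariance of $V_K=\overline{A^*\mx^*}$ to invariance of $\ker A=(V_K)_\bot$. The only cosmetic difference is that the paper cites Lemma~\ref{bystr} directly (giving $G(K)\subset\ma_K$, hence $WG(K)\subset\ma_K$ since $\ma_K$ is \WOT-closed), whereas you invoke the full equality $WG(K)=\ma_K$; only the inclusion is needed here.
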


\begin{proof} Follows immediately from Lemma \ref{bystr} if we take into account the observation
that $\ker A$ is an invariant subspace of $R$ if and only if
$\overline{A^*\mx ^*}$ is an invariant subspace of $R^*$ (that is,
if and only if $R^*\in \mathcal{A}_K$).
\end{proof}

Applying Theorem \ref{vklyuch}, we obtain the converse inclusion:
\begin{corollary}\label{T:invariant} The set of all operators preserving $\ker A$ is contained in
$\we(A)$. If $\ker A=\{0\}$, then $\we(A)=\ml(\mx)$.
\end{corollary}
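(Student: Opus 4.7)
The plan is to deduce this corollary by dualizing Theorem \ref{vklyuch} through the identification of $\we(A)$ with $WG(K)$ established earlier in this section, where $K=A^*(B_{\mx^*})$.

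First I would check that the hypotheses of Theorem \ref{vklyuch} apply to $K=A^*(B_{\mx^*})$. The set $K$ is absolutely convex as the image of an absolutely convex set under a linear operator. Since $\mx$ is reflexive, $B_{\mx^*}$ is weakly compact; as $A^*$ is compact (Schauder's theorem) and weak-to-norm continuous on bounded sets, $K$ is norm-compact. It is infinite-dimensional because $A$, and hence $A^*$, has infinite-dimensional range. Thus Theorem \ref{vklyuch} gives $\ma_K\subset WG(K)$.

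Next I would carry out the standard duality identification. Observe that $\lin(K)=A^*(\mx^*)$, so $V_K=\overline{A^*\mx^*}=(\ker A)^{\perp}$ by reflexivity. Consequently, for $T\in\ml(\mx)$,
\[T(\ker A)\subset\ker A\iff T^*(\ker A)^{\perp}\subset(\ker A)^{\perp}\iff T^*\in\ma_K.\]
Combining this with the corollary preceding the statement (which says $\{R^*:R\in\we(A)\}=WG(K)$), any $T$ preserving $\ker A$ satisfies $T^*\in\ma_K\subset WG(K)$, so $T\in\we(A)$. This gives the first assertion. The second assertion is immediate: if $\ker A=\{0\}$, then every operator vacuously preserves $\ker A$, and therefore $\we(A)=\ml(\mx)$.

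There is no substantive obstacle here beyond correctly assembling the duality: the only point to check carefully is that preserving $\ker A$ on $\mx$ is dual to preserving $V_K$ on $\mx^*$, which requires reflexivity and the annihilator identity $(\ker A)^{\perp}=\overline{A^*\mx^*}$. Once that bridge is in place, the result follows directly by invoking Theorem \ref{vklyuch}.
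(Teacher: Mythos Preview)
Your proposal is correct and follows exactly the approach the paper intends: the paper's ``proof'' is the single phrase ``Applying Theorem \ref{vklyuch}'', relying on the duality $\{R^*:R\in\we(A)\}=WG(K)$ and on the observation (already recorded in the proof of Corollary \ref{T:NotEverything}) that $R$ preserves $\ker A$ if and only if $R^*\in\ma_K$. You have simply unpacked these implicit steps, including the verification that $K=A^*(B_{\mx^*})$ meets the hypotheses of Theorem \ref{vklyuch} and the annihilator identity $V_K=(\ker A)^\perp$ under reflexivity.
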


Applying Theorem \ref{T:Hilbert+K}, we get

\begin{corollary}\label{T:Hilbert} If $\mx$ is a separable Hilbert
space and $A\in\ml(\mx)$ is such that the sequence $s$-numbers of
$A$ is not lacunary, then $\we(A)=\ml(\mx)$.
\end{corollary}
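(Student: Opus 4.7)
The plan is to reduce the assertion to Theorem \ref{T:Hilbert+K} via the duality-type Corollary stated just before this one, which identifies $\{R^*:R\in\we(A)\}$ with $WG(K)$, where $K=A^*(B_{\mx^*})$. Since a Hilbert space is reflexive and the adjoint map $R\mapsto R^*$ is a \WOT-homeomorphism onto $\ml(\mx^*)$, once we establish $WG(K)=\ml(\mx^*)$, the conclusion $\we(A)=\ml(\mx)$ will follow immediately.

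To apply Theorem \ref{T:Hilbert+K}, I first need to verify that $K=A^*(B_{\mx^*})$ is an ellipsoid in the Hilbert space $\mx^*$ (which, via the Riesz identification, is a separable Hilbert space). This is clear: $A^*$ is a bounded operator between Hilbert spaces, and it is compact with infinite-dimensional range because $A$ is (compactness and the rank of the range are preserved under taking adjoints). So $K$ falls into the class of sets treated in Section 4.

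Next I need to check the hypothesis on the widths, namely that $\{d_n(K)\}_{n=0}^\infty$ is non-lacunary. By the equality $d_n(A^*(B_{\mx^*}))=s_{n+1}(A^*)$ recorded at the start of Section 4 (from \cite[Theorem~2.2]{GK69}), together with the well-known fact that $s_n(A^*)=s_n(A)$ for every compact operator between Hilbert spaces, the sequence $\{d_n(K)\}_{n=0}^\infty$ is just the left shift of $\{s_n(A)\}_{n=1}^\infty$. A left shift of a sequence is lacunary iff the sequence itself is lacunary, so the assumption that $\{s_n(A)\}$ is not lacunary is exactly the non-lacunarity hypothesis of Theorem \ref{T:Hilbert+K} applied to $K$.

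Invoking Theorem \ref{T:Hilbert+K} yields $WG(K)=\ml(\mx^*)$. Combined with the Corollary $\{R^*:R\in\we(A)\}=WG(K)$ and the bijectivity of the adjoint on $\ml(\mx)$, this gives $\we(A)=\ml(\mx)$. The only mild point to watch is the correct handling of the Banach-space vs.\ Hilbert-space adjoint, but since the Riesz map is an isometric (conjugate-linear) identification, the set $A^*(B_{\mx^*})$ coincides up to this isometry with the image of the unit ball of $\mx$ under the Hilbert-space adjoint, which does not affect either its $n$-widths or its status as an ellipsoid. Thus no real obstacle remains beyond quoting the already-established results in the correct order.
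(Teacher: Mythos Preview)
Your proof is correct and follows exactly the route the paper intends: apply the duality corollary $\{R^*:R\in\we(A)\}=WG(K)$ together with Theorem \ref{T:Hilbert+K}, after noting $d_n(K)=s_{n+1}(A^*)=s_{n+1}(A)$. One minor quibble: the sequence $\{d_n(K)\}_{n\ge 0}=(s_1(A),s_2(A),\dots)$ is literally the same sequence as $\{s_n(A)\}_{n\ge 1}$, not a left shift of it, but this has no bearing on the argument.
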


We can summarize Hilbert-space-case results in the following way:

\begin{theorem}[A complete classification in the Hilbert space
case]\label{T:completeHilbert}  Let $\mx $ be a se\-parable
Hil\-bert space.
\smallskip

\noindent{\rm (i)} If the sequence of $s$-numbers of $A$ is not
lacunary, then $\we(A)=\ml(\mx)$.
\smallskip

\noindent{\rm (ii)} If the sequence of $s$-numbers of $A$ is
lacunary, then $\we(A)$ coincides with the set of operators for
which $\ker A$ is an invariant subspace.
\end{theorem}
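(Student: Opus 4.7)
The plan is to assemble this theorem entirely from the corollaries already proved in this section, after translating the lacunarity condition on $s$-numbers of $A$ into the lacunarity condition on the Kolmogorov widths of the ellipsoid $K = A^*(B_{\mx^*})$ that appears in Corollaries \ref{T:NotEverything}, \ref{T:invariant}, and \ref{T:Hilbert}. Part (i) is essentially immediate: it is literally the statement of Corollary \ref{T:Hilbert}, so no work is needed beyond quoting it.

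For part (ii) I would split into two inclusions. The inclusion of the operators preserving $\ker A$ inside $\we(A)$ is Corollary \ref{T:invariant}, independent of the lacunarity hypothesis. The reverse inclusion is where the hypothesis enters, via Corollary \ref{T:NotEverything}: I must check that lacunarity of $\{s_n(A)\}_{n=1}^\infty$ is equivalent to lacunarity of $\{d_n(A^*(B_{\mx^*}))\}_{n=0}^\infty$. In the Hilbert space setting one has $s_n(A) = s_n(A^*)$ (the Hilbert-space adjoint and Banach-space adjoint agree up to an antilinear isometry for these purposes), and by the identification $d_n(A^*(B_{\mx^*})) = s_{n+1}(A^*)$ recalled in Section 4.1, the two sequences differ only by a one-step left shift. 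Since lacunarity is defined by $\liminf a_{n+1}/a_n = 0$, it is preserved under left shifts, and hence the two sequences are lacunary simultaneously.

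Combining these facts, under the hypothesis of (ii) Corollary \ref{T:NotEverything} applies and gives $\we(A) \subset \{R : R(\ker A) \subset \ker A\}$, while Corollary \ref{T:invariant} gives the opposite containment; together they give the stated equality.

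The only mildly delicate point is the identification $s_n(A) = s_n(A^*)$ in the sense needed here, because the paper uses the Banach-space conjugate $A^*$ throughout Section 4; but this is a standard fact (the operators $(A^*A)^{1/2}$ and $(AA^*)^{1/2}$ have the same nonzero eigenvalues with the same multiplicities), so it causes no real obstacle. There is no main obstacle beyond correctly invoking the earlier corollaries; the theorem is essentially a summary statement of what has already been proved.
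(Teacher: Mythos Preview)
Your proposal is correct and matches the paper's own treatment: the paper presents Theorem~\ref{T:completeHilbert} explicitly as a summary of Corollaries~\ref{T:NotEverything}, \ref{T:invariant}, and~\ref{T:Hilbert}, with no separate proof given. Your added observation that lacunarity of $\{s_n(A)\}$ is equivalent to lacunarity of $\{d_n(A^*(B_{\mx^*}))\}$ via $s_n(A)=s_n(A^*)$ and the index shift $d_n=s_{n+1}$ is exactly the bridge needed to invoke Corollary~\ref{T:NotEverything}, and it is correct.
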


Finally, using Theorem \ref{T:ultra} we obtain a result on the
ultra-weak closure of $\we(A)$:

\begin{corollary} Let $A\in\ml(\mh)$ be such that its sequence of $s$-numbers is not lacunary. Then the
closure of the set {\rm (\ref{E:theSet})} in the ultra-weak
topology coincides with $\ml(\mh)$.
\end{corollary}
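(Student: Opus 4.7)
The plan is to reduce the corollary to Theorem \ref{T:ultra} via the adjoint correspondence of Lemma \ref{L:iff}. Since a Hilbert space is reflexive, Lemma \ref{L:iff} gives the bijection $R\in\me(A)\iff R^*\in G(K)$, where $K:=A^*(B_{\mh^*})$. The set $K$ is an ellipsoid in the Hilbert space $\mh^*$, and its Kolmogorov widths satisfy $d_n(K)=s_{n+1}(A^*)=s_{n+1}(A)$, so by hypothesis $\{d_n(K)\}_{n=0}^\infty$ is not lacunary. Theorem \ref{T:ultra} applied to $K\subset\mh^*$ then yields that $G(K)$ is ultra-weakly dense in $\ml(\mh^*)$.

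To finish, I would transport this density back to $\ml(\mh)$ through the adjoint map $\Phi\colon\ml(\mh)\to\ml(\mh^*)$, $R\mapsto R^*$. The key point is that $\Phi$ is a bijection which is a homeomorphism for the ultra-weak topologies on the two sides. This is built into the trace-class description $\ml(\mh)=C_1(\mh)^*$ and $\ml(\mh^*)=C_1(\mh^*)^*$: taking adjoints intertwines the two trace dualities, since for $S\in C_1(\mh^*)$ one has $S^*\in C_1(\mh)$ and the evaluation $R\mapsto\tr(R^*S)$ is related to $R\mapsto\tr(RS^*)$ by the standard trace identity. Hence any $\sigma$-weakly continuous functional on $\ml(\mh^*)$ pulls back along $\Phi$ to a $\sigma$-weakly continuous functional on $\ml(\mh)$, and the symmetric argument handles $\Phi^{-1}$.

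Granting the $\sigma$-weak bicontinuity of $\Phi$, the proof concludes quickly: since $\me(A)=\Phi^{-1}(G(K))$ and $\Phi$ is an ultra-weak homeomorphism, the ultra-weak closure of $\me(A)$ equals $\Phi^{-1}$ of the ultra-weak closure of $G(K)$, which is $\Phi^{-1}(\ml(\mh^*))=\ml(\mh)$. The only point that requires any care is this bicontinuity of the adjoint map in the $\sigma$-weak topology, which is folklore (see, e.g., \cite[Chapter II]{Tak79}); once it is in hand, the corollary is an immediate consequence of Theorem \ref{T:ultra} and Lemma \ref{L:iff}, with no new ingredients.
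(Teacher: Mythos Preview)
Your proof is correct and follows exactly the route the paper intends: the corollary is stated without proof, with only the remark ``using Theorem~\ref{T:ultra},'' and your argument supplies precisely the natural details---the adjoint correspondence of Lemma~\ref{L:iff}, the identification $d_n(K)=s_{n+1}(A^*)=s_{n+1}(A)$, and the transport of ultra-weak density via the bicontinuity of $R\mapsto R^*$ (the ultra-weak analogue of the \WOT-bicontinuity used in the corollary right after Lemma~\ref{L:iff}). There is nothing to add.
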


\end{large}
\end{document}